\documentclass[12pt]{amsart}
\usepackage{enumerate,amssymb,version,aliascnt,geometry,stmaryrd,mathrsfs,chngcntr,hyperref,calrsfs,mathtools}
\usepackage[all]{xy}
\usepackage[mathscr]{euscript}
\hypersetup{
pdfauthor={Olivier Haution},
pdftitle={},
pdfkeywords={},
hidelinks
}

\usepackage[T1]{fontenc}
\usepackage{textcomp}

\let\origsetminus\setminus
\let\originfty\infty
\let\origpartial\partial
\let\origin\in
\let\origprod\prod
\let\origsum\sum
\let\origsubset\subset
\let\origsimeq\simeq
\let\origast\ast
\let\origsum\sum
\usepackage{mathabx}
\let\setminus\origsetminus
\let\infty\originfty
\let\partial\origpartial
\let\in\origin
\let\prod\origprod
\let\subset\origsubset
\let\simeq\origsimeq
\let\ast\origast
\let\sum\origsum

\geometry{left=3cm,right=3cm,top=3cm,bottom=3cm}

\DeclareMathOperator{\Hh}{H}
\DeclareMathOperator{\GL}{GL}
\DeclareMathOperator{\SL}{SL}
\DeclareMathOperator{\Spec}{Spec}
\DeclareMathOperator{\Sp}{Sp}
\DeclareMathOperator{\Th}{Th}
\DeclareMathOperator{\SH}{SH}

\DeclareMathOperator{\id}{id}
\DeclareMathOperator{\Hom}{Hom}
\DeclareMathOperator{\Sm}{Sm}
\DeclareMathOperator{\Sch}{Sch}
\DeclareMathOperator{\Schq}{Schq}
\DeclareMathOperator{\KGL}{KGL}
\DeclareMathOperator{\Ze}{Z}
\DeclareMathOperator{\Zei}{I}

\DeclareMathOperator{\Mor}{Mor}
\DeclareMathOperator{\colim}{colim}
\DeclareMathOperator{\Sph}{S}

\newcommand{\Qcoh}{\mathsf{Qcoh}}
\newcommand{\Ac}{\mathcal{A}}

\newcommand{\Vc}{\mathcal{V}}
\newcommand{\St}{\mathcal{P}}
\newcommand{\Eu}{\mathcal{E}}
\newcommand{\eu}{E}
\newcommand{\Fc}{\mathcal{F}}
\newcommand{\Uns}{\mathcal{Un}}
\newcommand{\Cc}{\mathcal{C}}

\newcommand{\Un}{\mathbf{1}}
\newcommand{\Su}{\Sigma^{\infty}}
\newcommand{\Sup}{\Sigma^{\infty}_+}
\newcommand{\Oc}{\mathcal{O}}
\newcommand{\Ec}{\mathcal{E}}
\newcommand{\Gc}{\mathcal{G}}
\newcommand{\Zz}{\mathbb{Z}}
\newcommand{\Pp}{\mathbb{P}}
\newcommand{\Nn}{\mathbb{N}}
\newcommand{\Tan}{T}

\newcommand{\Fp}{\mathbb{F}_p}

\newtheorem*{theorem*}{Theorem}
\newtheorem*{proposition*}{Proposition}
\newtheorem{thm}{Theorem}
\newtheorem{prop}[thm]{Proposition}
\newtheorem{cor}[thm]{Corollary}

\swapnumbers

\newtheorem{theorem}{Theorem}[section]
\newaliascnt{proposition}{theorem}
\newtheorem{proposition}[proposition]{Proposition}
\newaliascnt{lemma}{theorem}
\newtheorem{lemma}[lemma]{Lemma}
\newaliascnt{corollary}{theorem}
\newtheorem{corollary}[corollary]{Corollary}

\theoremstyle{definition}
\newaliascnt{remark}{theorem}
\newtheorem{remark}[theorem]{Remark}
\newaliascnt{example}{theorem}
\newtheorem{example}[example]{Example}
\newaliascnt{definition}{theorem}
\newtheorem{definition}[definition]{Definition}
\newaliascnt{notation}{theorem}

\newtheoremstyle{par}
  {}
  {}
  {}
  {}
  {}
  {.}
  { }
  {}%
\theoremstyle{par}
\newtheorem{para}[theorem]{}

\counterwithin{equation}{theorem}
\numberwithin{equation}{theorem}

\newcommand{\rref}[1]{(\ref{#1})}
\newcommand{\dref}[2]{(\ref{#1}.\ref{#2})}

\makeatletter
\newcommand*{\doublerightarrow}[2]{\mathrel{
  \settowidth{\@tempdima}{$\scriptstyle#1$}
  \settowidth{\@tempdimb}{$\scriptstyle#2$}
  \ifdim\@tempdimb>\@tempdima \@tempdima=\@tempdimb\fi
  \mathop{\vcenter{
    \offinterlineskip\ialign{\hbox to\dimexpr\@tempdima+1em{##}\cr
    \rightarrowfill\cr\noalign{\kern.5ex}
    \rightarrowfill\cr}}}\limits^{\!#1}_{\!#2}}}
\makeatother

\begin{document}
\begin{abstract}
We establish a purely geometric form of the concentration theorem (also called localization theorem) for actions of a linearly reductive group $G$ on an affine scheme $X$ over an affine base scheme $S$. It asserts the existence of a $G$-representation without trivial summand over $S$, which acquires over $X$ an equivariant section vanishing precisely at the fixed locus of $X$.

As a consequence, we show that the equivariant stable motivic homotopy theory of a scheme with an action of a linearly reductive group is equivalent to that of the fixed locus, upon inverting appropriate maps, namely the Euler classes of representations without trivial summands. We also discuss consequences for equivariant cohomology theories obtained using Borel's construction. This recovers most known forms of the concentration theorem in algebraic geometry, and yields generalizations valid beyond the setting of actions of diagonalizable groups on one hand, and that of oriented cohomology theories on the other hand. 

Finally, we derive a version of Smith theory for motivic cohomology, following the approach of Dwyer--Wilkerson in topology.
\end{abstract}

\author{Olivier Haution}
\title{The geometric concentration theorem}
\email{olivier.haution at gmail.com}
\address{Mathematisches Institut, Ludwig-Maximilians-Universit\"at M\"unchen, Theresienstr.\ 39, D-80333 M\"unchen, Germany}
\address{Dipartimento di Matematica e Applicazioni, Università degli Studi di Milano-Bicocca, via Roberto Cozzi 55, 20125 Milano, Italy}
\thanks{This work was supported by the DFG research grant HA 7702/5-1 and Heisenberg grant HA 7702/4-1.}

\subjclass[2010]{}

\keywords{}
\date{\today}

\maketitle

\numberwithin{theorem}{section}
\numberwithin{lemma}{section}
\numberwithin{proposition}{section}
\numberwithin{corollary}{section}
\numberwithin{example}{section}
\numberwithin{notation}{section}
\numberwithin{definition}{section}
\numberwithin{remark}{section}

\section*{Introduction}
The localization theorem has been a very influential tool in algebraic topology (see e.g.\ \cite{Atiyah-Segal-Index2,Quillen-Spectrum-equiv}). Roughly speaking it asserts that a manifold with a group action has the same equivariant cohomology as its fixed locus, upon inverting certain elements. Various fixed-point theorems may be viewed as consequences of the localization theorem for appropriate cohomology theories.

There are of course analogs in algebraic geometry, starting with Thomason's ``concentration'' theorem for $K$-theory \cite{Thomason-Lefschetz}, followed later by Edidin--Graham's version for Chow groups \cite{EG-Equ}. More recently, Levine proved a version for Witt cohomology \cite{Levine-Atiyah-Bott}. Let us also mention the paper of Ravi--Khan \cite{Khan-Ravi-cohomological}, which appeared essentially at the same time as the current paper and whose results appear to partly overlap those obtained here, even though the methods seem quite different.\\

All previous approaches to the concentration theorem in algebraic geometry (known to the author) involve stratification techniques, via Thomason's generic slice theorem \cite{Thomason-comparison}. In the present paper we take a somewhat different path, starting from a purely geometric form of the concentration theorem. To state it, let us consider a group scheme $G$ which is affine and of finite type over a noetherian scheme $S$. We assume that:
\begin{enumerate}[(a)]
\item
\label{enum:int:1}
$G$ is linearly reductive,

\item
\label{enum:int:2}
$G$ has the resolution property,

\item
\label{enum:int:3}
the $\Oc_S$-module $\Oc_S[G]$ is locally projective.
\end{enumerate}
We think of the assumptions \eqref{enum:int:2} and \eqref{enum:int:3} as mostly technical and likely to be verified in many concrete cases (they are automatic when $S$ is the spectrum of field). The assumption \eqref{enum:int:1} is more serious, and cannot be entirely avoided (at least one should exclude unipotent groups, see \rref{rem:unipotent}).

Let us consider the set of $G$-representations without trivial summand, namely
\[
\Vc_G = \{\text{$G$-equivariant vector bundles $V \to S$ such that $V^G=0$}\}.
\]
Here is our most primitive form of the concentration theorem:
\begin{thm}[Geometric concentration](See \rref{th:conc_main}.)
\label{th:int_conc_geom}
Assume that $S$ is affine, and let $X$ be an affine noetherian $G$-equivariant $S$-scheme. Then some element $V \in \Vc_G$ admits, when pulled back to $X$, a $G$-equivariant section whose vanishing locus is the fixed locus $X^G$.
\end{thm}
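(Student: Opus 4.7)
The plan is to reduce the problem to a finite-rank $G$-equivariant approximation of the non-invariant part of $\Oc_X$. Write $A = \Gamma(X,\Oc_X)$, a noetherian $G$-equivariant $\Oc_S$-algebra.

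First I would use linear reductivity of $G$ (together with hypotheses (a)--(c)) to produce a decomposition $A = A^G \oplus A'$ of $G$-equivariant $\Oc_S$-modules with $(A')^G = 0$, and verify that the ideal $I = A \cdot A'$ is exactly the defining ideal of $X^G$ in $X$. One containment is immediate: every class in $A/I$ has an $A^G$-representative, on which the coaction is trivial. For the converse, let $J \subset A$ be any ideal such that $A/J$ carries the trivial $G$-action; then the quotient $A \to A/J$ is tautologically $G$-equivariant, so $J$ is $G$-stable, and therefore decomposes as $J = (J \cap A^G) \oplus (J \cap A')$. Consequently $A'/(J \cap A')$ appears as a summand of $A/J$ with no trivial isotypic component, and the assumption forces it to vanish; hence $A' \subset J$ and $I \subset J$.

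Next I would choose finitely many elements $f_1, \ldots, f_n \in A'$ generating $I$ (possible by noetherianity of $A$), enlarge them to a finitely generated $G$-equivariant $\Oc_S$-submodule of $A$ via the local finiteness of $G$-representations, and use the above splitting to produce a finitely generated $G$-equivariant submodule $M \subset A'$ containing the $f_i$ with $M^G = 0$. The resolution property \eqref{enum:int:2} then furnishes a $G$-equivariant vector bundle $W$ on $S$ together with a $G$-equivariant surjection $W \twoheadrightarrow M$; decomposing $W = W^G \oplus W_0$ (with $W_0$ a $G$-equivariant vector bundle on $S$ and $W_0^G = 0$), this surjection factors through $W_0$ since $M^G = 0$, yielding a $G$-equivariant $\Oc_S$-linear map $W_0 \to A$ whose image generates $I$.

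Finally, setting $V = W_0^\vee$ (which lies in $\Vc_G$ since dualization preserves the absence of a trivial isotypic component), the map $W_0 = V^\vee \to A$ corresponds to a $G$-equivariant section of $V \times_S X \to X$ whose vanishing locus is the closed subscheme of $X$ cut out by the image ideal, namely $I$, and therefore coincides with $X^G$. The main technical hurdle will be the second step: hypotheses (a)--(c) must jointly ensure that $G$-equivariant $\Oc_S$-modules are locally finite relative to $S$, that finitely generated ones admit surjections from $G$-equivariant vector bundles on $S$, and that invariants split off as a vector-bundle direct summand. Once these structural facts are established (presumably earlier in the paper), the remaining assembly is essentially formal.
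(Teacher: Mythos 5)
The pivotal step of your proposal --- the $G$-equivariant decomposition $A = A^G \oplus A'$ with $A'$ a $G$-\emph{invariant} $\Oc_S$-submodule satisfying $(A')^G=0$, and its companion $W = W^G\oplus W_0$ --- is not available at the stated level of generality, and this is precisely the pitfall the paper's proof is engineered to avoid. Linear reductivity in the sense of \rref{def:lin_red} produces a retraction $\rho_{\Fc}$ of $\Fc^G\subset\Fc$ only as a morphism in $\Qcoh(S)$, not in $\Qcoh^G(S)$ (\rref{prop:split_equalizer}), so its kernel has no reason to be a $G$-invariant submodule. Over the spectrum of a field you would be rescued by the semisimplicity of $\Qcoh^G(S)$ (\rref{p:lin_red_field}), but $S$ is an arbitrary noetherian affine scheme here, and the equivariant complement is exactly what is missing. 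You flag this as a structural fact ``presumably established earlier in the paper'', but it is not there; wherever the paper needs a module with vanishing fixed part it uses the $G$-equivariant \emph{quotient} $\Fc/\Fc^C$, which carries a canonical $G$-structure, has trivial invariants by \rref{cor:line_red_fixed_0}, and is still locally free because the non-equivariant splitting of \rref{prop:split_equalizer} identifies it in $\Qcoh(S)$ with a complement (see the proof of \rref{lemm:quotien_free}). This is not a cosmetic substitution in your setting, because the ideal $A\cdot A'$ that carries your whole step 1 cannot be formed from a quotient of $A$ rather than a submodule.

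The paper's actual route (\rref{prop:main_affine}) is therefore genuinely different: one works with the regular representation $k[G]$ and its $G$-equivariant quotient $P_G = k[G]/(k[G])^C$, and the $G$-invariant elements cutting out $X^C$ are manufactured by the antipode-twisted coaction $\sigma_G = (\pi_G\otimes\id_A)\circ(\chi_G\otimes\id_A)\circ\alpha_G \colon A\to P_G\otimes_k A$, which lands in $(P_G\otimes_k A)^G$ by \rref{lemm:gamma_inv}; that $\Zei_A(\sigma_G(A))$ coincides with the ideal of \rref{lemm:fixed_locus} rests on the Hopf-algebra identities \rref{eq:rho_alpha} and \rref{eq:double_antipode}, and this is the genuine technical core rather than the ``essentially formal assembly'' you anticipate. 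Your closing steps do track the paper's \rref{th:conc_main} well --- local finiteness of comodules, the resolution property, factoring through an invariant-free part of $W$ (again, the paper uses $W/W^G$, not an equivariant complement), and the correct observation that $(W_0^\vee)^G=0$ follows from $W_0^G=0$ for vector bundles --- but these are all downstream of the missing splitting, and the step you deferred is where the real work lies.
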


Let us sketch how the more standard form of the concentration theorem for a cohomology theory $H$ is derived from this theorem. In the situation of the theorem assume additionally that $X^G=\varnothing$. If $H$ admits a theory of Euler classes, then the Euler class $e(V)$ will vanish in $H(X)$, and thus $H(X)[E_G^{-1}]=0$ where $E_G \subset H(S)$ is the set of Euler classes of the elements of $\Vc_G$. If $H$ has the homotopy invariance property, one may remove the assumption that $X$ is affine using Jouanolou's trick. Finally, if there is a localization sequence for the theory $H$, we deduce that $H(X)[E_G^{-1}] \simeq H(X^G)[E_G^{-1}]$ for any scheme $X$ with a $G$-action, which is the concentration theorem.\\

This argument can be made precise using the equivariant stable motivic homotopy category $\SH^G(-)$ defined by Hoyois \cite{Hoyois-equivariant}. Consider the set of ``Euler classes morphisms'' in $\SH^G(S)$
\[
\Eu_G = \{ e_V \colon \Un_S \to \Sph^V, \; \text{for $V \in \Vc_G$}\}.
\]
Here the sphere $\Sph^V=\Su \Th V$ is the infinite suspension of the Thom space of $V$, and $e_V$ is the cone of the map induced by the projection $V^\circ \to S$ from the complement of the zero-section in $V$.

\begin{thm}[Motivic concentration](See \rref{th:localisation}.)
\label{th:int_conc_motivic}
Let $X$ be a $G$-quasi-projective $S$-scheme. Then we have an adjoint equivalence $\SH^G(X)[\Eu_G^{-1}] \simeq \SH^G(X^G)[\Eu_G^{-1}]$.
\end{thm}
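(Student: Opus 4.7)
The plan is to exploit the recollement attached to the closed immersion $i\colon X^G \hookrightarrow X$ with open complement $j\colon U \hookrightarrow X$. In $\SH^G(X)$ one has a localization cofiber sequence $j_! j^* \to \id \to i_* i^*$ (as set up by Hoyois for $G$-quasi-projective schemes), and the four functors $i^*, i_*, j^*, j_!$ are compatible with the localization at $\Eu_G$, since Euler class morphisms are pulled back from $S$ and are therefore preserved by base change. Granted this, the theorem reduces to the single vanishing statement $\SH^G(U)[\Eu_G^{-1}] = 0$: then $j_! j^* = 0$ after localization, the unit $\id \to i_* i^*$ becomes an equivalence, and combined with the identity $i^* i_* \simeq \id$ (always valid for a closed immersion) this yields the claimed adjoint equivalence between $i^*$ and $i_*$. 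The central task is therefore: for any $G$-quasi-projective $S$-scheme $Y$ with $Y^G = \varnothing$, show that $\SH^G(Y)[\Eu_G^{-1}] = 0$.

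Before applying Theorem \ref{th:int_conc_geom}, which requires an affine scheme, I would first reduce to $Y$ affine via an equivariant Jouanolou device: since $Y$ is $G$-quasi-projective, one constructs a $G$-equivariant vector bundle torsor $p\colon \widetilde Y \to Y$ with $\widetilde Y$ affine and noetherian. The pullback $p^*\colon \SH^G(Y) \to \SH^G(\widetilde Y)$ is an equivalence by $\Au$-homotopy invariance, and $\widetilde Y^G$ is a vector bundle torsor over $Y^G = \varnothing$, hence empty. So one may assume $Y$ is affine with $Y^G = \varnothing$.

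Theorem \ref{th:int_conc_geom} then furnishes some $V \in \Vc_G$ together with a $G$-equivariant section $\sigma\colon Y \to V_Y$ whose vanishing locus is $Y^G = \varnothing$; therefore $\sigma$ lifts to a $G$-equivariant section of the projection $V_Y^\circ \to Y$. Applying $\Sup$ and using the $\Au$-weak equivalence $\Sup V_Y \simeq \Un_Y$, the defining cofiber sequence
\[
\Sup(V_Y^\circ) \to \Un_Y \xrightarrow{e_V|_Y} \Sph^V|_Y
\]
has the property that its first arrow admits a section, which forces $e_V|_Y$ to be nullhomotopic in $\SH^G(Y)$. After inverting $\Eu_G$, the morphism $e_V|_Y$ is simultaneously an equivalence and null, which forces $\Un_Y \simeq 0$ in $\SH^G(Y)[\Eu_G^{-1}]$; since this category is symmetric monoidal with unit $\Un_Y$, every object vanishes, proving the sought vanishing.

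The main obstacles I anticipate are twofold. First, establishing the equivariant Jouanolou construction over a general noetherian affine base $S$, with the required $G$-equivariant vector-bundle-torsor structure, will require some care. Second, and more substantially, one must set up $\SH^G(-)[\Eu_G^{-1}]$ as an assignment compatible with the whole recollement: concretely, one needs that $j_!$, $i_*$ and their adjoints descend to the localizations, and that the class $\Eu_G$ pulls back to itself along arbitrary $G$-equivariant morphisms, so that the localized recollement makes sense. The cofiber-sequence/null-map argument itself, by contrast, is entirely formal.
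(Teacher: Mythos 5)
Your proposal follows the same route as the paper: reduce via the recollement $j_!j^* \to \id \to i_*i^*$ to showing that Euler classes become simultaneously zero and invertible over the open complement $U = X \smallsetminus X^G$, establish the vanishing via Jouanolou's trick plus the geometric concentration theorem, and deduce the adjoint equivalence from $i^*i_* \simeq \id$. Your direct route — show $\SH^G(U)[\Eu_G^{-1}]=0$ by monoidality and then kill $j_!j^*$ — is a mild streamlining of the paper's bookkeeping via ``$e_V$-isomorphisms'' (\rref{def:e_V_isom}, \rref{prop:euler_section}), and the compatibility concerns you flag (descent of $i_*, i^*, j_!, j^*$ to the localization, stability of $\Eu_G$ under pullback) are indeed the ones the paper checks via the projection formula.

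There is, however, a genuine gap in the reduction to the affine case. The theorem does \emph{not} assume $S$ affine, only noetherian, and Hoyois's equivariant Jouanolou trick [Hoyois-equivariant, Proposition~2.20] produces an affine scheme only when the base is affine. For a general noetherian $S$, a single Jouanolou device over $U$ is merely affine \emph{over} $S$, not absolutely affine, so \rref{th:conc_main} (which requires an absolutely affine input) does not apply directly. The paper circumvents this in \rref{cor:no_fixed_SH_vanishes} by choosing a finite cover $U_1,\dots,U_n$ of $S$ by affine opens, applying Jouanolou and \rref{th:conc_main} over each $X \times_S U_k$ to obtain $V_k \in \Vc_G$ with $e_{V_k}$ vanishing on $X\times_S U_k$, and then invoking the gluing lemma \rref{lemm:vanish_product_cover} (which rests on the closed-open decomposition \rref{lemm:Euler_closed_open}) to conclude that $e_{V_1 \oplus \cdots \oplus V_n}$ vanishes globally on $X$. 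This gluing step — showing that a finite product of morphisms, each vanishing on one piece of a $G$-invariant open cover, vanishes globally — is the missing ingredient in your proposal and is not a routine consequence of the recollement. Your phrase ``general noetherian affine base $S$'' suggests you may have implicitly assumed $S$ affine, in which case the argument is fine but proves a weaker statement.
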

More concretely:
\begin{cor}
\label{cor_intro}
Let $X$ be a smooth $G$-quasi-projective $S$-scheme. Then the closed immersion $X^G \to X$ becomes an isomorphism in $\SH^G(S)[\Eu_G^{-1}]$
\end{cor}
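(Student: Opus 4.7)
The plan is to reduce to Theorem~\ref{th:int_conc_motivic} via equivariant Morel--Voevodsky homotopy purity. Let $i \colon X^G \hookrightarrow X$ denote the closed immersion of the fixed locus and $j \colon U = X \setminus X^G \hookrightarrow X$ its open complement. Under the standing hypotheses ($G$ linearly reductive and $X$ smooth over $S$), the fixed locus $X^G$ is smooth over $S$, so its normal bundle $N := N_{X^G/X}$ is a $G$-equivariant vector bundle on $X^G$ whose $G$-invariants vanish fiberwise (otherwise $X^G$ could be enlarged).

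The first step is to show that $\Sup U$ vanishes in $\SH^G(S)[\Eu_G^{-1}]$. Since $U^G = \varnothing$, Theorem~\ref{th:int_conc_motivic} applied to $U$ gives $\SH^G(U)[\Eu_G^{-1}] \simeq \SH^G(\varnothing)[\Eu_G^{-1}] = 0$. Since the Euler classes in $\Eu_G$ are pulled back from $S$, the smooth pushforward commutes with the $\Eu_G$-localizations, and so $\Sup U = 0$ in $\SH^G(S)[\Eu_G^{-1}]$. The same argument applied to the pair $X^G \subset N$, noting that $(N \setminus X^G)^G = \varnothing$, yields $\Sup(N \setminus X^G) = 0$ in $\SH^G(S)[\Eu_G^{-1}]$.

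Next, equivariant Morel--Voevodsky homotopy purity applied to the smooth pairs $(X, X^G)$ and $(N, X^G)$ produces cofiber sequences in $\SH^G(S)$:
\[
\Sup U \to \Sup X \to \Th(N), \qquad \Sup(N \setminus X^G) \to \Sup N \to \Th(N).
\]
Combined with the vanishings above, both collapse maps $\Sup X \to \Th(N)$ and $\Sup N \to \Th(N)$ become isomorphisms in $\SH^G(S)[\Eu_G^{-1}]$. In parallel, the zero section $X^G \hookrightarrow N$ induces an isomorphism $\Sup X^G \xrightarrow{\simeq} \Sup N$ by $\Au$-homotopy invariance.

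To conclude, I would invoke naturality of the purity equivalence to identify the composite $\Sup X^G \to \Sup X \to \Th(N)$ (induced by the scheme-theoretic inclusion $i$ followed by the collapse) with $\Sup X^G \xrightarrow{\simeq} \Sup N \to \Th(N)$; since the second and third maps in $\Sup X^G \to \Sup X \to \Th(N)$ are isomorphisms in $\SH^G(S)[\Eu_G^{-1}]$, so is the first. The main obstacle is precisely this final compatibility step: verifying that the zigzag of isomorphisms constructed through two different applications of purity is genuinely induced by the scheme inclusion $X^G \hookrightarrow X$, which reduces to the standard fact that the Morel--Voevodsky purity collapse restricts along $X^G$ to the Thom class of $N$. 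A secondary technical point is the commutation of smooth pushforward with the $\Eu_G$-localization used in the first step, which is formal once one notes that the Euler classes are pulled back from $S$.
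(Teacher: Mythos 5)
Your proof is correct and follows essentially the same route as the paper's proof (Corollary~\ref{cor:push_pull}): both use the vanishings $\Sup U = 0$ and $\Sup N^{\circ} = 0$ in $\SH^G(S)[\Eu_G^{-1}]$ to turn the purity collapse $\Sup X \to \Su \Th N$ and the Thom-space collapse $\Sup N \to \Su \Th N$ into isomorphisms, then invoke $\Au$-homotopy invariance for the zero section, and the compatibility you flag as the ``main obstacle'' is exactly the observation the paper records in \rref{p:push_pull} from Hoyois' construction of the purity isomorphism. The only minor difference is that you extract $\Sup U = 0$ from the full Theorem~\ref{th:localisation}, whereas the paper obtains it more directly from \rref{cor:no_fixed_SH_vanishes} together with the projection formula; both work.
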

Let us observe that, in $\SH^G(S)$, inverting the morphisms in $\Eu_G$ amounts to imposing the vanishing of the objects $\Sup (V^{\circ})$ for $V \in \Vc_G$. Since $V^\circ = V \smallsetminus V^G$ for such $V$, it is clearly necessary to do so in order to identify, in a quotient of $\SH^G(S)$, every smooth $G$-equivariant $S$-scheme with its fixed locus. Corollary~\ref{cor_intro} asserts that one needs impose no further vanishing beyond these evident ones.\\

Next, we discuss in \S\ref{sect:coh} the consequences for theories which are representable in $\SH^G(S)$. To a ring spectrum $A \in \SH^G(S)$ corresponds a cohomology theory $A^{*,*}$ and a Borel--Moore homology theory $A_{*,*}$. What we call a \emph{pseudo-orientation} of $A$ is the data of elements $\beta_E \in A^{*,*}(X;-E)=A^{*,*}(\Th E)$ for each $G$-equivariant vector bundle $E \to X$ with $X$ smooth over $S$, satisfying certain conditions (see \rref{def:beta:1}). This permits to define classes $b(E) = e_E^*(\beta_E) \in A^{*,*}(X)$ for such $E$, and to consider the set
\[
E_G = \{b(V) \text{ for $V \in \Vc_G$}\} \subset A^{*,*}(S).
\]
An example of a theory represented by a spectrum $A \in \SH^G(S)$ equipped with a pseudo-orientation is equivariant homotopy $K$-theory defined by Hoyois \cite{Hoyois-cdh-equiv}. (Further examples in the non-equivariant case $G=1$ will be given below.)

\begin{thm}[See \rref{prop:conc_equiv_coh} and \rref{prop:inverse_1}]
\label{th:int_conc_equiv}
Let $A \in \SH^G(S)$ be a ring spectrum equipped with a pseudo-orientation. Let $X$ be a $G$-quasi-projective $S$-scheme, and $i \colon X^G \to X$ the immersion of the fixed locus. Then we have isomorphisms
\[
i^* \colon A^{*,*}(X)[E_G^{-1}] \xrightarrow{\sim} A^{*,*}(X^G)[E_G^{-1}] \quad \text{and} \quad i_* \colon A_{*,*}(X^G)[E_G^{-1}] \xrightarrow{\sim} A_{*,*}(X)[E_G^{-1}].
\]
If in addition $X$ is smooth over $S$, and $N$ is the normal bundle to $i$, then
\[
i_*(b(N)^{-1} \beta_N) = 1 \in A^{*,*}(X)[E_G^{-1}].
\]
\end{thm}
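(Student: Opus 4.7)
The plan is to deduce the theorem from the motivic concentration Theorem~\ref{th:int_conc_motivic}, translating the statement about $\SH^G$ into one about $A$-cohomology and $A$-Borel--Moore homology.

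First, I would observe that for a $G$-equivariant $S$-scheme $X$ with structure morphism $f_X \colon X \to S$, the groups $A^{*,*}(X)$ and $A_{*,*}(X)$ arise as $\Hom$-groups in $\SH^G(X)$ into $f_X^*A$ and $f_X^!A$ respectively. The base-change identities $i^*f_X^* = f_{X^G}^*$ and $i^!f_X^! = f_{X^G}^!$ combined with the equivalence $\SH^G(X)[\Eu_G^{-1}] \simeq \SH^G(X^G)[\Eu_G^{-1}]$ implemented by $(i^*, i_*)$ immediately translate into isomorphisms of localized $\Hom$-groups. For cohomology this yields $i^*$ as an isomorphism; for Borel--Moore homology, using the adjunction $i_* \dashv i^!$, one obtains $i_*$ as an isomorphism.

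Second, I would identify the inversion of the morphisms in $\Eu_G$ (carried out inside $\SH^G$) with the inversion of the elements of $E_G$ (carried out inside $A^{*,*}$). The pseudo-orientation provides, for each $V \in \Vc_G$, a morphism $\Sph^V \to A$ in $\SH^G(S)$ representing $\beta_V$, whose composite with $e_V \colon \Un \to \Sph^V$ is precisely $b(V) \in A^{*,*}(S)$. Consequently, on $\Hom$-groups into $A$, the action of $e_V$ coincides with multiplication by $b(V)$, and the localized $\Hom$-groups are exactly $A^{*,*}(-)[E_G^{-1}]$ and $A_{*,*}(-)[E_G^{-1}]$, yielding the first two isomorphisms of the theorem.

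For the final identity, assume $X$ is smooth over $S$, so that $i$ is a regular closed immersion with normal bundle $N$ and motivic purity gives an equivalence $X/(X \setminus X^G) \simeq \Th N$ in $\SH^G(X^G)$. Through this equivalence, the composite
\[
A^{*,*}(X^G;-N) \xrightarrow{i_*} A^{*,*}(X) \xrightarrow{i^*} A^{*,*}(X^G)
\]
sends $\beta_N \mapsto b(N)$, which is the pseudo-oriented analogue of the classical self-intersection identity $i^*i_*(1) = e(N)$ for oriented theories. Therefore $i^*\bigl(i_*(b(N)^{-1}\beta_N)\bigr) = b(N)^{-1} \cdot b(N) = 1$ in $A^{*,*}(X^G)[E_G^{-1}]$, and since $i^*$ is an isomorphism after inverting $E_G$ (by the first part) and sends $1$ to $1$, the desired identity $i_*(b(N)^{-1}\beta_N) = 1$ in $A^{*,*}(X)[E_G^{-1}]$ follows.

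The main technical obstacle is the bookkeeping of twists by virtual vector bundles throughout: because the pseudo-orientation supplies only a partial orientation, the usual Thom isomorphism is unavailable, and $\beta_N$ itself must play the role of a Thom class in the self-intersection computation. Verifying that the composite $i^*i_*$ has the expected shape on the twisted group $A^{*,*}(X^G;-N)$, and that the translation in the second step respects the $E^{*,*}$-module structure on all relevant $\Hom$-groups, is the delicate point.
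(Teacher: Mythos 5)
Your proposal diverges from the paper's argument in a way that introduces a genuine gap at the crucial step. You want to derive the cohomological statement from the categorical equivalence $\SH^G(X)[\Eu_G^{-1}] \simeq \SH^G(X^G)[\Eu_G^{-1}]$ of Theorem~\ref{th:localisation}, and your key claim is that ``the localized $\Hom$-groups are exactly $A^{*,*}(-)[E_G^{-1}]$.'' That identification is not justified and is in general false. The $\Hom$-set in the Verdier quotient $\SH^G(X)[\Eu_G^{-1}]$ is a filtered colimit over roofs, which here means a colimit of twisted groups of the form $\Hom(B,\Sigma^V\pi_X^*A)$ as $V$ ranges over sums of elements of $\Vc_G$. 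A pseudo-orientation only supplies a map $\bar\beta_V\colon \Sigma^V\pi_X^*A \to \Sigma^{*,*}\pi_X^*A$ with $\bar\beta_V\circ e_{V,A}$ equal to multiplication by $b(V)$; it does not make $\bar\beta_V$ an isomorphism (that would be a Thom isomorphism, which pseudo-orientations deliberately do not provide, cf.\ the paper's remark that cup-product with $\beta_E$ need not be an isomorphism). Consequently the induced comparison map from that colimit to $\Hom(B,\pi_X^*A)[\eu_G^{-1}]$ need be neither injective nor surjective, and your translation step does not go through. The paper sidesteps this entirely: it never passes through Theorem~\ref{th:localisation}, but instead uses the more primitive Proposition~\ref{prop:euler_section} to produce, for a single explicit $V\in\Vc_G$, commutative diagrams showing that multiplication by $b(V)$ on $A^{*,*}(X;\xi)$ factors through $i^*$ (and dually through $i_*$ on Borel--Moore homology). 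That directly exhibits an inverse to $i^*$ up to a single power of $b(V)$, which is exactly what localization at $\eu_G$ requires.

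There is also a second gap in your treatment of the final identity. You compute $i^*\bigl(i_*(b(N)^{-1}\beta_N)\bigr)=b(N)^{-1}b(N)=1$ as if $b(N)^{-1}$ were already available in $A^{*,*}(X^G)[\eu_G^{-1}]$. But $N$ is a $G$-equivariant bundle over $X^G$, not over $S$, so $b(N)$ is not an element of $\eu_G$ and its invertibility in the localized ring is a nontrivial assertion. In the paper this is the first statement of Proposition~\ref{prop:inverse_1}, proved via \rref{lemm:is_invertible} and \rref{lemm:summand}: one uses Jouanolou's trick, the resolution property, and linear reductivity (splitting of exact sequences of equivariant bundles over an affine base) to exhibit $N$ as a direct summand of the restriction of some $V\in\Vc_G$, whence $b(N)$ divides a power of $b(V)$. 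Your proposal should include this input, since the displayed formula is meaningless without it. The computation that $i^*\circ i_*$ sends $y\cdot\beta_N$ to $y\cdot b(N)$, and the subsequent appeal to the first part of the theorem, do match the paper once these two issues are repaired.
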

In practice though, equivariant theories often arise in a somewhat different way. Indeed, every non-equivariant cohomology theory can be turned into an equivariant one using Borel's construction, a process which involves considering approximations of the space $EG$, and passing to the limit. In \S\ref{sect:Borel}, we prove concentration theorems for such theories. Some care is required due to interactions between limits and localization at a multiplicative subset, and at that point having at hand a geometric result such as Theorem~\ref{th:int_conc_geom} becomes particularly useful.

Explicitly let us consider a ring spectrum $A \in \SH(S)$, and assume that $A$ is equipped with a (non-equivariant) pseudo-orientation. For instance, in case $A$ is commutative, any classical orientation of $A$ (by the groups $\GL,\SL^c,\SL,\Sp$, or even a hyperbolic orientation in the sense of \cite{hyp}) yields a pseudo-orientation of $A$. When $A$ is $\GL$-oriented $b(E)$ is the Euler class of $E$, while it is its Pontryagin class for the other orientations mentioned above.

For a given family $E_mG$ (for $m \in \Nn$) of $G$-equivariant motivic spaces approximating of $EG$, Borel's construction yields theories $A^{*,*}_G(-)$ and $A_{*,*}^G(-)$ (see \rref{p:Borel}). Explicitly, when $X$ is a $G$-quasi-projective $S$-scheme, we have for $p,q \in \Zz$ 
\[
A^{p,q}_G(X) = \lim_m A^{p,q}((X \times_S E_mG)/G).
\]
From the pseudo-orientation of $A$, we obtain classes $\beta^G_E \in A^{*,*}_G(X;-E)$ and $b^G(E) \in A^{*,*}_G(E)$ for each  $G$-equivariant vector bundle $E \to X$ with $X$ smooth over $S$ (see \rref{p:beta}). Writing as above $E_G = \{b^G(V), V \in \Vc_G\}\subset A^{*,*}_G(S)$, we have the analog of Theorem~\ref{th:int_conc_equiv}:

\begin{thm}[See \rref{prop:conc_Borel-type} and \rref{prop:formula_Borel-type}]
\label{th:int_conc_Borel}
Let $X$ be a $G$-quasi-projective $S$-scheme, and $i \colon X^G \to X$ the immersion of the fixed locus. Then we have isomorphisms
\[
i^* \colon A^{*,*}_G(X)[E_G^{-1}] \xrightarrow{\sim} A^{*,*}_G(X^G)[E_G^{-1}] \quad \text{and} \quad i_* \colon A_{*,*}^G(X^G)[E_G^{-1}] \xrightarrow{\sim} A_{*,*}^G(X)[E_G^{-1}].
\]
If in addition $X$ is smooth over $S$, and $N$ is the normal bundle to $i$, then
\[
i_*(b^G(N)^{-1} \beta^G_N) = 1 \in A^{*,*}_G(X)[E_G^{-1}].
\]
\end{thm}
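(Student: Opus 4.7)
My plan is to imitate the proof of Theorem~\ref{th:int_conc_equiv}, using the geometric Theorem~\ref{th:int_conc_geom} as input, but with additional bookkeeping to accommodate the inverse limit defining $A^{*,*}_G$ and $A_{*,*}^G$.

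The central reduction is to the vanishing statements $A^{*,*}_G(X)[E_G^{-1}] = 0$ and $A_{*,*}^G(X)[E_G^{-1}] = 0$ whenever $X^G = \varnothing$. From these, the cohomology and Borel--Moore isomorphisms follow by applying the Borel-type localization sequences associated with the closed immersion $X^G \hookrightarrow X$ and its open complement $U = X \smallsetminus X^G$, while the self-intersection formula $i_*(b^G(N)^{-1}\beta^G_N) = 1$ is obtained by the customary deformation-to-the-normal-bundle argument together with the compatibility of $\beta^G_{\bullet}$ with pullbacks. To prove the vanishing, I first use a $G$-equivariant Jouanolou trick and the $\Au$-invariance of $A^{*,*}$ applied level-wise to $Y_m := (X \times_S E_mG)/G$ to reduce to the case where $X$ is affine. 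Then Theorem~\ref{th:int_conc_geom} supplies some $V \in \Vc_G$ together with a $G$-equivariant section $s$ of $V \times_S X \to X$ whose vanishing locus equals $X^G = \varnothing$, so that $s$ factors through $(V \times_S X) \smallsetminus 0_X$.

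The key step is to transport this geometric vanishing through the Borel construction. At each level $m$, the section $s$ descends to a nowhere-vanishing section of the vector bundle $(V \times_S X \times_S E_mG)/G \to Y_m$, which is the pullback to $Y_m$ of the bundle $V_{BG_m} := (V \times_S E_mG)/G$ over $BG_m := E_mG/G$ that defines $b^G(V)$. Since the Euler morphism of a vector bundle admitting a nowhere-vanishing section is null-homotopic, the pullback of $b(V_{BG_m}) \in A^{*,*}(BG_m)$ to $A^{*,*}(Y_m)$ vanishes for every $m$. Passing to the inverse limit yields $b^G(V)|_X = 0$ in $A^{*,*}_G(X)$, and hence $A^{*,*}_G(X)[b^G(V)^{-1}] = 0$. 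The Borel--Moore statement follows from the module structure of $A_{*,*}^G(X)$ over $A^{*,*}_G(X)$, on which $b^G(V)$ now acts by zero.

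The main difficulty will be the interaction of the inverse limit $\lim_m$ with the inversion of $E_G$ and with the localization sequences, since the latter only exist at each finite level and must be promoted to the limit; this will presumably rest on a Mittag--Leffler or $\lim^1$ vanishing verification for the towers involved. Furthermore, inverting a multiplicative set does not in general commute with $\lim_m$, so some care is required to conclude that the level-wise concentration produces the claimed isomorphism after taking limits. It is precisely at this point that Theorem~\ref{th:int_conc_geom} becomes essential: it yields a \emph{single} $V \in \Vc_G$ handling all of $X$ uniformly, so that the relevant element of $E_G$ killing the limit is independent of $m$ and of any Mittag--Leffler index.
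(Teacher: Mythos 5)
Your overall strategy is the right one — use the geometric Theorem~\ref{th:int_conc_geom} to produce a \emph{single} $V \in \Vc_G$ that works at every finite level $m$, and then deal with the interaction between $\lim_m$ and the localization at $E_G$. However, two gaps in the proposal are substantial.

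First, you frame the passage from level-wise exactness to the isomorphism after inverting $E_G$ as a matter of Mittag--Leffler or $\lim^1$ vanishing ``for the towers involved.'' This is the wrong mechanism, and it is not what makes the argument work. The paper does not promote the localization sequence to the limit and then invert; instead, it extracts from the $e_V$-isomorphism (Proposition~\ref{prop:euler_section:2}) the precise statement that multiplication by $b(V_m)$ annihilates the kernel and cokernel of $i_m^*$ and $(i_m)_*$ at each level $m$ (Lemma~\ref{lemm:ker_coker}), and then invokes a direct, elementary algebraic lemma about inverse systems of graded modules (Lemma~\ref{lemm:lim_loc}): if a central element $s$ kills $A_m$ and $D_m$ in exact sequences $A_m\to B_m\to C_m\to D_m$ compatible with transition maps, then $(\lim_m B_m)[s^{-1}]\to(\lim_m C_m)[s^{-1}]$ is already an isomorphism, with no $\lim^1$ or Mittag--Leffler hypothesis. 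That lemma exploits both the uniform element $s$ (your $b^G(V)$) and the graded nature of the limit (which forces elements to be supported in bounded degrees). In contrast, the route you sketch would also need to know that inverting $E_G$ commutes with $\lim_m$ and that the relevant towers are Mittag--Leffler, neither of which is addressed, and the latter is dubious. The uniformity in $V$ is indeed the essential input, as you say — but the correct thing it buys you is Lemma~\ref{lemm:lim_loc}, not $\lim^1$ vanishing.

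Second, the self-intersection formula $i_*(b^G(N)^{-1}\beta^G_N)=1$ is dispatched far too quickly. The main point there is not the deformation to the normal bundle (which is subsumed in the construction of $\overline i$ and the identity of \rref{p:push_pull}), but rather that $b^G(N)$ is \emph{invertible} in $A^{*,*}_G(X^G)[E_G^{-1}]$, so that the expression $b^G(N)^{-1}\beta^G_N$ makes sense at all. This is Lemma~\ref{lemm:is_invertible:b} and it is not automatic: it needs $N^G=0$ (from \rref{p:smooth_fixed_locus}), another pass through Jouanolou to reduce to the affine case, Lemma~\ref{lemm:quotien_free} to realize $N$ as a $G$-equivariant quotient of a pullback of some $V\in\Vc_G$, an argument over an affine cover using \rref{lemm:vanish_product_cover} to show $(b(V_m)-b(F_m)b(N_m))^n=0$ uniformly in $m$, and then Lemma~\ref{lemm:lim_loc} once more. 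Your proposal does not engage with any of this, so as written the formula is not yet justified.

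\end{document}
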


Next, we obtain the following result, which is perhaps disappointing:
\begin{prop}[see \rref{rem:mult_type_oriented}]
\label{prop:int_GL}
Let $S$ be the spectrum of a field, and $A \in \SH(S)$ a $\GL$-oriented commutative ring spectrum. If the group $G$ is not of multiplicative type, then $A^{*,*}_G(S)[E_G^{-1}]=0$.
\end{prop}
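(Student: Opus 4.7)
To prove $A^{*,*}_G(S)[E_G^{-1}]=0$, it suffices to exhibit some $V\in\Vc_G$ with $b^G(V)=0$ in $A^{*,*}_G(S)$: in the localization the class $b^G(V)$ would then be simultaneously invertible and zero, forcing $1=0$ and hence the whole ring to vanish. My plan is to construct such $V$ directly from an irreducible representation, using the weight theory of a maximal torus.

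Since $G$ is linearly reductive and not of multiplicative type, after base change to an algebraic closure (harmless by descent for the vanishing statement) $G$ admits an absolutely irreducible representation $W$ of dimension $d\geq 2$. Set $V:=\End(W)/k\cdot\id$, the trace-free endomorphisms, of dimension $d^2-1$. Schur's lemma gives $V^G=0$, so $V\in\Vc_G$. The crucial observation is that upon restriction to a maximal torus $T\subseteq G^0$, the weights of $V$ are the differences $t_i-t_j$ of the weights $t_i$ of $W|_T$, with one zero removed, so the weight $0$ still appears in $V|_T$ with multiplicity $d-1\geq 1$. Hence one of the Chern roots of $V|_T$ is zero and $c_{\dim V}(V|_T)=0$ in $A^{*,*}_T(S)$.

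For $G$ connected reductive, the $\GL$-oriented splitting principle makes the pullback $A^{*,*}_G(S)\to A^{*,*}_T(S)$ injective on characteristic-class expressions, so $b^G(V)=c_{\dim V}(V)=0$ in $A^{*,*}_G(S)$, completing the proof in this case. A more structural reformulation uses Theorem~\ref{th:int_conc_Borel}: applied to the $G$-quasi-projective scheme $X=G/T$, for which $T\lneq G$ forces $X^G=\varnothing$, the theorem gives $A^{*,*}_G(G/T)[E_G^{-1}]=0$. Combined with the identification $A^{*,*}_G(G/T)\simeq A^{*,*}_T(S)$ and the injectivity of the restriction, one recovers the vanishing of $A^{*,*}_G(S)[E_G^{-1}]$.

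The main obstacle I anticipate is the case when $G$ is not connected reductive of positive semisimple rank—for instance, when $G^0$ is a torus and $\pi_0(G)$ is a non-abelian finite group, which is the generic picture of a linearly reductive non-multiplicative-type group in positive characteristic. The torus-weight argument alone no longer suffices, and one must instead apply Theorem~\ref{th:int_conc_geom} directly to an affine Jouanolou approximation of the classifying scheme $EG$ (on which $G$ acts freely), extract a representation $V$ whose pullback admits a nowhere-vanishing equivariant section, and track its top Chern class into $A^{*,*}_G(S)$ through the tower of finite approximations used to define the Borel theory.
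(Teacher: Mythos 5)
Your approach differs fundamentally from the paper's, and it has several genuine gaps.

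The paper's proof constructs a $k[G]$-comodule $W$ (by descending a simple comodule of dimension $\geq 2$ from $\overline{k}$, which exists since $G_{\overline{k}}$ is not diagonalizable) such that $\Pp(W)^G=\varnothing$. Then \rref{prop:conc_Borel-type} gives $A^{*,*}_G(\Pp(W))[\eu_G^{-1}]=0$, and the projective bundle theorem exhibits $A^{*,*}_G(\Pp(W))$ as a \emph{free} $A^{*,*}_G(S)$-module of positive rank, so $A^{*,*}_G(S)[\eu_G^{-1}]=0$. Freeness is the crucial point: it converts the vanishing of a module into vanishing of the base ring without any injectivity hypothesis.

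Your route has three problems. First, the base change to $\overline{k}$ is not ``harmless by descent'': you need an element of $\Vc_G$ over $k$ whose class in $A^{*,*}_G(\Spec k)$ vanishes, and the trace-free endomorphism bundle of an absolutely irreducible $W$ lives a priori over $\overline{k}$; you would have to descend it, and the vanishing of the base-changed ring does not propagate back (the map $A^{*,*}_G(S)[\eu_G^{-1}]\to A^{*,*}_{G_{\overline{k}}}(\Spec\overline{k})[\eu_{G_{\overline{k}}}^{-1}]$ need not be injective). Second, even granting all this, the claimed injectivity of the restriction $A^{*,*}_G\to A^{*,*}_T$ is a splitting principle that holds for special groups or rationally, but is not available for an arbitrary $\GL$-oriented commutative ring spectrum and is nowhere established in the paper; the same issue undermines your $G/T$ reformulation, since $A^{*,*}_G(G/T)\simeq A^{*,*}_T(S)$ need not be free over $A^{*,*}_G(S)$ for a general oriented theory. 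Third, and most seriously, your fallback for the non-connected case (which in positive characteristic is in fact the \emph{generic} case, since over $\overline{k}$ a linearly reductive group has $G^0$ a torus) does not work: applying \rref{th:int_conc_geom} to an affine approximation of $EG$ uses only that the $G$-action is free, so $E_mG^G=\varnothing$, and this holds for \emph{every} $G$ including tori. Thus the argument cannot detect the dichotomy ``multiplicative type versus not'', and the representation $V$ produced depends on $m$, giving no single class in the limit $A^{*,*}_G(S)$. The discriminating feature exploited by the paper is precisely the existence, for non-multiplicative-type $G$, of a nonzero $W$ with $\Pp(W)^G=\varnothing$; for multiplicative type any nonzero $W$ contains a character, so $\Pp(W)^G\neq\varnothing$.
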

This implies that the practical applications of Theorem~\ref{th:int_conc_Borel} for $\GL$-oriented theories will be limited to groups $G$ which are of multiplicative type (most previously known forms of the concentration theorem are restricted to diagonalizable groups, or even tori). Note however that making this observation does require the general form of Theorem~\ref{th:int_conc_Borel} (the proof of Proposition~\ref{prop:int_GL} actually uses the validity of Theorem~\ref{th:int_conc_Borel} for $G$ arbitrary).

At the other extreme, when $A$ is an $\eta$-periodic theory with a pseudo-orientation, the localization $A^{*,*}_G(S)[E_G^{-1}]$ is nonzero only for those groups $G$ admitting no odd-rank vector bundle $V \to S$ such that $V^G=0$, a class which contains no nontrivial diagonalizable group (see \rref{rem:eta}). This makes the concrete applications beyond the $\GL$-oriented case quite limited at the moment. Let us mention that Levine's approach in \cite{Levine-Atiyah-Bott} does apply to such theories, but in return requires to make rather restrictive assumptions on the orbit types of the action.\\

We conclude the paper with an application of the concentration theorem to motivic cohomology with finite coefficients, obtaining in \S\ref{sect:Smith} an algebraic version of Dwyer--Wilkerson result in topology \cite{Dwyer-Wilkerson}. In brief, it expresses the non-equivariant cohomology of the fixed locus purely in terms of the equivariant cohomology of the ambient scheme together with its structure as a module over the Steenrod algebra. 

To be more precise, let $p$ be a prime number and assume that the base scheme $S$ is the spectrum of a perfect field of characteristic unequal to $p$. We denote by $\Hh^{*,*}$ the motivic cohomology with $\Fp$-coefficients, and by $\Hh_G^{*,*}$ the corresponding Borel-type equivariant theory. These are equipped with an action of the modulo $p$ motivic Steenrod algebra $\Ac$. When $N$ is an $\Ac$-module, we denote by $\Uns(N) \subset N$ the set of unstable elements (those elements on which operations of sufficiently high degree vanish). We prove:
\begin{thm}[see \rref{cor:Smith}]
Let $G=(\mu_p)^n$ for some $n \in \Nn$, and $X$ a smooth $G$-quasi-projective $S$-scheme. Then we have a natural isomorphism
\[
\Hh^{*,*}(X^G) \simeq \Hh^{*,*}(S) \otimes_{\Hh^{*,*}_G(S)} \Uns(\Hh^{*,*}_G(X)[\eu_G^{-1}]).
\]
\end{thm}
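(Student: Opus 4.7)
The plan is to follow the Dwyer--Wilkerson blueprint \cite{Dwyer-Wilkerson}, extracting the statement from the concentration theorem of this paper combined with an algebraic identity for the Borel-type cohomology of $BG$. Since motivic cohomology $\Hh \in \SH(S)$ is $\GL$-oriented, it inherits a pseudo-orientation with $b(V)=\eu(V)$ the Euler class, so Theorem~\ref{th:int_conc_Borel} produces an isomorphism $i^*\colon \Hh^{*,*}_G(X)[\eu_G^{-1}] \xrightarrow{\sim} \Hh^{*,*}_G(X^G)[\eu_G^{-1}]$. This isomorphism is $\Ac$-linear because the motivic Steenrod operations are natural in morphisms of schemes and pass through the limit defining the Borel construction.

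Next, since $G$ acts trivially on $X^G$, the approximation quotients satisfy $(X^G \times_S E_m G)/G = X^G \times_S B_m G$, so Borel cohomology reduces to ordinary motivic cohomology of a product. For $G=(\mu_p)^n$ the ring $\Hh^{*,*}_G(S)$ admits an explicit presentation (polynomial, tensored with an exterior algebra when $p$ is odd) over $\Hh^{*,*}(S)$ on Chern classes and Bocksteins of the tautological characters, and is in particular free as an $\Hh^{*,*}(S)$-module in each bidegree. A K\"unneth argument, combined with the fact that $X^G$ is smooth, then yields an $\Ac$-equivariant isomorphism $\Hh^{*,*}_G(X^G) \cong \Hh^{*,*}(X^G) \otimes_{\Hh^{*,*}(S)} \Hh^{*,*}_G(S)$ which survives localization at $\eu_G$.

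The third step is to pass to the unstable part. Because $\Hh^{*,*}(X^G)$ is unstable and the tensor product is taken over the stable subring $\Hh^{*,*}(S)$ (acting by multiplication in both factors), one verifies that $\Uns$ commutes with this flat extension of scalars, giving
\[
\Uns\bigl(\Hh^{*,*}_G(X)[\eu_G^{-1}]\bigr) \cong \Hh^{*,*}(X^G) \otimes_{\Hh^{*,*}(S)} \Uns\bigl(\Hh^{*,*}_G(S)[\eu_G^{-1}]\bigr).
\]
The heart of the proof is then the computation $\Uns(\Hh^{*,*}_G(S)[\eu_G^{-1}])=\Hh^{*,*}_G(S)$, a direct motivic analog of Dwyer--Wilkerson's identity for $BV$ with $V$ elementary abelian. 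One inclusion is clear; for the reverse, the motivic Cartan formula $P^i c^n = \binom{n}{i} c^{n+i(p-1)}$ (extended to the exterior generators via the Bockstein) on the generators $c$ of $\Hh^{*,*}_G(S)$ shows that negative powers of a generator admit nonzero Steenrod images in arbitrarily high degree, so any element with a pole in some Euler class violates the unstability condition.

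Granting this, the conclusion follows by tensoring with $\Hh^{*,*}(S)$ over $\Hh^{*,*}_G(S)$: the augmentation collapses the second tensor factor to $\Hh^{*,*}(S)$ and leaves $\Hh^{*,*}(X^G)$ intact. The main obstacle I anticipate is the unstability computation above, since handling the bigrading, the exterior (Bockstein) summand for odd $p$, and the precise form of the unstability condition in Voevodsky's motivic Steenrod algebra all require care — although the topological argument of \cite{Dwyer-Wilkerson} serves as a reliable guide.
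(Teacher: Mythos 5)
Your proposal follows the same Dwyer--Wilkerson blueprint as the paper: apply the Borel-type concentration theorem to identify $\Hh^{*,*}_G(X)[\eu_G^{-1}]$ with $\Hh^{*,*}_G(X^G)[\eu_G^{-1}]$, use a K\"unneth decomposition of $\Hh^{*,*}_G(X^G)$, and show that passage to the unstable part recovers $\Hh^{*,*}_G(X^G)$. The concentration step and the K\"unneth step match the paper's \rref{prop:conc_Borel-type} and \rref{lemm:Smith}--\rref{lemm:Smith:2}, and the role of $\St^1(v)=v^p$ (and $\St^i(v^{-1})=(-1)^iv^{i(p-1)-1}$) in ruling out poles is exactly what the paper exploits in \rref{lemm:P_v_-1}--\rref{lemm:unstable_mup}.

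However, the organization of your third step misplaces the difficulty. You split the unstable-part computation into (a) a ``commutation of $\Uns$ with the flat extension $\Hh^{*,*}(X^G)\otimes_{\Hh^{*,*}(S)}-$'' and (b) the ``heart'' $\Uns(\Hh^{*,*}_G(S)[\eu_G^{-1}])=\Hh^{*,*}_G(S)$. The commutation (a) is \emph{not} a routine verification: for a general unstable $\Ac$-algebra $B$ over $A$ and a localization $M$, the inclusion $\Uns(B\otimes_A M)\supset B\otimes_A\Uns(M)$ is clear from the Cartan formula, but the reverse inclusion is exactly where the work is, and requires the same kind of analysis (using the free $\Hh^{*,*}(S)$-module structure of $\Hh^{*,*}_{\mu_p}(S)$ on the basis $v^j, v^ju$, and the explicit Steenrod action) that you implicitly put into step (b). The paper sidesteps this by not splitting the two: Lemma~\rref{lemm:unstable_mup} carries along an arbitrary unstable $\Ac$-algebra $R$ as a tensor factor and proves directly that $\Hh^{*,*}_{\mu_p}(S)\otimes_{\Hh^{*,*}(S)}R=\Uns\bigl((\Hh^{*,*}_{\mu_p}(S)\otimes_{\Hh^{*,*}(S)}R)[v^{-1}]\bigr)$, handling both the localization and the auxiliary factor in one argument. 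One then iterates over the $\mu_p$-factors (Lemma~\rref{lemm:Smith:2}, Proposition~\rref{prop:Smith}), applying the result to $X^G$ rather than to $S$. Your step (b) is recovered as the special case $R=\Hh^{*,*}(S)$, $X=S$, and is not the heart of the matter; the genuine content lies in (a), which your sketch asserts rather than proves.

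So: same route, correct key ingredients, but the ``commutation'' in your third step needs a real argument, and that argument is essentially \rref{lemm:unstable_mup} re-discovered. Once you carry an auxiliary unstable $R$ through the single-$\mu_p$ case and then induct over factors, the proof closes.
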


In the whole paper, instead of the closed subscheme $X^G \subset X$ we will usually consider the fixed locus $X^C$, where $C \subset G$ is a closed normal subgroup which is linearly reductive. This leads to slight generalizations of all the results stated above; in this introduction we have limited ourselves to the case $C=G$ for the sake of simplicity.\\

\noindent \textbf{Acknowledgments}.
I am grateful to the anonymous referee for the suggestions, which helped improve the presentation.

\section{Group schemes and their actions}
\numberwithin{theorem}{subsection}
\numberwithin{lemma}{subsection}
\numberwithin{proposition}{subsection}
\numberwithin{corollary}{subsection}
\numberwithin{example}{subsection}
\numberwithin{notation}{subsection}
\numberwithin{definition}{subsection}
\numberwithin{remark}{subsection}

In this paper, we work over a noetherian base scheme $S$. A group scheme over $S$ will mean a flat, finite type, affine group scheme over $S$. For an $S$-scheme $X$ we denote by $\pi_X \colon X \to S$ its structure morphism.

\subsection{The fixed subsheaf}
In this section, we let $G$ be a group scheme over $S$.
\begin{para}
\label{p:group}
The $S$-scheme $G$ is the relative spectrum of a unique quasi-coherent $\Oc_S$-algebra that we denote by $\Oc_S[G]$. The $S$-scheme structure yields morphisms of $\Oc_S$-algebras
\[
\Oc_S \xrightarrow{\iota} \Oc_S[G], \quad \Oc_S[G] \otimes_{\Oc_S} \Oc_S[G] \to \Oc_S[G],
\]
and the group structure yields morphisms of $\Oc_S$-algebras
\begin{equation}
\label{eq:Os_G_group}
\Oc_S[G] \xrightarrow{\varepsilon} \Oc_S, \quad \Oc_S[G] \xrightarrow{\chi_G} \Oc_S[G], \quad \Oc_S[G] \xrightarrow{\mu_G} \Oc_S[G]\otimes_{\Oc_S} \Oc_S[G],
\end{equation}
satisfying the axioms of a Hopf algebra. 
\end{para}

\begin{para}
\label{p:def_equiv_module}
In the notation of \rref{p:group}, a $G$-equivariant quasi-coherent $\Oc_S$-module is a quasi-coherent $\Oc_S$-module $\Fc$ together with a morphism of $\Oc_S$-modules
\[
\mu_{\Fc} \colon \Fc \to \Oc_S[G] \otimes_{\Oc_S} \Fc
\]
(the ``coaction morphism''), such that:
\begin{enumerate}[(i)]
\item 
\label{p:def_equiv_module:1}
the identity of $\Fc$ factors as
\[
\Fc \xrightarrow{\mu_{\Fc}}  \Oc_S[G] \otimes_{\Oc_S} \Fc \xrightarrow{\varepsilon \otimes \id_{\Fc}} \Oc_S \otimes_{\Oc_S} \Fc = \Fc,
\]

\item
\label{p:def_equiv_module:2}
the following composites coincide:
\[
\Fc \xrightarrow{\mu_{\Fc}} \Oc_S[G] \otimes_{\Oc_S} \Fc \doublerightarrow{(\id_{\Oc_S[G]}) \otimes \mu_{\Fc}}{\mu_G \otimes \id_{\Fc}}  \Oc_S[G] \otimes_{\Oc_S} \Oc_S[G] \otimes_{\Oc_S} \Fc.
\]
\end{enumerate}
A morphism of $G$-equivariant quasi-coherent $\Oc_S$-modules is a morphism of $\Oc_S$-modules compatible with the coaction morphisms. We thus obtain a category that we denote by $\Qcoh^G(S)$. The category of quasi-coherent $\Oc_S$-modules will be denoted by $\Qcoh(S)$.
\end{para}

\begin{para}
Let $\Fc \in \Qcoh^G(S)$, and $\Gc \subset \Fc$ a quasi-coherent $\Oc_S$-submodule. Since $G$ is flat over $S$, it follows that $\Gc$ admits at most one $G$-equivariant structure compatible with the inclusion $\Gc \subset \Fc$. If such a structure exists, we will say that the submodule $\Gc$ is \emph{$G$-invariant}.
\end{para}

\begin{para}
\label{p:tensor_equiv}
For $\Fc,\Gc \in \Qcoh^G(S)$, we will view the tensor product $\Fc \otimes_{\Oc_S} \Gc$ as an object of $\Qcoh^G(S)$, via the coaction morphism
\[
\Fc \otimes_{\Oc_S} \Gc \xrightarrow{\mu_\Fc \otimes \mu_\Gc} \Oc_S[G] \otimes_{\Oc_S} \Fc \otimes_{\Oc_S}  \Oc_S[G] \otimes_{\Oc_S} \Gc \to \Oc_S[G] \otimes_{\Oc_S} \Fc \otimes_{\Oc_S} \Gc,
\]
where the last morphism is given locally by $a \otimes b \otimes c \otimes d \mapsto ac\otimes b \otimes d$.
\end{para}

\begin{definition}
\label{def:fixed_subsheaf}
Let $\Fc$ be a $G$-equivariant quasi-coherent $\Oc_S$-module. The fixed subsheaf of $\Fc$ is defined as
\[
\Fc^G = \ker( \Fc \xrightarrow{1 \otimes \id_{\Fc} -\mu_{\Fc}} \Oc_S[G] \otimes_{\Oc_S} \Fc),
\]
where $1 \otimes \id_{\Fc}$ denotes the morphism  $\Fc = \Oc_S \otimes_{\Oc_S} \Fc \xrightarrow{\iota \otimes \id_\Fc} \Oc_S[G] \otimes_{\Oc_S} \Fc$. This permits to define a functor $(-)^G \colon \Qcoh^G(S) \to \Qcoh(S)$.
\end{definition}

\begin{para}
\label{p:fp_adjoint}
The functor $(-)^G \colon \Qcoh^G(S) \to \Qcoh(S)$ is right adjoint to the functor $\Qcoh(S) \to \Qcoh^G(S)$ which endows an $\Oc_S$-module with the trivial $G$-action.
\end{para}

\begin{lemma}
\label{lemm:inv_is_equ}
Let $C \subset G$ be a closed normal subgroup, and $\Fc \in \Qcoh^G(S)$. Then the submodule $\Fc^C \subset \Fc$ is $G$-invariant.
\end{lemma}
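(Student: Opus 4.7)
The plan is to show that the coaction $\mu_\Fc$ restricts to a morphism $\Fc^C \to \Oc_S[G] \otimes_{\Oc_S} \Fc^C$; this automatically provides the required $G$-equivariant structure on the submodule. Let $q \colon \Oc_S[G] \to \Oc_S[C]$ denote the Hopf algebra quotient induced by the closed immersion $C \hookrightarrow G$, so that by definition $\Fc^C$ is the kernel of $1 \otimes \id_\Fc - (q \otimes \id_\Fc)\mu_\Fc \colon \Fc \to \Oc_S[C] \otimes_{\Oc_S} \Fc$. Since $\Oc_S[G]$ is flat over $\Oc_S$, tensoring preserves kernels, and the problem reduces to showing that the two composites
\[
\alpha_1 = (\id \otimes 1 \otimes \id) \circ \mu_\Fc, \qquad \alpha_2 = (\id \otimes (q \otimes \id)\mu_\Fc) \circ \mu_\Fc
\]
from $\Fc$ to $\Oc_S[G] \otimes_{\Oc_S} \Oc_S[C] \otimes_{\Oc_S} \Fc$ agree on $\Fc^C$.

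Normality of $C$ in $G$ delivers an $S$-scheme isomorphism $\Psi \colon G \times_S C \xrightarrow{\sim} C \times_S G$ given on points by $(h, c) \mapsto (hch^{-1}, h)$. I will extract two identities from it. The factorisation of the multiplication $G \times_S C \to G$ as $\Psi$ followed by multiplication $C \times_S G \to G$ dualises to
\[
(\id_{\Oc_S[G]} \otimes q) \circ \mu_G = \Psi^* \circ (q \otimes \id_{\Oc_S[G]}) \circ \mu_G,
\]
and the equality of the first projection $G \times_S C \to G$ with the composite of $\Psi$ and the second projection $C \times_S G \to G$ yields $\Psi^*(1 \otimes a) = a \otimes 1$ for every $a \in \Oc_S[G]$.

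Combining these with coassociativity of $\mu_\Fc$, I rewrite
\[
\alpha_2 = \bigl((\id \otimes q)\mu_G \otimes \id_\Fc\bigr) \circ \mu_\Fc = (\Psi^* \otimes \id_\Fc) \circ (\id_{\Oc_S[C]} \otimes \mu_\Fc) \circ (q \otimes \id_\Fc) \circ \mu_\Fc.
\]
For $x \in \Fc^C$ the innermost expression evaluates to $(q \otimes \id_\Fc)\mu_\Fc(x) = 1 \otimes x$; then $\id \otimes \mu_\Fc$ turns this into $1 \otimes \mu_\Fc(x) \in \Oc_S[C] \otimes_{\Oc_S} \Oc_S[G] \otimes_{\Oc_S} \Fc$, and the identity $\Psi^*(1 \otimes a) = a \otimes 1$ converts $(\Psi^* \otimes \id_\Fc)(1 \otimes \mu_\Fc(x))$ into $\alpha_1(x)$. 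The main obstacle is isolating the correct Hopf-algebraic avatar of normality, namely the map $\Psi^*$ together with its two properties above; once these are in hand, the remaining verification is a short diagram chase using coassociativity and the defining equation of $\Fc^C$.
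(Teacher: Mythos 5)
Your proof is correct, and it takes a genuinely different route from the one in the paper. The paper first equips $\Oc_S[G]$ (and, via normality, $\Oc_S[C]$) with the $G$-equivariant structure given by conjugation $(g,h)\mapsto ghg^{-1}$, then shows that the $G$-coaction $\mu_\Fc \colon \Fc \to \Oc_S[G]\otimes_{\Oc_S}\Fc$ is itself a morphism in $\Qcoh^G(S)$ for this structure; composing with the ($G$-equivariant) surjection $\Oc_S[G]\to\Oc_S[C]$ exhibits $\Fc^C$ as the kernel of a difference of two $G$-equivariant maps, hence a $G$-submodule. You instead verify directly that $\mu_\Fc(\Fc^C)\subset\Oc_S[G]\otimes_{\Oc_S}\Fc^C$, using flatness of $\Oc_S[G]$ to identify the target with a kernel and then chasing elements. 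Both arguments rest on the same geometric encoding of normality --- the factorisation of multiplication $G\times_S C\to G$ through the twist $(h,c)\mapsto (hch^{-1},h)$, which the paper packages as the identity $\mu_G = m\circ(c_G\otimes\id)\circ\mu_G$ and you package via $\Psi^*$ --- so the essential input is identical. The paper's formulation has the modest advantage of isolating a reusable structural statement ($\mu_\Fc$ is $G$-equivariant for the conjugation action on $\Oc_S[G]$), while yours is more self-contained and avoids introducing an auxiliary $G$-equivariant structure on $\Oc_S[G]$; neither has a real edge in length or difficulty. One tiny presentational caveat: when you say ``the innermost expression evaluates to $1\otimes x$'' and then apply $\id\otimes\mu_\Fc$, you are doing a pointwise/local computation, which is fine over affines and globalises since everything is a statement about morphisms of quasi-coherent sheaves; it would be marginally cleaner to phrase the final step as an equality of morphisms, as the paper does, but the argument is sound as written.
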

\begin{proof}
Let us view $\Oc_S[G]$ as a $G$-equivariant quasi-coherent $\Oc_S$-module via the coaction morphism
\[
c_G \colon \Oc_S[G] \to \Oc_S[G] \otimes_{\Oc_S} \Oc_S[G] \; \text{corresponding to} \; G \times_S G \to G, (g,h) \mapsto ghg^{-1}.
\]
Since $C \subset G$ is normal, this structure descends to a $G$-equivariant structure on $\Oc_S[C]$, which makes the surjection $\Oc_S[G] \to \Oc_S[C]$ a $G$-equivariant morphism.

To prove the lemma, it will suffice to show that the $C$-coaction morphism $\Fc \to \Oc_S[C] \otimes_{\Oc_S} \Fc$ is $G$-equivariant. Since $\Oc_S[G] \to \Oc_S[C]$ is $G$-equivariant, it will suffice to show that the $G$-coaction morphism $\mu_\Fc \colon \Fc \to \Oc_S[G] \otimes_{\Oc_S} \Fc$ is $G$-equivariant.

Set $\Ac = \Oc_S[G]$, and consider the morphism of $\Oc_S$-modules
\[
m \colon \Ac \otimes_{\Oc_S} \Ac \otimes_{\Oc_S} \Ac \to \Ac \otimes_{\Oc_S} \Ac, \quad a\otimes b \otimes c \mapsto ac \otimes b.
\]
Let us first observe that the composite morphism of $S$-schemes
\[
G \times_S G \xrightarrow{(g,h) \mapsto (g,h,g)} G \times_S G \times_S G \xrightarrow{(g,h,i) \mapsto (ghg^{-1},i)} G \times_S G \xrightarrow{(g,h) \mapsto gh} G
\]
is given by $(g,h) \mapsto gh$. In other words we have an equality of morphisms of $\Oc_S$-modules
\begin{equation}
\label{eq:mu_c_m=mu}
\mu_G = m \circ (c_G \otimes \id_{\Ac}) \circ \mu_G \colon \Ac \to \Ac \otimes_{\Oc_S} \Ac.
\end{equation}
We can now compute, as morphisms $\Fc \to \Ac \otimes_{\Oc_S} \Ac \otimes_{\Oc_S} \Fc$,
\begin{align*}
(\mu_{\Ac \otimes_{\Oc_S} \Fc}) \circ \mu_{\Fc}
&= (m \otimes \id_{\Fc}) \circ (c_G \otimes \id_{\Ac \otimes_{\Oc_S} \Fc}) \circ (\id_{\Ac} \otimes \mu_\Fc) \circ \mu_{\Fc}&&\text{by \rref{p:tensor_equiv}}\\ 
&= (m \otimes \id_{\Fc}) \circ (c_G \otimes \id_{\Ac \otimes_{\Oc_S} \Fc}) \circ (\mu_G \otimes \id_\Fc) \circ \mu_{\Fc} && \text{by \dref{p:def_equiv_module}{p:def_equiv_module:2}}\\
&= \big(\big(m \circ (c_G \otimes \id_{\Ac}) \circ \mu_G\big) \otimes \id_{\Fc}\big) \circ \mu_{\Fc}\\
&= (\mu_G \otimes \id_{\Fc}) \circ \mu_{\Fc} && \text{by \eqref{eq:mu_c_m=mu}}\\
&= (\id_{\Ac} \otimes \mu_{\Fc}) \circ \mu_{\Fc}&& \text{by \dref{p:def_equiv_module}{p:def_equiv_module:2}},
\end{align*}
which shows that $\mu_{\Fc} \colon \Fc \to \Ac \otimes_{\Oc_S} \Fc$ is $G$-equivariant.
\end{proof}

\begin{para}
When $T \to S$ is a scheme morphism with $T$ noetherian, we denote by $G_T=G \times_S T$ the $T$-group scheme obtained by base change. We will often write $\Qcoh^G(T)$ instead of $\Qcoh^{G_T}(T)$, and  $\Fc^G$ instead of $\Fc^{G_T}$ when $\Fc \in \Qcoh^{G_T}(T)$ (see also \rref{p:alt_def_equiv_modules}).
\end{para}

\begin{example}
\label{ex:OS_G}
The group operation in $G$ yields a structure of $G$-equivariant quasi-coherent $\Oc_S$-module on $\Oc_S[G]$, where the coaction morphism is given by the morphism $\mu_G$ of \eqref{eq:Os_G_group}. We claim that
\begin{equation}
\label{eq:fixed_Oc_S_G}
(\Oc_S[G])^G=\Oc_S.
\end{equation}
Indeed set $\Ac=\Oc_S[G]$. Since $\mu_G \circ \iota = (1 \otimes \id_{\Ac}) \circ \iota$, we have $\Oc_S \subset \Ac^G$. Conversely the commutative diagram
\[
\xymatrix{
\Ac \ar[rrr]_\varepsilon \ar[d]_{1 \otimes \id_{\Ac}} &&& \Oc_S\ar[d]^\iota\\
\Ac \otimes_{\Oc_S} \Ac \ar[rr]^-{\id_\Ac \otimes \varepsilon} && \Ac \otimes_{\Oc_S} \Oc_S \ar@{=}[r]& \Ac\\
\Ac \ar[u]^{\mu_G} \ar@/_1pc/[rrru]_{\id_\Ac}
}
\]
shows that
\[
\Ac^G = \ker(1 \otimes \id_\Ac - \mu_G) \subset \ker((\iota \circ \varepsilon) - \id_\Ac)=\Oc_S.
\]
\end{example}

\begin{para}
When $k$ is a noetherian ring and $S=\Spec k$, we will write $k[G]$ instead of $\Oc_S[G]$, and view it as a Hopf algebra over $k$. The notion of $G$-equivariant quasi-coherent $\Oc_S$-module corresponds to that of $k[G]$-comodule. In particular given a $k[G]$-comodule $M$, its fixed $k$-submodule $M^G \subset M$ is defined as in \rref{def:fixed_subsheaf}.
\end{para}

\begin{para}
\label{p:pf_equiv}
Let $f \colon T \to S$ be a scheme morphism with $T$ noetherian, and let $\Fc \in \Qcoh^G(T)$. It follows from the flat base change theorem \cite[Tag~\href{https://stacks.math.columbia.edu/tag/02KH}{02KH}]{stacks} that the morphism of $\Oc_S$-modules $u \colon \Oc_S[G] \otimes_{\Oc_S} f_*\Fc \to f_*(\Oc_T[G_T] \otimes_{\Oc_T} \Fc)$ is an isomorphism. We may thus define a coaction morphism $\mu_{f_*\Fc}\colon f_*\Fc \to \Oc_S[G] \otimes_{\Oc_S} f_*\Fc$ by the condition $u \circ \mu_{f_*\Fc} = f_*(\mu_{\Fc})$. This makes $f_*\Fc$ a  $G$-equivariant quasi-coherent $\Oc_S$-module, which yields a functor $f_* \colon \Qcoh^G(T) \to \Qcoh^G(S)$.
\end{para}

\begin{para}
\label{p:pf_fixed}
In the situation of \rref{p:pf_equiv}, as $u$ is an isomorphism and $f_*$ is left exact, we have
\[
(f_*\Fc)^G = \ker(\mu_{f_*\Fc}) = \ker(u \circ \mu_{f_*\Fc}) = \ker(f_*(\mu_{\Fc})) = f_* \ker(\mu_{\Fc}) = f_*(\Fc^G).
\]
\end{para}

\subsection{Linearly reductive groups}
We continue to assume that $G$ is a group scheme over $S$.
\begin{definition}
\label{def:lin_red}
We say that $G$ is \emph{linearly reductive} if the morphism $\iota \colon \Oc_S \to \Oc_S[G]$ admits a retraction $\rho \colon \Oc_S[G] \to \Oc_S$ in $\Qcoh^G(S)$.
\end{definition}

\begin{para}
\label{p:lin_red_field}
When $S$ is the spectrum of a field, the group $G$ is linearly reductive if and only if the abelian category of $G$-equivariant coherent $\Oc_S$-modules is semisimple (see \cite[Proposition~3.1]{Margaux-red}).
\end{para}

\begin{para}
\label{p:bc_reductive}
If $T \to S$ is a morphism with $T$ noetherian, and $G$ is a linearly reductive group scheme over $S$, then the group scheme $G_T=G \times_S T$ is linearly reductive over $T$.
\end{para}

\begin{para}
\label{p:gamma}
Let $\Fc \in \Qcoh^G(S)$. We will consider the composite in $\Qcoh(S)$
\[
\gamma_\Fc \colon \Fc \xrightarrow{\mu_{\Fc}} \Oc_S[G] \otimes_{\Oc_S} \Fc \xrightarrow{\chi_G \otimes \id_{\Fc}} \Oc_S[G] \otimes_{\Oc_S} \Fc.
\]
(Recall that $\chi_G$ is the antipode of $G$, and $\mu_{\Fc}$ the coaction morphism of $\Fc$.)
\end{para}

\begin{lemma}
\label{lemm:gamma_inv}
Let $\Fc \in \Qcoh^G(S)$. We let $G$ act on $\Oc_S[G]$ via the group operation of $G$ (as in \rref{ex:OS_G}), and on $\Oc_S[G] \otimes_{\Oc_S} \Fc$ as described in \rref{p:tensor_equiv}. Then
\[
\gamma_{\Fc}(\Fc) \subset (\Oc_S[G] \otimes_{\Oc_S} \Fc)^G.
\]
\end{lemma}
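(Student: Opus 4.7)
Set $\Ac = \Oc_S[G]$. The lemma asks that $\big((1 \otimes \id_{\Ac \otimes \Fc}) - \mu_{\Ac \otimes \Fc}\big) \circ \gamma_\Fc = 0$, where $\mu_{\Ac \otimes \Fc}$ is the tensor-product coaction from \rref{p:tensor_equiv} built out of $\mu_G$ (on $\Ac$, as in \rref{ex:OS_G}) and $\mu_\Fc$. I plan a direct Sweedler-style verification (equivalently, a single commutative diagram in the spirit of \eqref{eq:mu_c_m=mu} in the proof of \rref{lemm:inv_is_equ}), using three ingredients: anti-comultiplicativity of the antipode in the form $\mu_G(\chi_G(a)) = \chi_G(a_{(2)}) \otimes \chi_G(a_{(1)})$, the antipode axiom $\chi_G(a_{(1)}) \cdot a_{(2)} = \iota(\varepsilon(a))$, and the two comodule axioms \dref{p:def_equiv_module}{p:def_equiv_module:1}--\dref{p:def_equiv_module}{p:def_equiv_module:2}.

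\textbf{Key steps.} Writing $\mu_\Fc(f) = f_{(-1)} \otimes f_{(0)}$ and $\mu_G(a) = a_{(1)} \otimes a_{(2)}$, the recipe of \rref{p:tensor_equiv} sends $a \otimes g \mapsto a_{(1)} g_{(-1)} \otimes a_{(2)} \otimes g_{(0)}$. Feeding in $\gamma_\Fc(f) = \chi_G(f_{(-1)}) \otimes f_{(0)}$ and invoking anti-comultiplicativity of $\chi_G$ yields
\[
\mu_{\Ac \otimes \Fc}\big(\gamma_\Fc(f)\big) = \chi_G(f_{(-1)(2)}) \, f_{(0)(-1)} \otimes \chi_G(f_{(-1)(1)}) \otimes f_{(0)(0)}.
\]
Two uses of coassociativity \dref{p:def_equiv_module}{p:def_equiv_module:2}, one applied to $f$ and one to $f_{(0)}$, give the identity
\[
f_{(-1)(1)} \otimes f_{(-1)(2)} \otimes f_{(0)(-1)} \otimes f_{(0)(0)} = f_{(-1)} \otimes (f_{(0)(-1)})_{(1)} \otimes (f_{(0)(-1)})_{(2)} \otimes f_{(0)(0)}
\]
in $\Ac^{\otimes 3} \otimes \Fc$. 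Applying the $\Oc_S$-linear map $a \otimes b \otimes c \otimes d \mapsto \chi_G(b)c \otimes \chi_G(a) \otimes d$ to both sides (the left-hand side reproduces the previous display), the right side becomes
\[
\chi_G\big((f_{(0)(-1)})_{(1)}\big) \cdot (f_{(0)(-1)})_{(2)} \otimes \chi_G(f_{(-1)}) \otimes f_{(0)(0)}.
\]
The first tensor slot is an instance of $\chi_G(x_{(1)}) x_{(2)}$ with $x = f_{(0)(-1)}$ and collapses by the antipode axiom to $\varepsilon(f_{(0)(-1)}) \cdot 1_\Ac$. Pushing this scalar through to the last factor and using the counit axiom \dref{p:def_equiv_module}{p:def_equiv_module:1} in the form $\varepsilon(f_{(0)(-1)}) f_{(0)(0)} = f_{(0)}$, the expression simplifies to $1_\Ac \otimes \chi_G(f_{(-1)}) \otimes f_{(0)} = 1 \otimes \gamma_\Fc(f)$, which is the required invariance.

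\textbf{Main obstacle.} The calculation itself is routine; the only real hurdle is bookkeeping, namely choosing the right order in which to expand the three iterated coactions in $\Ac^{\otimes 3} \otimes \Fc$ so that the antipode terms rearrange into the pattern $\chi_G(x_{(1)}) x_{(2)}$ amenable to the antipode axiom. A diagrammatic write-up modeled on \eqref{eq:mu_c_m=mu} (collapsing the argument into a single equality of morphisms $\Fc \to \Ac^{\otimes 2} \otimes \Fc$) would likely be cleaner than the Sweedler form above, but the mathematical content is identical, and no hypotheses on $G$ (such as linear reductivity) are required.
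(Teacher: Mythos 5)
Your proof is correct and takes essentially the same route as the paper: the paper bundles the anti-comultiplicativity of $\chi_G$ together with the antipode and counit axioms into the single Hopf identity \eqref{eq:m_chi}, which it verifies by a scheme-level computation on $G\times_S G$, and then pushes it through the comodule structure exactly as you do. The only real difference is cosmetic, namely your use of Sweedler notation versus the paper's morphism-level diagram chase.
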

\begin{proof}
In this proof, we will write $\times$ instead of $\times_S$, and $\otimes$ instead of $\otimes_{\Oc_S}$. Set $\Ac = \Oc_S[G]$, and consider the morphism of $\Oc_S$-modules
\[
m \colon \Ac \otimes \Ac \otimes \Ac \to \Ac \otimes \Ac, \quad a\otimes b \otimes c \mapsto ac \otimes b.
\]
Let us first observe that the composite morphisms of $S$-schemes
\[
G \times G \xrightarrow{(g,h) \mapsto (g,h,g)} G \times G \times G \xrightarrow{(g,h,i) \mapsto (gh,i)} G \times G \xrightarrow{(g,h) \mapsto (g^{-1},h)} G \times G \xrightarrow{(g,h) \mapsto gh} G
\]
and
\[
G \times G \xrightarrow{(g,h) \mapsto h} G \xrightarrow{g \mapsto g^{-1}} G
\]
are both given by $(g,h) \mapsto h^{-1}$ and thus coincide. In other words, we have an equality of morphisms of $\Oc_S$-modules $\Ac \to \Ac \otimes \Ac$
\begin{equation}
\label{eq:m_chi}
m \circ (\mu_G \otimes \id_\Ac) \circ (\chi_G \otimes \id_\Ac) \circ \mu_G = (1 \otimes \id_\Ac) \circ \chi_G.
\end{equation}
Using the relation $\mu_\Fc \circ (\id_\Ac \otimes \mu_\Fc) = \mu_\Fc \circ (\mu_G \otimes \id_{\Fc})$ (see \dref{p:def_equiv_module}{p:def_equiv_module:2}), we compute
\begin{align*}
\mu_{\Ac \otimes \Fc} \circ \gamma_\Fc
&=(m \otimes \id_{\Fc}) \circ (\mu_G \otimes \mu_\Fc) \circ (\chi_G \otimes \id_{\Fc}) \circ \mu_{\Fc} \\
&=(m \otimes \id_{\Fc}) \circ (\mu_G \otimes \id_{\Ac \otimes \Fc}) \circ (\chi_G \otimes \id_{\Ac \otimes \Fc})\circ (\id_\Ac \otimes \mu_\Fc) \circ \mu_{\Fc} \\ 
&= (m \otimes \id_{\Fc}) \circ (\mu_G \otimes \id_{\Ac \otimes \Fc}) \circ (\chi_G \otimes \id_{\Ac\otimes\Fc})\circ (\mu_G \otimes \id_{\Fc}) \circ \mu_{\Fc} \\
&= \big(\big(m \circ (\mu_G \otimes \id_\Ac) \circ (\chi_G \otimes \id_\Ac) \circ \mu_G\big) \otimes \id_{\Fc}\big) \circ \mu_\Fc\\
&= \big(\big((1 \otimes \id_\Ac) \circ \chi_G\big) \otimes \id_\Fc\big) \circ \mu_\Fc, \quad \quad \text{by \eqref{eq:m_chi}}\\
&= (1 \otimes \id_{\Ac \otimes \Fc}) \circ \gamma_\Fc.\qedhere
\end{align*}
\end{proof}

\begin{proposition}
\label{prop:split_equalizer}
Let $G$ be a linearly reductive group scheme over $S$. Then for every $\Fc \in \Qcoh^G(S)$ the equalizer diagram in $\Qcoh(S)$
\[
\Fc^G \to \Fc \doublerightarrow{\mu_{\Fc}}{1 \otimes \id_{\Fc}} \Oc_S[G] \otimes_{\Oc_S} \Fc
\]
admits a splitting, which is functorial in $\Fc \in \Qcoh^G(S)$.
\end{proposition}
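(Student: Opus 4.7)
The plan is to build the functorial splitting by an ``averaging'' map combining the morphism $\gamma_\Fc$ from~\rref{p:gamma} with the retraction $\rho \colon \Oc_S[G] \to \Oc_S$ provided by linear reductivity. Concretely, I would define
\[
s_\Fc \colon \Fc \xrightarrow{\gamma_\Fc} \Oc_S[G] \otimes_{\Oc_S} \Fc \xrightarrow{\rho \otimes \id_\Fc} \Oc_S \otimes_{\Oc_S} \Fc = \Fc
\]
and show that $s_\Fc$ factors through $\Fc^G$ and retracts the inclusion $i_\Fc \colon \Fc^G \hookrightarrow \Fc$ functorially.

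The factorization of $s_\Fc$ through $\Fc^G$ relies on two ingredients. First, by Lemma~\rref{lemm:gamma_inv}, the image of $\gamma_\Fc$ lies in $(\Oc_S[G] \otimes_{\Oc_S} \Fc)^G$, where $\Oc_S[G] \otimes_{\Oc_S} \Fc$ is equipped with the tensor $G$-structure of~\rref{p:tensor_equiv} (the $G$-action on $\Oc_S[G]$ being the one from \rref{ex:OS_G}). Second, the morphism $\rho \otimes \id_\Fc$ is itself $G$-equivariant: $\rho$ is in $\Qcoh^G(S)$ by hypothesis and $\id_\Fc$ is trivially so, so~\rref{p:tensor_equiv} makes their tensor product a morphism in $\Qcoh^G(S)$; functoriality of $(-)^G$ therefore forces it to carry $(\Oc_S[G]\otimes_{\Oc_S}\Fc)^G$ into $(\Oc_S \otimes_{\Oc_S} \Fc)^G = \Fc^G$. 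Hence $s_\Fc$ factors as $i_\Fc \circ \tilde s_\Fc$ for some $\tilde s_\Fc \colon \Fc \to \Fc^G$, and naturality of $\tilde s_\Fc$ in $\Fc \in \Qcoh^G(S)$ is immediate, since each of the ingredients $\mu_\Fc$, $\chi_G \otimes \id_\Fc$, $\rho \otimes \id_\Fc$ is natural in $\Fc$.

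To verify $\tilde s_\Fc \circ i_\Fc = \id_{\Fc^G}$, I would compute on sections: for $x \in \Fc^G$ one has $\mu_\Fc(x) = 1 \otimes x$ by definition of $\Fc^G$, hence $\gamma_\Fc(x) = \chi_G(1) \otimes x = 1 \otimes x$ using that the antipode of a Hopf algebra preserves the unit, and finally $(\rho \otimes \id_\Fc)(1 \otimes x) = \rho(1) \otimes x = 1 \otimes x$ using that $\rho$ retracts $\iota$; under $\Oc_S \otimes_{\Oc_S} \Fc = \Fc$ this is $x$, as desired. The only substantive step is the invariance property of $\gamma_\Fc$, which is precisely Lemma~\rref{lemm:gamma_inv}; once this is granted, the rest is direct Hopf-algebra bookkeeping and the naturality comes essentially for free from the construction.
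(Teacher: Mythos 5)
Your proposal follows essentially the same route as the paper: you build the map $s_\Fc=(\rho\otimes\id_\Fc)\circ\gamma_\Fc$, invoke Lemma~\ref{lemm:gamma_inv} to see that it factors through $\Fc^G$, and deduce a natural retraction $\tilde s_\Fc$ of the inclusion. The one thing you omit is that ``a splitting of the equalizer diagram'' here means a split-fork structure: in addition to the retraction $\tilde s_\Fc$ of $\Fc^G\hookrightarrow\Fc$, one should record that $u_\Fc:=(\rho\otimes\id_\Fc)\circ(\chi_G\otimes\id_\Fc)$ is a retraction of $1\otimes\id_\Fc$ (immediate since $\chi_G$ and $\rho$ fix $1$). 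Since $s_\Fc=u_\Fc\circ\mu_\Fc$, the pair $(u_\Fc,\tilde s_\Fc)$ then exhibits the fork as split, which is precisely what makes the equalizer absolute and is what Corollary~\ref{lemm:pullback_fixed} (preservation under $f^*$) relies on. In fact the paper verifies that $u_\Fc$ retracts $1\otimes\id_\Fc$ and deduces $\tilde s_\Fc\circ i_\Fc=\id$ formally from it, so your direct section-level computation of that identity is an equivalent but slightly less economical route to the same conclusion.
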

\begin{proof}
The morphism in $\Qcoh(S)$
\[
u_{\Fc} \colon \Oc_S[G] \otimes_{\Oc_S} \Fc \xrightarrow{\chi_G \otimes \id_{\Fc}} \Oc_S[G] \otimes_{\Oc_S} \Fc \xrightarrow{\rho \otimes \id_{\Fc}} \Oc_S \otimes_{\Oc_S} \Fc = \Fc
\]
is a retraction of $1\otimes \id_{\Fc}$. On the other hand, since
\[
u_{\Fc} \circ \mu_{\Fc} = (\rho \otimes \id_{\Fc}) \circ \gamma_{\Fc}
\]
and $\rho \otimes \id_{\Fc}$ is $G$-equivariant, it follows from \rref{lemm:gamma_inv} that $u_{\Fc} \circ \mu_{\Fc}$ factors through the inclusion $\Fc^G \subset \Fc$. This yields the required splitting. The functoriality is clear from that of $u_{\Fc}$ and $\mu_{\Fc}$.
\end{proof}

\begin{para}
\label{p:def_rho}
When $G$ is linearly reductive and $\Fc \in \Qcoh^G(S)$, we will denote by $\rho_{\Fc} \colon \Fc \to \Fc^G$ the retraction in $\Qcoh(S)$ of the inclusion $\Fc^G \subset \Fc$ given by \rref{prop:split_equalizer}. The functoriality \rref{prop:split_equalizer} implies that, for a morphism $f\colon \Fc \to \Gc$ in $\Qcoh^G(S)$ we have in $\Qcoh(S)$
\begin{equation}
\label{eq:funct_rho}
f^G \circ \rho_{\Fc} = \rho_{\Gc} \circ f.
\end{equation}
\end{para}

\begin{corollary}
\label{lemm:pullback_fixed}
Let $G$ be a linearly reductive group scheme over $S$. Let $f \colon T \to S$ be a morphism with $T$ noetherian, and $\Fc \in \Qcoh^G(S)$. Then $f^*(\Fc^G) = (f^*\Fc)^G$.
\end{corollary}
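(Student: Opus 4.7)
The plan is to exploit the split equalizer provided by Proposition~\ref{prop:split_equalizer}. By Definition~\ref{def:fixed_subsheaf}, $\Fc^G$ is the equalizer of the pair
\[
\mu_{\Fc},\; 1\otimes \id_{\Fc}\colon \Fc \rightrightarrows \Oc_S[G]\otimes_{\Oc_S}\Fc,
\]
and Proposition~\ref{prop:split_equalizer} (together with its proof) exhibits this equalizer as a split fork in $\Qcoh(S)$: we have retractions $\rho_{\Fc}\colon \Fc \to \Fc^G$ of the inclusion and $u_{\Fc}\colon \Oc_S[G]\otimes_{\Oc_S}\Fc \to \Fc$ of $1\otimes \id_{\Fc}$, satisfying $u_{\Fc}\circ\mu_{\Fc}=\iota_{\Fc}\circ\rho_{\Fc}$ (where $\iota_{\Fc}\colon \Fc^G \to \Fc$ is the inclusion). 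A split fork is an absolute limit, so it is preserved by \emph{any} functor; in particular, applying $f^{*}$ yields another split equalizer diagram in $\Qcoh(T)$.

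The next step is to identify the pulled-back diagram with the defining equalizer of $(f^{*}\Fc)^{G}$. Using the canonical isomorphism
\[
f^{*}(\Oc_S[G]\otimes_{\Oc_S}\Fc)\;\simeq\; \Oc_T[G_T]\otimes_{\Oc_T}f^{*}\Fc,
\]
one checks that $f^{*}\mu_{\Fc}$ corresponds to the coaction morphism $\mu_{f^{*}\Fc}$ defining the natural $G_T$-equivariant structure on $f^{*}\Fc$, while $f^{*}(1\otimes\id_{\Fc})$ corresponds to $1\otimes \id_{f^{*}\Fc}$. Thus the pulled-back diagram is precisely
\[
f^{*}\Fc^{G}\;\to\; f^{*}\Fc \;\rightrightarrows\; \Oc_T[G_T]\otimes_{\Oc_T} f^{*}\Fc,
\]
and since it is a (split) equalizer, the canonical map $f^{*}\Fc^{G}\to (f^{*}\Fc)^{G}$ is an isomorphism.

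The only potential obstacle is the compatibility check in the second step, namely that the pullback of $\mu_{\Fc}$ really is $\mu_{f^{*}\Fc}$ under the canonical identification; however this is essentially the definition of the pulled-back equivariant structure (and is built into the conventions set up in \rref{p:pf_equiv} for $f_{*}$, whose left adjoint provides the corresponding structure on $f^{*}$). Everything else is formal from the fact that split equalizers are absolute limits.
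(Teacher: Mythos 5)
Your proof is correct and follows precisely the argument the paper intends: Corollary~\ref{lemm:pullback_fixed} is stated without proof as an immediate consequence of Proposition~\ref{prop:split_equalizer}, the point being exactly what you spell out — that a split equalizer is an absolute limit, hence preserved by $f^*$, and the resulting fork identifies with the defining equalizer of $(f^*\Fc)^G$. Your verification that the split fork data from the proof of Proposition~\ref{prop:split_equalizer} (the retractions $\rho_{\Fc}$ and $u_{\Fc}$ with $u_{\Fc}\circ\mu_{\Fc}=\iota_{\Fc}\circ\rho_{\Fc}$) gives an absolute limit, and that $f^*\mu_{\Fc}$ becomes $\mu_{f^*\Fc}$ under the canonical identification, is exactly right.
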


\begin{corollary}
\label{cor:lin_red_inv_exact}
Let $G$ be a linearly reductive group scheme over $S$. Then the functor $\Qcoh^G(S) \to \Qcoh(S)$ given by $\Fc \mapsto \Fc^G$ is exact.
\end{corollary}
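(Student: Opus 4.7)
The plan is to observe that left exactness of $(-)^G$ is free, and then to deduce right exactness from the functorial splitting furnished by Proposition~\ref{prop:split_equalizer}.

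First, by \rref{p:fp_adjoint} the functor $(-)^G \colon \Qcoh^G(S) \to \Qcoh(S)$ is right adjoint to the trivial action functor, hence preserves kernels. Since the forgetful functor $\Qcoh^G(S) \to \Qcoh(S)$ is exact (kernels and cokernels in $\Qcoh^G(S)$ are computed in $\Qcoh(S)$, as $G$ is flat over $S$), left exactness of $(-)^G$ in $\Qcoh(S)$ follows.

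It remains to show that $(-)^G$ preserves epimorphisms. Let $f \colon \Fc \to \Gc$ be an epimorphism in $\Qcoh^G(S)$. By the functoriality \eqref{eq:funct_rho} of the retraction $\rho$ constructed in \rref{p:def_rho}, we have an equality
\[
f^G \circ \rho_\Fc = \rho_\Gc \circ f
\]
of morphisms $\Fc \to \Gc^G$ in $\Qcoh(S)$. The morphism $\rho_\Gc \colon \Gc \to \Gc^G$ is an epimorphism (being a retraction of the inclusion $\Gc^G \subset \Gc$), and $f$ is an epimorphism by assumption. Therefore $\rho_\Gc \circ f$ is an epimorphism, and so is $f^G \circ \rho_\Fc$, which forces $f^G \colon \Fc^G \to \Gc^G$ to be an epimorphism.

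The only place where care is required is the claim that exactness in $\Qcoh^G(S)$ coincides with exactness in $\Qcoh(S)$; but this is immediate from the flatness of $G$ over $S$, which ensures that forming the coaction equalizer commutes with taking kernels and cokernels of $\Oc_S$-linear morphisms that happen to be $G$-equivariant. There is no real obstacle here: the content is entirely concentrated in \rref{prop:split_equalizer}, which has already been proved, and the present corollary is merely the standard consequence that a functorially split subobject functor is exact.
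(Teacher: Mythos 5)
Your proof is correct and follows essentially the same route as the paper: the paper also reduces to showing surjectivity and uses the functorial identity $f^G \circ \rho_\Fc = \rho_\Gc \circ f$ from \eqref{eq:funct_rho}, together with the fact that $\rho_\Gc$ is split surjective, to conclude. The only difference is that you spell out the (standard) left-exactness part, which the paper leaves implicit.
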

\begin{proof}
Let $f \colon \Fc \to \Gc$ be a surjective morphism in $\Qcoh^G(S)$. Then by \eqref{eq:funct_rho} we have $f^G \circ \rho_{\Fc} = \rho_{\Gc} \circ f$. Since $f$ is surjective and $\rho_{\Gc}$ is surjective (it admits a section), it follows that $f^G$ must be surjective.
\end{proof}

\begin{corollary}
\label{cor:line_red_fixed_0}
Let $G$ be a linearly reductive group scheme over $S$. Then for any $\Fc \in \Qcoh^G(S)$, we have $(\Fc/\Fc^G)^G=0$.
\end{corollary}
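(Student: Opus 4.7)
The plan is to deduce this corollary directly from the exactness of the fixed-point functor established in \rref{cor:lin_red_inv_exact}, combined with the observation that $\Fc^G$ carries a trivial $G$-action and is therefore fixed by $(-)^G$.

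First, I would verify that the quotient $\Fc/\Fc^G$ makes sense in $\Qcoh^G(S)$. The subsheaf $\Fc^G \subset \Fc$ is $G$-invariant: one can either apply \rref{lemm:inv_is_equ} with $C=G$, or observe directly from \rref{p:fp_adjoint} that the counit $\Fc^G \to \Fc$ is a morphism in $\Qcoh^G(S)$ when $\Fc^G$ is endowed with the trivial $G$-action. Hence we obtain a short exact sequence
\[
0 \to \Fc^G \to \Fc \to \Fc/\Fc^G \to 0
\]
in $\Qcoh^G(S)$, with $\Fc^G$ carrying the trivial action.

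Next, I would note that for any $\Oc_S$-module $M$ endowed with the trivial $G$-action, one has $M^G = M$ from Definition \ref{def:fixed_subsheaf}, since the coaction morphism $M \to \Oc_S[G] \otimes_{\Oc_S} M$ is by definition $1 \otimes \id_M$, making the difference $\mu_M - 1 \otimes \id_M$ equal to zero. In particular $(\Fc^G)^G = \Fc^G$, and the functoriality of $(-)^G$ applied to the inclusion $\Fc^G \hookrightarrow \Fc$ recovers this inclusion itself, which is injective.

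Finally, I would apply the exact functor $(-)^G$ (which is exact by \rref{cor:lin_red_inv_exact}) to the above short exact sequence, yielding
\[
0 \to \Fc^G \to \Fc^G \to (\Fc/\Fc^G)^G \to 0,
\]
where the first arrow is an isomorphism by the preceding observation. The conclusion $(\Fc/\Fc^G)^G = 0$ follows immediately. There is no real obstacle here: the substantive content lies in the exactness statement \rref{cor:lin_red_inv_exact}, and once that is in hand the corollary is a formal consequence.
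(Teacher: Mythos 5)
Your proof is correct and follows essentially the same route as the paper's: both arguments rest on the exactness of $(-)^G$ from \rref{cor:lin_red_inv_exact} applied to the surjection $\Fc \to \Fc/\Fc^G$, together with the observation that the composite $\Fc^G \hookrightarrow \Fc \to \Fc/\Fc^G$ vanishes. The paper condenses this to two clauses (``the morphism $\Fc^G \to (\Fc/\Fc^G)^G$ is zero, and it is surjective''), whereas you spell out the full short exact sequence and the identification $(\Fc^G)^G = \Fc^G$, but the logical content is the same.
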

\begin{proof}
The morphism $\Fc^G \to (\Fc/\Fc^G)^G$ is zero, and it is surjective by \rref{cor:lin_red_inv_exact}.
\end{proof}

\begin{proposition}
\label{prop:unique_retraction}
Let $G$ be a linearly reductive group scheme over $S$. Then the morphism $\iota \colon \Oc_S \to \Oc_S[G]$ admits a unique retraction in $\Qcoh^G(S)$.
\end{proposition}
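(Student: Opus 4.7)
The plan is to establish uniqueness of the retraction (existence being immediate from \rref{def:lin_red}). Given two retractions $\rho_1, \rho_2 \colon \Oc_S[G] \to \Oc_S$ of $\iota$ in $\Qcoh^G(S)$, I will show they coincide by exploiting the functoriality of the retractions $\rho_\Fc$ produced by \rref{prop:split_equalizer}.

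Concretely, I would fix the retraction $\rho_1$ and use it as input to \rref{prop:split_equalizer}, producing a functorial family of morphisms $\rho^{(1)}_{\Fc} \colon \Fc \to \Fc^G$ in $\Qcoh(S)$, for $\Fc \in \Qcoh^G(S)$. Applying the functoriality relation \eqref{eq:funct_rho} to the $G$-equivariant morphism $\rho_2 \colon \Oc_S[G] \to \Oc_S$ yields the identity
\[
(\rho_2)^G \circ \rho^{(1)}_{\Oc_S[G]} = \rho^{(1)}_{\Oc_S} \circ \rho_2.
\]
Two simplifications will collapse each side. On the right, $\rho^{(1)}_{\Oc_S}$ retracts the inclusion $(\Oc_S)^G = \Oc_S \hookrightarrow \Oc_S$, which is the identity, so $\rho^{(1)}_{\Oc_S} = \id_{\Oc_S}$. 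On the left, \rref{ex:OS_G} identifies the inclusion $(\Oc_S[G])^G \hookrightarrow \Oc_S[G]$ with $\iota$, and the equality $\rho_2 \circ \iota = \id$ then forces $(\rho_2)^G = \id_{\Oc_S}$. The displayed equation therefore reduces to $\rho^{(1)}_{\Oc_S[G]} = \rho_2$.

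Running the same computation with $\rho_2$ replaced by $\rho_1$ (i.e.\ applying \eqref{eq:funct_rho} to $f = \rho_1$ in the functorial system built from $\rho_1$) gives $\rho^{(1)}_{\Oc_S[G]} = \rho_1$. Hence $\rho_1 = \rho_2 = \rho^{(1)}_{\Oc_S[G]}$, and uniqueness follows. The only real subtlety is bookkeeping the fact that the morphism $\rho^{(1)}_{\Oc_S[G]}$ depends \emph{a priori} on the choice $\rho_1$, yet the functoriality argument shows it actually coincides with every retraction of $\iota$ in $\Qcoh^G(S)$; once one trusts \rref{prop:split_equalizer} and its naturality, the calculation is purely formal.
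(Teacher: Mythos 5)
Your proposal is correct and follows essentially the same route as the paper's own proof: apply the functoriality relation \eqref{eq:funct_rho} (coming from \rref{prop:split_equalizer}) to an arbitrary retraction of $\iota$, and collapse both sides using $(\Oc_S[G])^G = \Oc_S$ from \rref{ex:OS_G} together with $\rho_{\Oc_S}=\id$. The only cosmetic difference is that you run the argument twice (once for $\rho_1$, once for $\rho_2$) to pin both down as $\rho^{(1)}_{\Oc_S[G]}$, whereas the paper states it once for a general retraction $r$, which already gives uniqueness.
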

\begin{proof}
Let $r \colon \Oc_S[G] \to \Oc_S$ be a retraction in $\Qcoh^G(S)$ of $\iota$. Then applying \eqref{eq:funct_rho} to the morphism $r$, and using \rref{eq:fixed_Oc_S_G}, yields a commutative diagram

\[ \xymatrix{
\Oc_S[G]\ar[rr]^-{\rho_{\Oc_S[G]}} \ar[d]_r && \Oc_S \ar[d]^{r^G} \\ 
\Oc_S \ar[rr]^-{\rho_{\Oc_S}} && \Oc_S
}\]
Since $r^G = \rho_{\Oc_S}=\id_{\Oc_S}$, we deduce that $r=\rho_{\Oc_S[G]}$.
\end{proof}

\begin{corollary}
\label{cor:lin_red_is_local}
Let $T \to S$ be a faithfully flat morphism with $T$ noetherian. If the $T$-group scheme $G_T = G \times_S T$ is linearly reductive, then so is the $S$-group scheme $G$.
\end{corollary}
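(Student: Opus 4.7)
The plan is to descend the given retraction from $T$ to $S$ by faithfully flat descent. By assumption, we have a retraction $r_T \colon \Oc_T[G_T] \to \Oc_T$ of $\iota_T$ in $\Qcoh^{G_T}(T)$, and I want to produce a retraction $r \colon \Oc_S[G] \to \Oc_S$ in $\Qcoh^G(S)$.

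First, I would construct $r$ at the level of $\Oc_S$-modules by descent. Consider the two projections $p_1, p_2 \colon T \times_S T \to T$. Pulling back along each yields morphisms $p_1^* r_T, p_2^* r_T \colon \Oc_{T \times_S T}[G_{T \times_S T}] \to \Oc_{T \times_S T}$ that both lie in $\Qcoh^{G_{T\times_S T}}(T\times_S T)$ (equivariance being preserved by pullback) and both retract $\iota_{T \times_S T}$. Since $G_{T \times_S T}$ is linearly reductive by base change, as in \rref{p:bc_reductive}, the uniqueness of retractions in \rref{prop:unique_retraction} forces $p_1^* r_T = p_2^* r_T$. Faithfully flat descent for morphisms of quasi-coherent $\Oc_S$-modules then yields a unique $\Oc_S$-linear map $r \colon \Oc_S[G] \to \Oc_S$ with $f^* r = r_T$.

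Next, I would upgrade $r$ to a retraction of $\iota$ in $\Qcoh^G(S)$. The identity $r \circ \iota = \id_{\Oc_S}$ and the $G$-equivariance of $r$ are equalities between $\Oc_S$-linear morphisms whose pullbacks along $f$ are, respectively, the retraction equation for $r_T$ and the $G_T$-equivariance of $r_T$, both of which hold by construction. Faithful flatness of $f$ then lifts each equality from $T$ to $S$, and $r$ witnesses the linear reductivity of $G$.

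The main subtlety is that the argument implicitly applies \rref{prop:unique_retraction} (and \rref{p:bc_reductive}) over the possibly non-noetherian scheme $T \times_S T$. This is harmless, since the proofs of both statements consist of formal manipulations with the coaction $\mu$, the antipode $\chi_G$, and the chosen retraction, and make no use of noetherian hypotheses on the base; alternatively one can avoid \rref{prop:unique_retraction} on $T \times_S T$ entirely by unfolding the explicit formula $r_T = (\rho_T \otimes \id) \circ (\chi_G \otimes \id) \circ \mu_G$ from the proof of \rref{prop:split_equalizer} and observing that its two pullbacks to $T \times_S T$ coincide because every constituent map does.
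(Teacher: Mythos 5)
Your main argument is correct and is precisely the paper's proof spelled out: the paper's entire proof of this corollary reads ``This follows from the uniqueness in \rref{prop:unique_retraction} by fpqc descent,'' and you have filled in exactly the intended descent argument --- constructing the descent datum from uniqueness of the retraction over $T\times_S T$, descending the module map, and verifying the retraction and equivariance identities over $S$ by faithful flatness. Your observation that neither \rref{p:bc_reductive} nor \rref{prop:unique_retraction} actually uses noetherianity (so invoking them over $T\times_S T$ is harmless) is also a valid and worthwhile point.

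Your proposed alternative, however, is circular and does not in fact avoid \rref{prop:unique_retraction}. The formula $r_T = (\rho_T \otimes \id) \circ (\chi_G \otimes \id) \circ \mu_G$ is not a closed-form expression independent of the retraction: the $\rho_T$ appearing in it is itself the retraction witnessing linear reductivity of $G_T$, and by uniqueness $\rho_T = r_T$. Pulling the identity back along $p_1$ and $p_2$ merely gives $p_i^* r_T = (p_i^* r_T \otimes \id)\circ(\chi_G \otimes \id)\circ\mu_G$ for $i=1,2$; the constituent $\rho_T$ is exactly the map whose two pullbacks you need to show coincide, so the claim that ``every constituent map'' has matching pullbacks begs the question. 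Stick with the main route via \rref{prop:unique_retraction}.
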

\begin{proof}
This follows from the uniqueness in \rref{prop:unique_retraction} by fpqc descent.
\end{proof}

\begin{remark}
\label{rem:lin_red_finite}
Assume that $G$ is a finite (and flat as usual) group scheme over $S$, and that the functor $\Qcoh^G(S) \to \Qcoh(S)$ given by $\Fc \mapsto \Fc^G$ is exact. We claim that $G$ is linearly reductive.

By \rref{cor:lin_red_is_local} and \cite[Proposition~2.4 (a)]{AOV-Tame}, we may assume that $S$ is affine. Recall that when $\Fc \in \Qcoh^G(S)$ is locally free of finite rank as an $\Oc_S$-module, then for any $\Gc \in \Qcoh^G(S)$ the quasi-coherent $\Oc_S$-module $\Hom_{\Oc_S}(\Fc,\Gc)$ has a natural $G$-equivariant structure, and
\[
\Hom_{\Qcoh^G(S)}(\Fc,\Gc) = H^0(S,\Hom_{\Oc_S}(\Fc,\Gc)^G).
\]
Note that the morphism $\iota \colon \Oc_S \to \Oc_S[G]$ admits a retraction in $\Qcoh(S)$ (denoted $\varepsilon$ above). Thus the morphism in $\Qcoh^G(S)$
\[
\Hom_{\Oc_S}(\Oc_S[G],\Oc_S) \to \Hom_{\Oc_S}(\Oc_S,\Oc_S)
\]
is surjective (since it admits a section in $\Qcoh(S)$). Taking $G$-invariants and then global sections (which are both exact functors) thus shows that
\[
\Hom_{\Qcoh^G(S)}(\Oc_S[G],\Oc_S) \to \Hom_{\Qcoh^G(S)}(\Oc_S,\Oc_S)
\]
is surjective. Any preimage of $\id_{\Oc_S}$ is then a retraction of $\Oc_S \subset \Oc_S[G]$ in $\Qcoh^G(S)$.
\end{remark}

\begin{remark}
\label{rem:lin_red}
Corollary \rref{cor:lin_red_inv_exact} asserts that linearly reductive groups in the sense of \rref{def:lin_red} are linearly reductive in the sense of \cite[Definition~2.14]{Hoyois-equivariant}. The converse holds when $S$ is the spectrum of a field by \rref{p:lin_red_field}, or when $G$ is finite by \rref{rem:lin_red_finite}. In other words, our definition of linear reductivity agrees with that of \cite{Margaux-red} (over fields), and with that of \cite{AOV-Tame} (for finite group schemes).
\end{remark}

\subsection{The zero-locus of a section}

\begin{definition}
\label{def:Z_A}
Let $A$ be a commutative ring and $M$ an $A$-module. Let $\Sigma \subset M$ be a subset. We denote by
\[
\Zei_A(\Sigma) \subset A
\]
the ideal of $A$ generated by the elements $\varphi(t)$, for $t \in \Sigma$ and $\varphi \in \Hom_A(M,A)$.
\end{definition}

\begin{para}
\label{lemm:Z_funct}
In the situation of \rref{def:Z_A}, if $g \colon M \to N$ is a morphism of $A$-modules, then $\Zei_A(g(\Sigma)) \subset \Zei_A(\Sigma)$. In particular if $g$ admits a retraction, then $\Zei_A(g(\Sigma)) = \Zei_A(\Sigma)$
\end{para}

\begin{lemma}
\label{lemm:Z_proj}
Let $A$ be a commutative ring and $M$ a projective $A$-module. Let $\Sigma \subset M$ be a subset.
\begin{enumerate}[(i)]
\item 
\label{lemm:Z_proj:1}
We have $\Zei_A(\Sigma)=0$ if and only if $\Sigma \subset \{0\}$.

\item 
\label{lemm:Z_proj:2}
Let $f\colon A \to B$ be a morphism of commutative rings. Then the ideal $\Zei_B(\Sigma \otimes 1)$ of $B$ is generated by $f(\Zei_A(\Sigma))$. (Here $\Sigma \otimes 1 \subset M \otimes_A B$ denotes the image of $\Sigma$ under the morphism $M \to M \otimes_A B$ induced by $f$.)
\end{enumerate}
\end{lemma}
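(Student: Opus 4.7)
The plan is to handle both parts by embedding $M$ into a free module and exploiting the elementwise finite support of elements of a direct sum. Since $M$ is projective, choose an $A$-module $M'$ and a free module $F = A^{(I)}$ with a decomposition $F = M \oplus M'$; write $j \colon M \hookrightarrow F$ and $p \colon F \twoheadrightarrow M$ for the inclusion and projection, and let $\{e_i^*\}_{i \in I} \subset \Hom_A(F,A)$ denote the coordinate functionals. For \eqref{lemm:Z_proj:1}, one direction is trivial since $\varphi(0)=0$ for any $\varphi$. Conversely, suppose $\Zei_A(\Sigma) = 0$; then for $t \in \Sigma$ the restrictions $e_i^* \circ j \in \Hom_A(M,A)$ all vanish on $t$, so every coordinate of $j(t) \in F$ is zero. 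Because $j(t)$ has only finitely many nonzero coordinates, $j(t) = 0$, and injectivity of $j$ gives $t = 0$.

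For \eqref{lemm:Z_proj:2}, the inclusion $B \cdot f(\Zei_A(\Sigma)) \subset \Zei_B(\Sigma \otimes 1)$ is immediate: for $t \in \Sigma$ and $\varphi \in \Hom_A(M,A)$, apply the $B$-linear map $\varphi \otimes \id_B \colon M \otimes_A B \to B$ to $t \otimes 1$ to produce $f(\varphi(t))$. For the reverse inclusion, I would fix $\psi \in \Hom_B(M \otimes_A B, B)$ and $t \in \Sigma$, and set $\widetilde\psi = \psi \circ (p \otimes \id_B) \colon F \otimes_A B \to B$. Writing $j(t) = \sum_{i \in I} a_i e_i$ as a finite $A$-linear combination with $a_i = (e_i^* \circ j)(t) \in \Zei_A(\Sigma)$, I would compute
\[
\psi(t \otimes 1) = \widetilde\psi(j(t) \otimes 1) = \sum_{i \in I} f(a_i) \, \widetilde\psi(e_i \otimes 1),
\]
which exhibits $\psi(t \otimes 1)$ as an element of $B \cdot f(\Zei_A(\Sigma))$.

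The main subtlety to watch is that without a finite generation hypothesis on $M$, the canonical map $\Hom_A(M, A) \otimes_A B \to \Hom_B(M \otimes_A B, B)$ is not generally surjective, so one cannot reduce an arbitrary $\psi$ to a sum of elementary tensors $\varphi \otimes b$. The approach above sidesteps this entirely by exploiting the fact that any single element $j(t)$ has finite support in $F$: only finitely many coordinate functionals are needed to analyze the specific scalar $\psi(t \otimes 1)$, so the argument proceeds one element at a time and never requires a global comparison of the two Hom groups.
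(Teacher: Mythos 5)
Your proof is correct, and it uses the same underlying mechanism as the paper's: a split embedding $j \colon M \hookrightarrow F$ into a free module, combined with the fact that each individual element $j(t)$ has finite support, which lets you work with only finitely many coordinate functionals at a time. The paper packages this slightly differently — it first reduces to $\Sigma$ a singleton and to $M$ free (via the auxiliary observation that a retraction preserves the ideal $\Zei$), then to $M$ free of \emph{finite} rank (since the single element lives in a finitely generated free direct summand), at which point $\Hom_A(M,A)\otimes_A B \to \Hom_B(M\otimes_A B, B)$ is a bijection and the claim is immediate. Your version inlines these reductions and replaces the appeal to bijectivity of the Hom comparison map with a direct coordinate computation. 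Your explicit remark that the Hom comparison fails to be surjective for non–finitely-generated $M$ and that one must therefore argue elementwise is exactly the point the paper's reduction chain is designed to handle; you have simply made it explicit rather than quoting the finite-rank free case.
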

\begin{proof}
\eqref{lemm:Z_proj:1}: By \rref{lemm:Z_funct}, we may assume that $M$ is free, in which case the statement is clear.

\eqref{lemm:Z_proj:2}: It will suffice to treat the case $\Sigma = \{s\}$, where $s \in M$. By \rref{lemm:Z_funct}, we may assume that $M$ is free. Then $s$ belongs to some finitely generated free $A$-submodule of $M$, and by \rref{lemm:Z_funct} again, we may assume that $M$ is free of finite rank. Then the morphism $\Hom_A(M,A) \otimes_A B \to \Hom_B(M\otimes_A B,B)$ is bijective, and the statement follows easily.
\end{proof}

\begin{definition}
\label{def:zero_scheme}
Let $X$ be a noetherian scheme and $\Fc$ a quasi-coherent $\Oc_X$-module. For $s \in H^0(X,\Fc)$, we let $\Ze_X(s) \subset X$ be the closed subscheme whose ideal is the image of $s^\vee \colon \Fc^\vee \to \Oc_X$. We say that $s$ is a \emph{nowhere vanishing section} of $\Fc$ if $\Ze_X(s) = \varnothing$. 
\end{definition}

\begin{para}
\label{p:Ze:funct}
When $\varphi \colon \Fc \to \Gc$ is a morphism of quasi-coherent $\Oc_X$-modules, we have
\[
\Ze_X(s) \subset \Ze_X(\varphi(s)).
\]
\end{para}

\begin{para}
\label{p:Z_affine}
When $X=\Spec A$ is affine (and noetherian) in \rref{def:zero_scheme}, the ideal of $\Ze_X(s)$ is $\Zei_A(\{s\})$, in the notation of \rref{def:Z_A}.
\end{para}

\begin{para}
Let $X$ be a noetherian scheme and $V \to X$ a vector bundle. The sheaf of sections $\Vc$ of $V$ is a locally free finite rank $\Oc_X$-module. When $s \in H^0(X,\Vc)$, we have a cartesian square of schemes, where $z$ denotes the zero-section,
\[ \xymatrix{
\Ze_X(s)\ar[r] \ar[d] & X \ar[d]^z \\ 
X \ar[r]^s & V
}\]
In particular the section $s \in H^0(X,\Vc)$ is nowhere vanishing if and only if the corresponding morphism $s\colon X \to V$ factors through the complement of the zero-section $V^\circ=V \smallsetminus z(X)$.
\end{para}

\begin{lemma}
\label{lemm:Ze_funct}
Let $f \colon Y \to X$ be a morphism of noetherian schemes, and $\Fc$ be a quasi-coherent $\Oc_X$-module. Then for any $s \in H^0(X,\Fc)$, we have
\[
\Ze_Y(f^*(s)) \subset f^{-1}\Ze_X(s).
\]
\end{lemma}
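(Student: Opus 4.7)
The plan is to unwind the definitions and check the inclusion at the level of defining ideals. Recall that $\Ze_X(s) \subset X$ is the closed subscheme cut out by the ideal $\Ic_X \subset \Oc_X$ defined as the image of $s^\vee \colon \Fc^\vee \to \Oc_X$. By the usual definition of scheme-theoretic preimage, $f^{-1}\Ze_X(s)$ is the closed subscheme of $Y$ cut out by the ideal $f^{-1}(\Ic_X)\cdot \Oc_Y$ generated by the pullbacks of local sections of $\Ic_X$. Similarly, $\Ze_Y(f^*s)$ is cut out by $\Ic_Y := \mathrm{Im}\bigl((f^*s)^\vee \colon (f^*\Fc)^\vee \to \Oc_Y\bigr)$. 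The required inclusion of closed subschemes $\Ze_Y(f^*s) \subset f^{-1}\Ze_X(s)$ is equivalent to the reverse inclusion of ideals $f^{-1}(\Ic_X)\cdot \Oc_Y \subset \Ic_Y$.

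For this, the key point is that inverse image of sheaves is functorial on $\Hom$: there is a natural map $f^* \colon \Hom_{\Oc_X}(\Fc, \Oc_X) \to \Hom_{\Oc_Y}(f^*\Fc, \Oc_Y)$, $\varphi \mapsto f^*\varphi$, and evaluation is compatible with pullback, in the sense that $(f^*\varphi)(f^*s) = f^*(\varphi(s))$ in $\Oc_Y$. Thus every local generator of $f^{-1}(\Ic_X)\cdot \Oc_Y$, which by definition is of the form $f^*(\varphi(s))$ for some local section $\varphi$ of $\Fc^\vee$, lies in $\Ic_Y$. This gives exactly the required inclusion.

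Alternatively, one can work affine-locally: by \rref{p:Z_affine} the statement reduces, for $X=\Spec A$ and $Y=\Spec B$ with $\Fc$ corresponding to an $A$-module $M$ and $s \in M$, to showing that $f(\Zei_A(\{s\}))\cdot B \subset \Zei_B(\{s\otimes 1\})$. This follows directly from the identity $f(\varphi(s)) = (\varphi\otimes \id_B)(s\otimes 1)$ for $\varphi \in \Hom_A(M,A)$, since $\varphi\otimes \id_B$ lies in $\Hom_B(M\otimes_A B, B)$. There is no real obstacle here: the assertion is the easy direction of base change, valid for arbitrary quasi-coherent sheaves without any projectivity or finite generation hypothesis, so no genuine local-to-global or flatness input is needed.
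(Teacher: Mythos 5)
Your proof is correct and takes essentially the same route as the paper: the paper's one-line argument is that $f^*(s^\vee)\colon f^*(\Fc^\vee)\to\Oc_Y$ factors through $(f^*s)^\vee$ via the canonical map $f^*(\Fc^\vee)\to(f^*\Fc)^\vee$, which is precisely your identity $(f^*\varphi)(f^*s)=f^*(\varphi(s))$ recast at the level of sheaf morphisms rather than local sections. Your affine restatement via \rref{p:Z_affine} and your remark that only the easy direction of base change (no projectivity needed, unlike in \rref{lemm:Z_proj}~(ii)) is involved are both accurate.
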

\begin{proof}
This follows from the fact that $f^*(\Fc^\vee) \xrightarrow{f^*(s^\vee)} f^*\Oc_X \simeq \Oc_Y$ factors as $f^*(\Fc^\vee) \to (f^*\Fc)^\vee \xrightarrow{(f^*s)^\vee} \Oc_Y$.
\end{proof}

\begin{para}
\label{p:alt_def_equiv_modules}
Let $G$ be a group scheme over $S$. When $X$ is an $S$-scheme with a $G$-action, we will consider the category $\Qcoh(X;G)$ of quasi-coherent $G$-equivariant $\Oc_X$-modules, for which we refer to \cite[I, \S6.5]{SGA3-1}. We note that the category $\Qcoh(S;G)$ is naturally equivalent to the category $\Qcoh^G(S)$ defined in \rref{p:def_equiv_module} (see \cite[I, Proposition~4.7.2 and Remarque~6.5.4]{SGA3-1}). Moreover if $T \to S$ is a scheme morphism with $T$ noetherian (carrying the trivial $G$-action), then we have natural equivalences $\Qcoh(T;G) \simeq \Qcoh(T;G_T) \simeq \Qcoh^{G}(T)$.
\end{para}

\begin{para}
\label{p:def_G_eq_section}
Let $G$ be a group scheme over $S$. Let $X$ be a noetherian $S$-scheme with a $G$-action and $\Fc \in \Qcoh(X;G)$. A \emph{$G$-equivariant section} of $\Fc$ is an element of
\[
\Hom_{\Qcoh(X;G)}(\Oc_X,\Fc) \subset \Hom_{\Qcoh(X)}(\Oc_X,\Fc).
\]
Writing $\pi=\pi_X \colon X \to S$, the subset of $G$-equivariant sections of $\Fc$ may be identified with 
\[
H^0(S,(\pi_*\Fc)^G) \subset H^0(S,\pi_*\Fc) = H^0(X,\Fc).
\]
\end{para}

\begin{para}
\label{p:equiv_vb}
Let $X$ be a noetherian $S$-scheme with a $G$-action. Let $E \to X$ be a vector bundle, and $\Ec$ its sheaf of sections. We say that $E$ is a \emph{$G$-equivariant vector bundle} if $\Ec$ is a quasi-coherent $G$-equivariant $\Oc_X$-module. Such a structure induces a $G$-action on $E$ such that the morphism $E\to X$ is $G$-equivariant (and in addition the action is ``linear'' on the fibres, see \cite[I, Remarque~6.5.3]{SGA3-1}).
\end{para}

\begin{para}
\label{p:eq_sections}
In the situation of \rref{p:equiv_vb}, a section of $E$ is $G$-equivariant as a scheme morphism $X \to E$ if and only if the corresponding section of $\Ec$ is $G$-equivariant in the sense of \rref{p:def_G_eq_section}.
\end{para}

\begin{lemma}
\label{lemm:fixed_contained}
Let $C$ be a linearly reductive group scheme over $S$ (see \rref{def:lin_red}). Let $\Fc$ be a $C$-equivariant quasi-coherent $\Oc_S$-module such that $\Fc^C=0$. Let $X$ be a noetherian $S$-scheme with a $C$-action, and $Y \subset X$ a $C$-equivariant closed subscheme on which $C$ acts trivially. If $s$ is a $C$-equivariant section of $\pi_X^*\Fc$, then $Y \subset \Ze_X(s)$ as closed subschemes of $X$.
\end{lemma}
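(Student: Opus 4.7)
The plan is to reduce the statement to the claim that the pullback $i^* s$ along the closed immersion $i \colon Y \hookrightarrow X$ vanishes identically, viewing $i^* s$ as a section of $i^* \pi_X^* \Fc = \pi_Y^* \Fc$, where $\pi_Y \colon Y \to S$ denotes the structure morphism. Granted this vanishing, $\Ze_Y(i^* s) = Y$ scheme-theoretically, so \rref{lemm:Ze_funct} yields $Y = \Ze_Y(i^* s) \subset i^{-1}\Ze_X(s)$, which is precisely the desired inclusion $Y \subset \Ze_X(s)$ as closed subschemes of $X$.

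To prove $i^* s = 0$, I would proceed as follows. The closed immersion $i$ is $C$-equivariant by hypothesis, so $i^* s$ is a $C$-equivariant section of $\pi_Y^* \Fc$. Since $C$ acts trivially on $Y$, the morphism $\pi_Y$ is $C$-equivariant, and by \rref{p:def_G_eq_section} the set of $C$-equivariant sections of $\pi_Y^* \Fc$ identifies with $H^0(S,(\pi_{Y,*}\pi_Y^*\Fc)^C)$. Applying \rref{p:pf_fixed} to commute the functor $(-)^C$ past $\pi_{Y,*}$, this rewrites as $H^0(S,\pi_{Y,*}((\pi_Y^*\Fc)^C))$. Now the linear reductivity of $C$ enters through \rref{lemm:pullback_fixed}, which gives $(\pi_Y^*\Fc)^C = \pi_Y^*(\Fc^C)$; but $\Fc^C = 0$ by assumption, so $(\pi_Y^*\Fc)^C = 0$ and $i^* s = 0$ as required.

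The main point to be careful about is the use of \rref{lemm:pullback_fixed} to commute fixed points with the pullback along $\pi_Y$; this is exactly where the linear reductivity of $C$ is used, whereas every other step is formal manipulation of the adjunctions established earlier.
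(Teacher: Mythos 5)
Your argument is correct and essentially identical to the paper's own proof: restrict $s$ along $i$, show $(\pi_{Y,*}\pi_Y^*\Fc)^C=0$ by combining \rref{p:pf_fixed} with \rref{lemm:pullback_fixed} (the latter being where linear reductivity enters), conclude $i^*s=0$, and finish with \rref{lemm:Ze_funct}. Nothing to add.
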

\begin{proof}
Denote by $i \colon Y \to X$ the closed immersion, and write $\pi=\pi_Y \colon Y \to S$. The pullback $i^*(s)$ is a $C$-equivariant section of $i^*\pi_X^*\Fc = \pi^*\Fc$ over $Y$. Now we have
\[
(\pi_*i^*\pi_X^*\Fc)^C = (\pi_*\pi^*\Fc)^C \overset{\rref{p:pf_fixed}}{=} \pi_*((\pi^*\Fc)^C) \overset{\rref{lemm:pullback_fixed}}{=} \pi_*\pi^*(\Fc^C) = 0.
\]
We deduce that $i^*(s) = 0$. Thus, by \rref{lemm:Ze_funct}, it follows that
\[
Y = \Ze_Y(i^*(s)) \subset i^{-1}\Ze_X(s) = Y \cap \Ze_X(s) \subset \Ze_X(s).\qedhere
\]
\end{proof}

\subsection{The fixed locus}
\begin{para}
\label{p:def_loc_proj}
(See \cite[Tag~\href{https://stacks.math.columbia.edu/tag/05JN}{05JN}]{stacks}.) A quasi-coherent $\Oc_S$-module $\Fc$ is called \emph{locally projective} if $\Fc(U)$ is a projective $\Oc_S(U)$-module for every affine open subscheme $U \subset S$.
\end{para}

\begin{para}
\label{p:bc_loc_proj}
(See \cite[Tag~\href{https://stacks.math.columbia.edu/tag/060M}{060M}]{stacks}.) If the quasi-coherent $\Oc_S$-module $\Fc$ is locally projective, and $f \colon T \to S$ is a morphism with $T$ noetherian, then the $\Oc_T$-module $f^*\Fc$ is locally projective.
\end{para}

\begin{para}
\label{p:def_fixed_locus}
Let $C$ be a group scheme over $S$ such that the $\Oc_S$-module $\Oc_S[C]$ is locally projective (see \rref{p:def_loc_proj}). If $X$ is a separated $S$-scheme with a $C$-action, then there exists a closed subscheme $X^C \subset X$ such that for any $S$-scheme $T$, the subset $X^C(T)\subset X(T)$ consists of those morphisms $T\to X$ which are $C$-equivariant with respect to the trivial $C$-action on $T$. This is proved in \cite[Proposition~A.8.10~(1)]{CGP} under the assumption that $S$ is affine, and the general case follows easily. (A proof when $X$ and $S$ are affine is given in \rref{lemm:fixed_locus} below.)
\end{para}

\begin{remark}
We have seen in \rref{p:bc_loc_proj} that the assumption that the $\Oc_S$-module $\Oc_S[C]$ is locally projective is stable under base change. It is satisfied under any of the following assumptions:
\begin{itemize}
\item $S$ is the spectrum of a field,
\item $C$ is of multiplicative type,
\item $C$ is finite constant,
\item the geometric fibres of $C$ over $S$ are integral (see \cite[Chapter 2, \S1, Theorem~1 and Example~3]{Raynaud-flat}).
\end{itemize}
We refer to \cite[Appendix]{Margaux-red} for a discussion of a closely related condition.
\end{remark}

\begin{para}
\label{p:fixed_bc_mono}
In the situation of \rref{p:def_fixed_locus}, let $f\colon Y \to X$ be a $C$-equivariant morphism between separated $S$-schemes with a $C$-action. If $f$ is a scheme monomorphism (for instance a closed immersion), then $Y^C=X^C \times_X Y$.
\end{para}

\begin{para}
Let $C$ be a linearly reductive group scheme over $S$, and assume that the $\Oc_S$-module $\Oc_S[C]$ is locally projective. Let $E \to S$ be a $C$-equivariant vector bundle, with sheaf of sections $\Ec$. Then the $\Oc_S$-module $\Ec$ is locally free of finite rank, and it follows from \rref{prop:split_equalizer} that the same is true for $\Ec^C$. For any scheme morphism $f \colon T \to S$ with $T$ noetherian (and trivial $C$-action), we have
\begin{align*}
E^C(T)  
&= \Hom_{\Qcoh^C(T)}(\Oc_T,f^*\Ec)  && \text{by \rref{p:eq_sections}}\\ 
&=  \Hom_{\Qcoh(T)}(\Oc_T,(f^*\Ec)^C) && \text{by  \rref{p:fp_adjoint}}\\
&= H^0(T,(f^*\Ec)^C)\\
&=H^0(T,f^*(\Ec^C)) && \text{by \rref{lemm:pullback_fixed}}.
\end{align*}
It follows that $E^C \to S$ is naturally a $C$-equivariant vector bundle, and that its sheaf of sections is $\Ec^C$.
\end{para}

\begin{lemma}
\label{p:smooth_fixed_locus}
Let $C$ be a linearly reductive group scheme over $S$, and assume that the  $\Oc_S$-module $\Oc_S[C]$ is locally projective. Let $X$ be a smooth separated $S$-scheme with a $C$-action. Then $X^C$ is smooth over $S$. Denoting by $N$ the normal bundle to the closed immersion $i \colon X^C \to X$, we have $N^C=0$ as a vector bundle over $X^C$.
\end{lemma}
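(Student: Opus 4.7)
The plan is to verify formal smoothness of $X^C \to S$ by a lifting argument, invoking the linear reductivity of $C$ through the functorial retraction $\rho$ of \rref{prop:split_equalizer}, and then to identify $T_{X^C/S}$ with $(i^*T_{X/S})^C$ in order to conclude $N^C = 0$ via \rref{cor:line_red_fixed_0}. Since $X^C \hookrightarrow X$ is a closed immersion and $X \to S$ is of finite type, $X^C \to S$ is of finite type, so formal smoothness (together with this) will give smoothness.

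For formal smoothness, consider a square-zero thickening $T_0 \hookrightarrow T$ of affine noetherian $S$-schemes with ideal $\mathcal{J}$, and a $C$-equivariant morphism $\bar g \colon T_0 \to X$ (with $T_0$ carrying the trivial $C$-action through its structure map to $S$). By \rref{p:def_fixed_locus} this is the same as a morphism $T_0 \to X^C$, and I must produce a $C$-equivariant lift $g \colon T \to X$. Smoothness of $X \to S$ provides some (a priori non-equivariant) lift $g_0$, and the set $L$ of all lifts is a torsor under the $\Oc_{T_0}$-module $M = \Hom_{\Oc_{T_0}}(\bar g^* \Omega_{X/S}, \mathcal{J})$. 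Since $\Omega_{X/S}$ is $C$-equivariant on $X$ and $\mathcal{J}$ carries the trivial $C$-action, $M$ is naturally a $C$-equivariant $\Oc_{T_0}$-module. The $C$-action on $X$ induces a compatible affine $C$-action on $L$, and $C$-equivariant lifts are exactly the $C$-fixed points of $L$. Fixing $g_0$ to identify $L$ with $M$ turns the defect of equivariance of $g_0$ into a Hochschild $1$-cocycle $\phi$ valued in $M$, and I would construct the correction term using the retraction $\rho_M \colon M \to M^C$ of \rref{prop:split_equalizer}. The main obstacle I anticipate is the careful bookkeeping required in this averaging step: one must verify in detail how the $C$-action on $X$ descends to an affine action on $L$, how it interacts with the linear $C$-action on $M$, and that applying $\rho_M$ really produces a $C$-fixed element.

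Once smoothness is established, the normal bundle statement follows readily. On $X^C$ (with trivial $C$-action) one has the short exact sequence of $C$-equivariant vector bundles
\[
0 \to T_{X^C/S} \to i^* T_{X/S} \to N \to 0,
\]
together with the identification $T_{X^C/S} = (i^* T_{X/S})^C$ as subsheaves of $i^* T_{X/S}$. The latter is a consequence of the preceding formal smoothness analysis specialized to dual numbers: a tangent vector to $X^C$ at a $k$-point $x$ is the same as a $C$-equivariant morphism $\Spec k[\epsilon]/(\epsilon^2) \to X$ extending $x$, hence a $C$-fixed tangent vector of $X$ at $x$. Consequently $N$ is the quotient of $i^* T_{X/S}$ by its fixed subsheaf, and \rref{cor:line_red_fixed_0} yields $N^C = 0$.
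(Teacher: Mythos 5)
Your identification $T_{X^C/S}=(i^*T_{X/S})^C$ via the dual-numbers description and the appeal to \rref{cor:line_red_fixed_0} to get $N^C=0$ are exactly what the paper does. The divergence is in the smoothness part: after reducing to $S$ affine (since smoothness is local), the paper simply cites \cite[Proposition~A.8.10~(2)]{CGP}, whereas you sketch a direct formal-smoothness argument via deformation theory. That strategy is sound in principle --- linear reductivity does kill the obstruction --- but there is a genuine gap at the step you yourself flag, and it is not just bookkeeping.

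The issue is that the ``defect of equivariance'' cocycle $\phi$ is not an element of $M$; it is (after the usual identifications) an element of $\Oc_{T_0}[C]\otimes_{\Oc_{T_0}} M$ satisfying a cocycle identity, so applying $\rho_M\colon M\to M^C$ to it does not make sense as written. To produce the correction one should instead apply $\rho\otimes\id_M$ (the chosen $C$-equivariant retraction of $\Oc_S[C]$, base-changed and tensored with $M$), landing in $M$, and then verify --- using the cocycle identity together with the $C$-equivariance of $\rho$ --- that the coboundary of this element is $\phi$. Equivalently, one can phrase it as: the exactness of $(-)^C$ (hence of $\Hom_{\Qcoh^C(T_0)}(\Oc_{T_0},-)$ over affine $T_0$) forces $\mathrm{Ext}^1_{\Qcoh^C(T_0)}(\Oc_{T_0},M)=0$, which is exactly the vanishing of the obstruction group classifying $C$-equivariant $M$-torsors such as your $L$. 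Either route is a short but nontrivial computation not contained in the paper's toolkit; the paper avoids it entirely by citing CGP. If you want a self-contained proof along your lines, the cobar/Ext computation is the cleaner way to close the gap; otherwise, citing CGP as the paper does is the efficient route.
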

\begin{proof}
We may assume that $S$ is affine. Then the first statement follows from \cite[Proposition~A.8.10~(2)]{CGP}, taking \rref{p:lin_red_field} and \rref{p:bc_reductive} into account. For any $S$-schemes $V,W$ equipped with $C$-actions, let us denote by $\Mor_{C-\text{eq}}(V,W)$ the set of $C$-equivariant morphisms $V\to W$. Then, for any $S$-scheme $U$ (with trivial $C$-action), writing $\tilde{U} = U \times_{\Spec (\Zz)} \Spec(\Zz[\varepsilon]/\varepsilon^2)$, we have
\[
(\Tan_{X/S})^C(U) = \Mor_{C-\text{eq}}(U,\Tan_{X/S}) = \Mor_{C-\text{eq}}(\tilde{U},X) = X^C(\tilde{U}) = \Tan_{X^C/S}(U),
\]
showing that $\Tan_{X^C/S} = (\Tan_{X/S})^C$ (see also \cite[Proposition~A.8.10~(1)]{CGP}). Now, consider the exact sequence of $C$-equivariant vector bundles over $X^C$ (see e.g.\ \cite[Tag~\href{https://stacks.math.columbia.edu/tag/06AA}{06AA}]{stacks})
\[
0 \to \Tan_{X^C/S} \to i^*(\Tan_{X/S}) \to N \to 0.
\]
As $(i^*(\Tan_{X/S}))^C = (\Tan_{X/S})^C$ by \rref{p:fixed_bc_mono}, it follows from \rref{cor:line_red_fixed_0} that $N^C=0$.
\end{proof}

\section{Geometric concentration}
\numberwithin{theorem}{section}
\numberwithin{lemma}{section}
\numberwithin{proposition}{section}
\numberwithin{corollary}{section}
\numberwithin{example}{section}
\numberwithin{notation}{section}
\numberwithin{definition}{section}
\numberwithin{remark}{section}

We will use the following description of the ideal of the fixed locus in the affine case:
\begin{proposition}
\label{lemm:fixed_locus}
Let $k$ be a commutative noetherian ring, and $C$ a group scheme over $k$ such that the $k$-module $k[C]$ is projective. Let $A$ be a commutative $k$-algebra, and assume given a $C$-action on $X = \Spec A$. Denote by $\alpha_C \colon A \to k[C] \otimes_k A$ the corresponding morphism.

Then the subfunctor $X^C \subset X$ is represented by the closed subscheme defined by the ideal $\Zei_A(\rho_C(A))$ of $A$ (see \rref{def:Z_A}), where
\[
\rho_C = \alpha_C - 1 \otimes \id_A \colon A \to k[C] \otimes_k A.
\]
\end{proposition}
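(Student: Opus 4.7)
The plan is to test the proposed identification on affine test schemes $T = \Spec B$, which suffices since a closed subscheme of $X$ is determined by its functor of points restricted to affine schemes. A $k$-scheme morphism $T \to X$ corresponds to a $k$-algebra map $\varphi \colon A \to B$. Because $T$ carries the trivial $C$-action, the coaction morphism associated to $B$ is simply $b \mapsto 1 \otimes b$, so $\varphi$ is $C$-equivariant precisely when $(1_{k[C]} \otimes \varphi) \circ \alpha_C = (1_{k[C]} \otimes \id_B) \circ \varphi$. Equivalently, $(1 \otimes \varphi) \circ \rho_C = 0$ as morphisms $A \to k[C] \otimes_k B$ of $k$-modules.

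Next, I interpret this vanishing condition via the ideals of \rref{def:Z_A}. Set $M = k[C] \otimes_k A$, viewed as an $A$-module through the right-hand factor. Since $k[C]$ is a projective $k$-module, it is a direct summand of a free $k$-module, so $M$ is a direct summand of a free $A$-module, hence is projective over $A$. Using $\varphi$ to regard $B$ as an $A$-algebra, the canonical map $M \otimes_A B \to k[C] \otimes_k B$ is an isomorphism, under which $(1 \otimes \varphi)(\rho_C(a))$ corresponds to the image of $\rho_C(a)$ under the natural map $M \to M \otimes_A B$. Consequently, the $C$-equivariance of $\varphi$ amounts to the assertion that the image of $\rho_C(A) \subset M$ in $M \otimes_A B$ is contained in $\{0\}$.

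The conclusion then follows immediately from \rref{lemm:Z_proj}. Applied to the projective $B$-module $M \otimes_A B$, part (i) of that lemma states that the above containment is equivalent to $\Zei_B(\rho_C(A) \otimes 1) = 0$, and part (ii) identifies this ideal with the one generated by $\varphi(\Zei_A(\rho_C(A)))$ in $B$. Hence $\varphi$ is $C$-equivariant if and only if $\varphi(\Zei_A(\rho_C(A))) = 0$, i.e., iff $\varphi$ factors through the closed subscheme $\Spec(A/\Zei_A(\rho_C(A))) \subset X$. I expect no significant obstacle: the only point that requires a moment's care is the projectivity of $M$ over $A$, which reduces directly to the hypothesis on $k[C]$; everything else is the routine translation between the Hopf-coaction language and the ideal-theoretic language of \rref{def:Z_A}.
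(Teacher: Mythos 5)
Your proof is correct and follows essentially the same route as the paper's: both unwind the $C$-equivariance condition for a test morphism $\varphi\colon A \to B$ to the vanishing of $(\id_{k[C]}\otimes\varphi)\circ\rho_C$, and both then convert this module-theoretic condition into the ideal condition $\varphi(\Zei_A(\rho_C(A)))=0$ by invoking projectivity of $k[C]\otimes_k B$ over $B$ together with Lemma~\ref{lemm:Z_proj}. The only cosmetic difference is that the paper phrases the equivariance condition via the scheme-theoretic equalizer of $C\times_S X\rightrightarrows X$ (introducing the ideal $J$ generated by $\rho_C(A)$ and then implicitly using $\Zei_A(J)=\Zei_A(\rho_C(A))$), whereas you apply Lemma~\ref{lemm:Z_proj} directly to the subset $\rho_C(A)$, which is marginally cleaner but not a different argument.
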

\begin{proof}
Let $S= \Spec k$. Let $x \colon T \to X$ be a morphism of $S$-schemes. Then $x \in X(T)$ belongs to $X^C(T)$ if and only if the composites 
\[
C \times_S T \xrightarrow{\id_C \times x} C \times_S X  \doublerightarrow{a}{p} X
\]
coincide, where $a,p$ are the action and projection morphisms respectively.

Now, the coequalizer of the morphisms of commutative $k$-algebras $\alpha_C, 1 \otimes \id_A \colon A \to k[C] \otimes_k A$ is $(k[C] \otimes_k A)/J$, where $J  \subset k[C] \otimes_k A$ is the ideal generated by the image of the morphism $\rho_C = \alpha_C - 1 \otimes \id_A$. Therefore the equalizer of the morphisms $a,p$ in the category of affine $S$-schemes is the closed subscheme of $C \times_S X$ given by the ideal $J$.

Thus a morphism of commutative $k$-algebras $f\colon A \to B$ belongs to $X^C(\Spec B) \subset X(\Spec B)$ if and only if the morphism $\id_{k[C]} \otimes f \colon k[C] \otimes_k A \to k[C] \otimes_k B$ maps $J$ to $0$. But by \rref{lemm:Z_proj} we have
\[
(\id_{k[C]} \otimes f)(J) = 0 \Longleftrightarrow \Zei_B((\id_{k[C]} \otimes f)(J)) =0 \Longleftrightarrow f(\Zei_A(J))=0.
\]
We deduce that the closed subscheme of $X$ given by the ideal $\Zei_A(J) \subset A$ represents the subfunctor $X^C \subset X$ in the category of affine $S$-schemes, and it follows easily that it does so in the category of $S$-schemes.
\end{proof}

The next statement may be viewed as a form of converse of \rref{lemm:fixed_contained}, and is the technical heart of this paper.

\begin{proposition}
\label{prop:main_affine}
Let $k$ be a commutative noetherian ring and $S=\Spec k$. Let $G$ be a group scheme over $S$, and $C \subset G$ a closed normal subgroup scheme. Assume that the $k$-module $k[C]$ is projective, and that the group scheme $C$ is linearly reductive \rref{def:lin_red}. Let $X=\Spec A$ be an affine noetherian $S$-scheme, equipped with a $G$-action.

Then there exists a $k[G]$-comodule $W$ such that $W^C=0$, and an element $s \in (W \otimes_k A)^G \subset W \otimes_k A$ such that $X^C=\Ze_X(s)$, as closed subschemes of $X$.

If $G$ is linearly reductive, or if $X$ is flat over $S$, then $W$ may be chosen to be of finite type as a $k$-module.
\end{proposition}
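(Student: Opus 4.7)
The plan is to characterize the ideal $I = I_{X^C}\subset A$ intrinsically using linear reductivity of $C$, and then realize it via a $G$-equivariant inclusion dualized to yield an invariant element of $W\otimes_k A$.

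First, apply \rref{lemm:fixed_locus} to obtain $I = \Zei_A(\rho_C(A))$ where $\rho_C = \alpha_C - 1\otimes\id_A$. The map $\rho_C\colon A \to k[C]\otimes_k A$ is $G$-equivariant, with $G$ acting on $k[C]$ by conjugation (well-defined since $C\lhd G$, cf.\ \rref{lemm:inv_is_equ}) and on the tensor product as in \rref{p:tensor_equiv}; both $\alpha_C$ and $1\otimes\id_A$ intertwine these actions. By \rref{prop:split_equalizer} applied to the $C$-comodule $A$, there is a canonical $k$-linear retraction $\rho_A\colon A \to A^C$, and naturality \eqref{eq:funct_rho} applied to the $C$-equivariant automorphisms of $A$ coming from the $G$-action shows $\rho_A$ is $G$-equivariant. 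Setting $N = \ker\rho_A$ gives a $G$-comodule decomposition $A = A^C \oplus N$ with $N^C = 0$ (by \rref{cor:line_red_fixed_0}).

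The key algebraic identity is $I = N\cdot A$. For $N\subset I$: applied to $a\in N$, the $A$-linear functional $\varphi = (\rho_{k[C]}\circ\chi_C)\otimes\id_A$ sends $\rho_C(a)$ to $\rho_A(a) - a = -a$, using the formula $\rho_A = (\rho_{k[C]}\otimes\id_A)\circ(\chi_C\otimes\id_A)\circ\alpha_C$ from the construction in \rref{prop:split_equalizer}. For $I\subset N\cdot A$: writing $a = \rho_A(a) + a'$ with $a'\in N$ gives $\rho_C(a) = \rho_C(a')$, and since $N$ is a $C$-subcomodule one has $\rho_C(a')\in k[C]\otimes_k N$, so every $A$-linear evaluation lies in $A\cdot N$.

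Since $A$ is noetherian, choose $b_1,\ldots,b_n\in N$ generating $I$, and let $M\subset A$ be the $k[G]$-subcomodule generated by $\{b_j\}$; then $M\subset N$, so $M^C = 0$. Under the special hypothesis that $G$ is linearly reductive or that $A$ is $k$-flat, standard local-finiteness arguments for comodules allow $M$ to be chosen finitely generated and $k$-projective. Setting $W = M^\vee$ with its induced $k[G]$-comodule structure (via the antipode), one has $W^C\simeq (M^C)^\vee = 0$, and the $G$-equivariant inclusion $M\hookrightarrow A$ corresponds under the adjunction $(\Hom_k(M,A))^G \simeq (M^\vee\otimes_k A)^G$ to the canonical element $s = \sum_i e_i^\vee\otimes e_i$ (for a $k$-basis $\{e_i\}$ of $M$), which is $G$-invariant because it represents $\id_M$. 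Evaluating any $A$-linear $\psi\otimes\id_A$ with $\psi\in W^\vee\simeq M$ against $s$ recovers $\psi\in M\subset A$, whence $M\subset\Zei_A(s)$, so $\Zei_A(s)\supset I$; the reverse inclusion $\Zei_A(s)\subset I$ follows from \rref{lemm:fixed_contained} applied with $\Fc = W$ since $W^C = 0$. In the general case one takes $W$ possibly infinite, e.g., a direct sum of duals of finite-type $k$-projective $G$-subcomodules of $N$ selected to cover the generators. The main obstacle is precisely the finite-type step: securing a finitely generated $k$-projective $G$-subcomodule containing a prescribed finite set of elements, which is where the special hypotheses enter.
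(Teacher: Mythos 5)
Your approach is genuinely different from the paper's. Where the paper works with the map $\sigma_G\colon A\to P_G\otimes_k A$ into the quotient comodule $P_G=k[G]/(k[G])^C$ (whose $G$-equivariance is immediate from \rref{lemm:gamma_inv}), you propose to split $A=A^C\oplus N$ and establish the pleasant identity $I_{X^C}=N\cdot A$. That identity, assuming the decomposition, is correctly derived and is a nice insight. However, there are two genuine gaps.

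First, the key claim that the retraction $\rho_A\colon A\to A^C$ is $G$-equivariant (equivalently, that $N=\ker\rho_A$ is a $k[G]$-subcomodule, which your construction of $M\subset N$ requires) is asserted with an incorrect justification. The naturality \eqref{eq:funct_rho} applies to morphisms in $\Qcoh^C(S)$, but the $G$-action on $A$ is \emph{not} by $C$-equivariant automorphisms: because $C\lhd G$, an element $g$ acts $C$-semilinearly, twisting the $C$-action by conjugation, so $g$ is not a morphism of $C$-comodules. What you actually need is that the retraction $\rho\colon k[C]\to k$ from \rref{def:lin_red} is equivariant for the $G$-conjugation coaction on $k[C]$ (and, already for $C$-equivariance of $\rho_A$, that $\rho\circ\chi_C=\rho$); this is plausible via the uniqueness in \rref{prop:unique_retraction} but is not established by the argument you give and is not a formal consequence of the definitions. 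The paper's route via $\sigma_G$ and \rref{lemm:gamma_inv} deliberately avoids ever invoking equivariance of $\rho_A$.

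Second, the dualization $W=M^\vee$ requires $M$ to be finitely generated and $k$-projective, both for $M^\vee$ to carry the induced comodule structure and for the identification $\Hom_k(M,A)\simeq M^\vee\otimes_k A$ and $(M^\vee)^C\simeq(M^C)^\vee$. You acknowledge this is where the special hypotheses enter, but the general case is not actually handled: a $k[G]$-subcomodule of $A$ generated by finitely many elements (which local finiteness provides) need not be $k$-projective, and your fallback of taking ``duals of finite-type $k$-projective $G$-subcomodules of $N$'' presupposes such subcomodules exist in sufficient supply, which is not available over an arbitrary noetherian $k$. The paper circumvents this entirely by taking $W$ to be a (possibly non-projective) quotient $(P_G)^{\oplus n}$ with no dualization, and only at the very end uses local finiteness together with either linear reductivity of $G$ or flatness of $A$ to shrink $W$ to a finite-type comodule. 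In short, the architecture of your proof is attractive but rests on two unproved equivariance/projectivity claims that are precisely the difficulties the paper's construction is designed to avoid.
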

\begin{proof}
We view $k[G]$, resp.\ $k[C]$, as a $k[G]$-comodule, resp.\ $k[C]$-comodule, via the coaction given by the group operation (as in \rref{ex:OS_G}). Recall from \eqref{eq:fixed_Oc_S_G} that $(k[C])^C=k$. Consider the exact sequences of $k[G]$-comodules and $k[C]$-comodules respectively (see \rref{lemm:inv_is_equ}), which define $P_G$ and $P_C$,
\[
0 \to (k[G])^C \to k[G] \xrightarrow{\pi_G} P_G \to 0 \quad\text{and} \quad 0 \to k \to k[C] \xrightarrow{\pi_C} P_C \to 0.
\]
As $C$ is linearly reductive, it follows from \rref{cor:line_red_fixed_0} that
\begin{equation}
\label{eq:P_G_fixed}
(P_G)^C =0.
\end{equation}
The inclusion $C \subset G$ induces morphisms of $k$-modules
\[
\varphi \colon k[G] \to k[C] \quad \text{and} \quad \psi \colon P_G \to P_C.
\]
The $G$-action, resp.\ $C$-action, on $X$ is given by a morphism of $k$-algebras $\alpha_G \colon A \to k[G] \otimes_k A$, resp.\ $\alpha_C \colon A \to k[C] \otimes_k A$, making $A$ a $k[G]$-comodule, resp.\ $k[C]$-comodule. Consider the composite
\begin{equation}
\label{eq:def_sigma_G}
\sigma_G\colon A \xrightarrow{\alpha_G} k[G] \otimes_k A \xrightarrow{\chi_G \otimes \id_A} k[G] \otimes_k A\xrightarrow{\pi_G \otimes \id_A} P_G \otimes_k A,
\end{equation}
where $\chi_G \colon k[G] \to k[G]$ is the antipode of the Hopf algebra $k[G]$. Note that the composite $(\chi_G \otimes \id_A) \circ \alpha_G$ is the morphism denoted $\gamma_A$ in \rref{p:gamma}. Since $\pi_G \otimes \id_A$ is a morphism of $k[G]$-comodules, it follows from \rref{lemm:gamma_inv} that
\begin{equation}
\label{eq:sigma_equiv}
\sigma_G(A) \subset (P_G \otimes_k A)^G.
\end{equation}

Let $J \subset A$ be the ideal defining the closed subscheme $X^C \subset X$. It follows from \rref{lemm:fixed_locus} that (in the notation of \rref{def:Z_A})
\begin{equation}
\label{eq:Z_rho_C}
J = \Zei_A(\rho_C(A)), \quad \text{where } \rho_C= \alpha_C -1 \otimes \id_A \colon A \to k[C] \otimes_k A.
\end{equation}
The counit $\varepsilon_C \colon k[C] \to k$ is a $k$-linear retraction of $\iota_C \colon k \to k[C]$, hence it induces a $k$-linear section $\zeta \colon P_C \to k[C]$ of $\pi_C$, which verifies $\zeta \circ \pi_C +  \iota_C \circ\varepsilon_C= \id_{k[C]}$. Now it follows from \dref{p:def_equiv_module}{p:def_equiv_module:1} that the composite
\[
A \xrightarrow{\alpha_C} k[C] \otimes_k A \xrightarrow{\varepsilon_C \otimes \id_A} k \otimes_k A \xrightarrow{\iota_C \otimes \id_A} k[C] \otimes_k A
\]
equals $1 \otimes \id_A$, so that
\[
\rho_C=\alpha_C - 1\otimes \id_A = ((\id_{k[C]}-\iota_C \circ\varepsilon_C)\otimes \id_A) \circ \alpha_C =  ((\zeta \circ \pi_C) \otimes \id_A) \circ \alpha_C.
\]
As $\alpha_C = (\varphi \otimes \id_A) \circ \alpha_G$, we obtain
\begin{equation}
\label{eq:rho_alpha}
\rho_C = ((\zeta \circ \pi_C \circ \varphi) \otimes \id_A) \circ \alpha_G.
\end{equation}

Next, the commutative diagrams of $S$-schemes (recall that $C \subset G$ is normal)
\[ \xymatrix{
C \times_S G \ar[rr]^-{(c,g) \mapsto g}  \ar[d]_{(c,g) \mapsto (g^{-1}c^{-1}g,g^{-1})} && G \ar[d]^{g \mapsto g^{-1}} &&&& C \times_S G \ar[rr]^-{(c,g) \mapsto cg}  \ar[d]_{(c,g) \mapsto (g^{-1}c^{-1}g,g^{-1})} && G \ar[d]^{g \mapsto g^{-1}}\\ 
C \times_S G \ar[rr]^-{(c,g) \mapsto g}  && G &&&& C \times_S G  \ar[rr]^-{(c,g) \mapsto cg} && G
}\]
yield a commutative diagram of $k$-modules, where $\beta$ is the $C$-coaction morphism of $k[G]$, and $\tau$ is induced by the morphism $C \times_S G \to C \times_S G, (c,g) \mapsto (g^{-1}c^{-1}g,g^{-1})$,
\[ \xymatrix{
k[G]\ar[rrr]^-{\beta - 1 \otimes \id_{k[G]}} \ar[d]_{\chi_G} &&& k[C] \otimes_k k[G] \ar[d]^\tau \\ 
k[G] \ar[rrr]^-{\beta - 1 \otimes \id_{k[G]}} &&& k[C] \otimes_k k[G]
}\]
It follows that the morphism $\chi_G$ maps $k[G]^C$ into $k[G]^C$, hence descends to a morphism of $k$-modules $\widetilde{\chi}_G \colon P_G \to P_G$ such that $\pi_G \circ \chi_G = \widetilde{\chi}_G \circ \pi_G$. Thus
\begin{equation}
\label{eq:double_antipode}
\pi_C \circ \varphi = \psi \circ \pi_G = \psi \circ \pi_G \circ \chi_G \circ \chi_G = \psi \circ \widetilde{\chi_G} \circ \pi_G \circ \chi_G.
\end{equation}

Combining \eqref{eq:rho_alpha}, \eqref{eq:double_antipode} and \eqref{eq:def_sigma_G}, we deduce that
\begin{equation}
\label{eq:rho_sigma}
\rho_C = ((\zeta \circ \psi \circ \widetilde{\chi_G}) \otimes \id_A) \circ \sigma_G.
\end{equation}
Thus, in view of \rref{lemm:Z_funct}, we have $\Zei_A(\rho_C(A)) \subset \Zei_A(\sigma_G(A))$. Using \eqref{eq:Z_rho_C} and \eqref{eq:sigma_equiv}, we deduce that $J \subset \Zei_A((P_G \otimes_k A)^G)$. On the other hand, by \rref{eq:P_G_fixed} and \rref{lemm:fixed_contained} (in view of \rref{p:Z_affine}) we have $\Zei_A((P_G \otimes_k A)^C) \subset J$. As $\Zei_A((P_G \otimes_k A)^G) \subset \Zei_A((P_G \otimes_k A)^C)$, we conclude that
\[
J = \Zei_A((P_G \otimes_k A)^G).
\]
Since the ideal $J \subset A$ is finitely generated, there exist $s_1,\dots,s_n \in (P_G \otimes_k A)^G$ such that $\Zei_A(\{s_1,\dots,s_n\})=J$. Letting $W = (P_G)^n$ and $s=(s_1,\dots,s_n) \in (W \otimes_k A)^G$ we have $\Zei_A(\{s\})=J$, and by \eqref{eq:P_G_fixed} we have $W^C=0$. This concludes the proof of the first statement.

Finally since $W$ is the union of its finitely generated $k[G]$-comodules \cite[\S1.5]{Serre-Groupes_Grothendieck}, it follows that there exists a finitely generated $k[G]$-subcomodule $W_0 \subset W$ such that $s$ belongs to the image $Q$ of the induced morphism $W_0 \otimes_k A \to W \otimes_k A$. If $G$ is linearly reductive, then the morphism $(W_0 \otimes_k A)^G \to Q^G$ is surjective by \rref{cor:lin_red_inv_exact}; if instead $A$ is flat over $k$, then $W_0\otimes_k A \to Q$ is an isomorphism, hence so is $(W_0 \otimes_k A)^G \to Q^G$. In any case we find an element $s_0 \in (W_0 \otimes_k A)^G$ mapping to $s \in Q^G \subset (W \otimes_k A)^G$. Note that $(W_0)^C=0$, because $W_0 \subset W$. We have $\Zei_A(\{s\}) \subset \Zei_A(\{s_0\})$ by \rref{lemm:Z_funct}, while $\Zei_A(\{s_0\}) \subset J$ by \rref{lemm:fixed_contained}. As $J= \Zei_A(\{s\})$, we conclude that $\Zei_A(\{s_0\}) = J$.
\end{proof}

\begin{definition}[See {\cite[Definition~2.1]{Thomason-resolution}}]
\label{def:resolution}
A group scheme $G$ over $S$ is said to have the \emph{resolution property} if every $G$-equivariant coherent $\Oc_S$-module is a $G$-equivariant quotient of a $G$-equivariant locally free of finite rank $\Oc_S$-module.
\end{definition}

We refer to \cite[\S2]{Thomason-resolution} for a discussion of cases where the resolution property holds (which include the case when $S$ is the spectrum of a field).

\begin{lemma}
\label{lemm:quotien_free}
Let $G$ be a group scheme over $S$, and $C \subset G$ a closed normal subgroup. Assume that $G$ has the resolution property and that $C$ is linearly reductive \rref{def:lin_red}. Let $X$ be a noetherian quasi-affine scheme over $S$, equipped with a $G$-action such that $C$ acts trivially on $X$. Let $\Fc$ be a $G$-equivariant coherent $\Oc_X$-module such that $\Fc^C=0$. Then there exists a $G$-equivariant locally free of finite rank $\Oc_S$-module $\Ec$ such that $\Ec^C=0$ and a surjective morphism of $G$-equivariant quasi-coherent $\Oc_X$-modules $\pi_X^*\Ec \to \Fc$.
\end{lemma}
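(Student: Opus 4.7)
The plan is to produce an arbitrary $G$-equivariant locally free surjection and then modify it to kill the $C$-fixed part, using that $C$ is linearly reductive and acts trivially on $X$.

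First, I would reduce to finding a $G$-equivariant coherent $\Oc_S$-module $\Mc$ together with a surjection $\pi_X^*\Mc \twoheadrightarrow \Fc$ (without yet worrying about the $C$-fixed subsheaf). Since $X$ is quasi-affine over $S$, the morphism $\pi_X$ factors as an open immersion $j \colon X \hookrightarrow \Spec_S (\pi_{X*}\Oc_X)$ followed by an affine morphism; combining the surjectivity of the counit for affine morphisms with the fact that $j^*j_* = \id$, one sees that $\pi_X^*\pi_{X*}\Fc \to \Fc$ is surjective. By \rref{p:pf_equiv}, $\pi_{X*}\Fc$ is a $G$-equivariant quasi-coherent $\Oc_S$-module, and by a standard result (cf.\ \cite[\S1.5]{Serre-Groupes_Grothendieck}, already invoked in the proof of \rref{prop:main_affine}) it is the filtered union of its $G$-equivariant coherent $\Oc_S$-submodules $\Mc_i$. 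Since $\pi_X^*$ commutes with filtered colimits and $\Fc$ is a coherent sheaf on the noetherian (hence quasi-compact) scheme $X$, for some index $i$ the map $\pi_X^*\Mc_i \to \Fc$ is already surjective; set $\Mc = \Mc_i$.

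Next, applying the resolution property \rref{def:resolution} to $\Mc$ produces a $G$-equivariant locally free of finite rank $\Oc_S$-module $\Ec'$ together with a surjective morphism $\Ec' \twoheadrightarrow \Mc$ in $\Qcoh^G(S)$, and hence a surjection $\pi_X^*\Ec' \twoheadrightarrow \Fc$ in $\Qcoh(X;G)$.

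Finally, I would set $\Ec = \Ec'/\Ec'^C$ and verify it has the required properties. By \rref{lemm:inv_is_equ} the submodule $\Ec'^C \subset \Ec'$ is $G$-invariant, so $\Ec$ inherits a $G$-equivariant structure. Since $C$ is linearly reductive, \rref{prop:split_equalizer} provides a retraction $\rho_{\Ec'}\colon \Ec' \to \Ec'^C$ in $\Qcoh(S)$, exhibiting $\Ec'^C$ as a direct summand of the locally free of finite rank $\Oc_S$-module $\Ec'$; hence both $\Ec'^C$ and $\Ec$ are locally free of finite rank. By \rref{cor:line_red_fixed_0} we have $\Ec^C = 0$. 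To see that the surjection $\pi_X^*\Ec' \twoheadrightarrow \Fc$ factors through $\pi_X^*\Ec$, note that $\pi_X^*(\Ec'^C) = (\pi_X^*\Ec')^C$ by \rref{lemm:pullback_fixed}, and since $C$ acts trivially on $X$ the morphism $\pi_X^*\Ec' \to \Fc$ is $C$-equivariant and thus sends $(\pi_X^*\Ec')^C$ into $\Fc^C = 0$. The resulting surjection $\pi_X^*\Ec \twoheadrightarrow \Fc$ then has the required properties.

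The main obstacle is the approximation argument in the first step: one needs both the surjectivity of the counit for a quasi-affine morphism and the existence of a filtered exhaustion of $\pi_{X*}\Fc$ by $G$-equivariant coherent subsheaves. Once these are in place, the rest is a direct application of the machinery set up earlier in the section.
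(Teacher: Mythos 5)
Your proof is correct and follows essentially the same route as the paper's, relying on the same ingredients (the quasi-affine counit $\pi_X^*\pi_{X*}\Fc \to \Fc$, exhaustion of $\pi_{X*}\Fc$ by $G$-equivariant coherent subsheaves, the resolution property, and killing the $C$-fixed summand via \rref{prop:split_equalizer} and \rref{cor:line_red_fixed_0}), with only a cosmetic reorganization: you extract a single coherent subsheaf of $\pi_{X*}\Fc$ before applying the resolution property and verify the factorization through the quotient after pulling back to $X$, whereas the paper resolves all coherent subsheaves at once, quotients by the $C$-fixed parts at the $\Oc_S$-level (using $(\pi_{X*}\Fc)^C=0$), and extracts a finite subsum at the very end. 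One minor note: the exhaustion of $\pi_{X*}\Fc$ by $G$-equivariant coherent subsheaves in this general (non-affine) setting is the content of \cite[Lemma~2.10]{Hoyois-equivariant} (or \cite[Lemma~1.4]{Thomason-resolution}), which is the reference the paper actually invokes here; the Serre reference you give is the affine comodule version.
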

\begin{proof}
Let us write $\pi=\pi_X \colon X \to S$. The $G$-equivariant quasi-coherent $\Oc_S$-module $\pi_*\Fc$ is the colimit of its $G$-equivariant coherent submodules \cite[Lemma~2.10]{Hoyois-equivariant} (see also \cite[Lemma~1.4]{Thomason-resolution}). Since $G$ has the resolution property, it follows that there exists a family of $G$-equivariant locally free of finite rank $\Oc_S$-modules $\Ec'_\beta$, together with morphisms of $G$-equivariant $\Oc_S$-modules $\Ec'_\beta \to \pi_*\Fc$, such that the induced morphism $\bigoplus_{\beta} \Ec'_\beta \to \pi_*\Fc$ is surjective. As $(\pi_*\Fc)^C=\pi_*(\Fc^C)=0$ by \rref{p:pf_fixed}, this morphism factors through a surjective morphism $\bigoplus_{\beta} \Ec_\beta \to \pi_*\Fc$, where $\Ec_\beta = \Ec'_\beta/(\Ec'_\beta)^C$. Since $C$ is linearly reductive, we have $(\Ec_\beta)^C=0$ by \rref{cor:line_red_fixed_0}. Moreover it follows from \rref{prop:split_equalizer} that $(\Ec'_\beta)^C$ is a direct summand of $\Ec_\beta'$ in $\Qcoh(S)$, and in particular that the $\Oc_S$-module $\Ec_\beta$ is locally free of finite rank. Since $\pi$ is quasi-affine, the natural morphism of $\Oc_X$-modules $\pi^*\pi_* \Fc \to \Fc$ is surjective. We thus have a surjective morphism $\bigoplus_{\beta} \pi^*\Ec_\beta \to \pi^*\pi_*\Fc \to \Fc$. Since $\Fc$ is coherent and $X$ is noetherian, there exists a finite set of indices $B$ such that $\bigoplus_{\beta \in B} \pi^*\Ec_\beta \to \Fc$ is surjective. Then the $\Oc_S$-module $\Ec= \bigoplus_{\beta \in B} \Ec_\beta$ is locally free of finite rank, and we have $\Ec^C=\bigoplus_{\beta \in B} (\Ec_\beta)^C=0$.
\end{proof}

\begin{theorem}
\label{th:conc_main}
Let $f\colon T \to S$ be a quasi-affine morphism, where $T$ is affine and noetherian. Let $G$ be a group scheme over $S$, which is linearly reductive \rref{def:lin_red} and has the resolution property \rref{def:resolution}. Let $C$ be a closed normal subgroup scheme of $G$, which is linearly reductive. Assume that the $\Oc_S$-module $\Oc_S[C]$ is locally projective. Let $X$ be an affine noetherian $T$-scheme with a $G$-action.

Then there exists a $G$-equivariant vector bundle $V\to S$ such that $V^C=0$, and a $G$-equivariant section $s$ of the pullback of $V$ to $X$ such that $X^C = \Ze_X(s)$.
\end{theorem}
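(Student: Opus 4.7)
The strategy is to feed Proposition~\ref{prop:main_affine} into Lemma~\ref{lemm:quotien_free}: the former produces an ``abstract'' $G$-equivariant section in a comodule that does the job over the affine scheme $X$, and the latter promotes this comodule to the sheaf of sections of a genuine $G$-equivariant vector bundle on $S$. Write $T=\Spec k$ and $X=\Spec A$. By base change (\rref{p:bc_reductive} and \rref{p:bc_loc_proj}), the closed normal subgroup $C_T\subset G_T$ satisfies the hypotheses of Proposition~\ref{prop:main_affine} for the $G_T$-action on $X$, and $G_T$ itself is linearly reductive. Applying that proposition yields a $k[G_T]$-comodule $W$, \emph{finitely generated} as a $k$-module, with $W^{C_T}=0$, together with an element $\tilde{s}\in(W\otimes_k A)^{G_T}$ such that $\Ze_X(\tilde{s})=X^{C_T}=X^C$. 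Via \rref{p:alt_def_equiv_modules}, $W$ corresponds to a $G$-equivariant coherent $\Oc_T$-module $\mathcal{W}$ with $\mathcal{W}^C=0$.

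Now view $T$ as a noetherian quasi-affine $S$-scheme carrying the trivial $G$-action. Lemma~\ref{lemm:quotien_free} applied to $\mathcal{W}$ produces a $G$-equivariant locally free $\Oc_S$-module $\Ec$ of finite rank with $\Ec^C=0$, together with a surjection $\varphi\colon f^*\Ec\twoheadrightarrow\mathcal{W}$ in $\Qcoh^G(T)$. Since $T$ is affine, taking global sections gives a surjection of $k[G_T]$-comodules $E_0\twoheadrightarrow W$, where $E_0=H^0(T,f^*\Ec)$. Tensoring over $k$ with $A$ produces a surjection of $k[G_T]$-comodules $E_0\otimes_k A\twoheadrightarrow W\otimes_k A$. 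Since $G_T$ is linearly reductive, the functor $(-)^{G_T}$ is exact by \rref{cor:lin_red_inv_exact}, so $\tilde{s}$ lifts to some $s\in(E_0\otimes_k A)^{G_T}$.

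Let $V\to S$ be the $G$-equivariant vector bundle whose sheaf of sections is $\Ec$; then $V^C=0$ by the discussion preceding \rref{p:smooth_fixed_locus}. Under the identification $E_0\otimes_k A\cong H^0(X,\pi_X^*\Ec)$, the element $s$ corresponds to a $G$-equivariant section of the pullback of $V$ to $X$. The inclusion $\Ze_X(s)\subset\Ze_X(\tilde{s})=X^C$ is immediate from \rref{p:Ze:funct} applied to $\pi_X^*\varphi$, while the reverse inclusion $X^C\subset\Ze_X(s)$ follows from Lemma~\ref{lemm:fixed_contained} applied with $\Fc=\Ec$ and $Y=X^C$, yielding the desired equality $\Ze_X(s)=X^C$.

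The principal subtlety lies in the twofold use of linear reductivity of $G$: once in Proposition~\ref{prop:main_affine} to ensure $W$ may be taken finitely generated over $k$ (without which Lemma~\ref{lemm:quotien_free} could not be invoked, as it requires coherence), and a second time, via exactness of $(-)^{G_T}$, to lift $\tilde{s}$ equivariantly. The quasi-affineness of $f$ is precisely what allows Lemma~\ref{lemm:quotien_free} to cover the coherent sheaf $\mathcal{W}$ on $T$ by a locally free sheaf pulled back from the possibly non-affine base $S$.
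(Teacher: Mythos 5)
Your proof is correct and follows exactly the same route as the paper's: apply Proposition~\rref{prop:main_affine} (using linear reductivity of $G$ to get a coherent $\Fc$ on $T$), promote $\Fc$ to a locally free $\Oc_S$-module via Lemma~\rref{lemm:quotien_free}, lift the section using exactness of $(-)^G$ from Corollary~\rref{cor:lin_red_inv_exact} together with affineness of $\pi$ and $T$, and conclude the two inclusions of zero loci via \rref{p:Ze:funct} and Lemma~\rref{lemm:fixed_contained}. You have merely rephrased the lifting step in module-theoretic language (global sections and tensoring over $k$) rather than via pushforwards, which is an equivalent packaging; your concluding paragraph correctly identifies the two essential uses of linear reductivity and the role of quasi-affineness of $f$.
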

\begin{proof}
Let us denote by $\pi \colon X \to T$ the structure morphism. Since $T$ is affine, it follows from \rref{p:bc_loc_proj} that the $\Oc_T$-module $\Oc_T[C_T]$ is projective. By \rref{prop:main_affine} we may find a $G$-equivariant coherent $\Oc_T$-module $\Fc$ such that $\Fc^C=0$, and a $G$-equivariant section $t$ of $\pi^*\Fc$ over $X$ such that $\Ze_X(t)=X^C$. By \rref{lemm:quotien_free} we may find a $G$-equivariant locally free of finite rank $\Oc_S$-module $\Ec$ such that $\Ec^C=0$, together with a surjective morphism of $G$-equivariant quasi-coherent $\Oc_T$-modules $f^*\Ec \to \Fc$. As $\pi$ is affine, the morphism of $\Oc_S$-modules $\pi_*\pi^*f^*\Ec \to \pi_*\pi^*\Fc$ is surjective, hence so is $(\pi_*\pi^*f^*\Ec)^G \to (\pi_*\pi^*\Fc)^G$ by \rref{cor:lin_red_inv_exact}. As $T$ is affine, this implies that the section $t$ lifts to a $G$-equivariant section $s$ of $\pi^*f^* \Ec$ over $X$. It follows from \rref{p:Ze:funct} that $\Ze_X(s) \subset \Ze_X(t)=X^C$. On the other hand, noting that $(f^*\Ec)^C=f^*(\Ec^C)=0$ by \rref{lemm:pullback_fixed}, we have $X^C \subset \Ze_X(s)$ by \rref{lemm:fixed_contained}, and so we conclude that $X^C=\Ze_X(s)$. Thus we can let $V \to S$ be the $G$-equivariant vector bundle whose sheaf of sections is $\Ec$.
\end{proof}

\begin{remark}
\label{rem:unipotent}
Let $G$ be a nontrivial group scheme over $S$. Assume that $G$ is unipotent, i.e.\ that the only $G$-equivariant vector bundle $V \to S$ such that $V^G=0$ is $V=0$. Set $X=G$ with the $G$-action given by left multiplication, so that $X^G=\varnothing$. This shows that \rref{th:conc_main} fails when $G$ is nontrivial and unipotent, instead of linearly reductive.
\end{remark}

\section{Motivic concentration}
\label{sect:motivic}
\numberwithin{theorem}{subsection}
\numberwithin{lemma}{subsection}
\numberwithin{proposition}{subsection}
\numberwithin{corollary}{subsection}
\numberwithin{example}{subsection}
\numberwithin{notation}{subsection}
\numberwithin{definition}{subsection}
\numberwithin{remark}{subsection}

Let $S$ be a noetherian scheme, and $G$ a (flat, affine and finite type) group scheme over $S$. From now on, we will assume that $G$ is linearly reductive \rref{def:lin_red} and has the resolution property \rref{def:resolution}.

\subsection{Euler classes}
\begin{para}
\label{def:Sch}
We denote by $\Sch^G_S$ the full subcategory of the category of $G$-equivariant schemes over $S$ whose objects are the $G$-quasi-projective $S$-schemes \cite[Definition~2.5~(2)]{Hoyois-equivariant}. We denote by $\Sm^G_S \subset \Sch^G_S$ the full subcategory whose objects are additionally smooth over $S$. When $G=1$ is the trivial group, we will write $\Sch_S,\Sm_S$ instead of $\Sch^G_S,\Sm^G_S$.
\end{para}

\begin{para}
When $X \in \Sch^G_S$, we will denote by $\SH^G(X)$ the homotopy category of the stable $\infty$-category of motivic $G$-spectra over $X$ defined by Hoyois in \cite[\S7]{Hoyois-equivariant}. (Note that by \rref{rem:lin_red}, the $S$-group scheme $G$ is tame in the sense of \cite[Definition~2.26]{Hoyois-equivariant}.)
\end{para}

\begin{remark}
The category $\SH^G(X)$ is more generally defined when $X$ is only assumed Nisnevich-locally $G$-quasi-projective over $S$, and most of the results below extend to this setting. In the interest of simplicity, we will only consider $G$-quasi-projective schemes over $S$ in this paper.
\end{remark}

\begin{definition}
\label{def:Euler}
Let $V$ be a $G$-equivariant vector bundle over $X \in \Sch^G_S$, and $z \colon X \to V$ its zero-section. Let $V^\circ = V \smallsetminus z(X)$. Consider the Thom space $\Th V = V/V^\circ$ (as pointed motivic $G$-space \cite[\S3]{Hoyois-equivariant}). We will write
\[
\Sph^V=\Su \Th V\in \SH^G(X).
\]
Setting for $A\in \SH^G(X)$
\[
\Sigma^V A = \Sph^V \wedge A
\]
defines a functor $\Sigma^V \colon \SH^G(X) \to \SH^G(X)$. We consider the morphism in $\SH^G(X)$
\[
e_V \colon \Un_X=\Sup X \xrightarrow{\Su_+ z} \Su_+ V \to (\Su_+ V)/(\Su_+ V^\circ) = \Su \Th V =\Sph^V.
\]
For $A \in \SH^G(X)$, we will write in $SH^G(X)$
\[
e_{V,A} \colon A \simeq \Un_X \wedge A \xrightarrow{e_V \wedge \id_A} \Sph^V \wedge A \simeq \Sigma^VA.
\]
We will also use this notation when $A$ is a functor taking values in $\SH^G(X)$. In an attempt to lighten the notation, when $f \colon Y \to X$ is a $G$-equivariant morphism we will often write $\Sigma^V,e_V,e_{V,A}$ instead of $\Sigma^{f^*V},e_{f^*V},e_{f^*V,A}$.
\end{definition}

\begin{para}
\label{p:triangle_euler}
In the situation of \rref{def:Euler}, let us denote by $\pi \colon V^\circ \to X$ the projection. Then we have a distinguished triangle in $\SH^G(X)$
\[
\Sup V^\circ \xrightarrow{\Sup \pi} \Un_X \xrightarrow{e_V} \Sph^V \to \Sigma^{1,0} \Sup V^{\circ}.
\]
\end{para}

\begin{para}
\label{p:Euler_sum}
Let $V,W$ be $G$-equivariant vector bundles over $X \in \Sch^G_S$. The natural isomorphism of motivic $G$-spaces $\Th V \wedge \Th W \simeq \Th(V \oplus W)$ induces an isomorphism
\begin{equation}
\label{eq:Th_sum}
\Sph^V \wedge \Sph^W \simeq \Sph^{V \oplus W}\in \SH^G(X),
\end{equation}
under which the morphism $e_V \wedge e_W$ corresponds to $e_{V \oplus W}$.

The isomorphism \eqref{eq:Th_sum} induces an isomorphism of functors $\Sigma^V \circ \Sigma^W \simeq \Sigma^{V \oplus W}$, and for any $A \in \SH^G(X)$ the morphism $e_{V \oplus W,A}$ factors as
\[
A \xrightarrow{e_{V,A}} \Sigma^VA \xrightarrow{\Sigma^V(e_{W,A})} \Sigma^V\Sigma^W A \simeq \Sigma^{V \oplus W}A.
\]
\end{para}

\begin{lemma}
\label{lemm:e_section}
Let $V$ be a $G$-equivariant vector bundle over $X \in \Sch^G_S$. If $V$ admits a $G$-equivariant nowhere vanishing section, then $e_V=0$ in $\SH^G(X)$.
\end{lemma}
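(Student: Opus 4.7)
The plan is to use the distinguished triangle from \rref{p:triangle_euler}, namely
\[
\Sup V^\circ \xrightarrow{\Sup \pi} \Un_X \xrightarrow{e_V} \Sph^V \to \Sigma^{1,0} \Sup V^{\circ},
\]
and show that under the hypothesis the morphism $\Sup \pi$ is a split epimorphism, which forces $e_V = 0$ by a standard property of distinguished triangles.

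First I would unpack what a $G$-equivariant nowhere vanishing section of $V$ means geometrically. By the paragraph preceding \rref{lemm:Ze_funct}, a nowhere vanishing section of $V$ is equivalent to a scheme morphism $s \colon X \to V$ factoring through $V^\circ = V \smallsetminus z(X)$, and by \rref{p:eq_sections} the $G$-equivariance of $s$ as a section of the sheaf $\Vc$ matches the $G$-equivariance of the corresponding morphism $X \to V$. Hence we obtain a $G$-equivariant morphism $\tilde s \colon X \to V^\circ$ with $\pi \circ \tilde s = \id_X$, where $\pi \colon V^\circ \to X$ is the projection.

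Applying $\Sup$, which is a functor on pointed $G$-spaces with a $+$ basepoint added, we obtain morphisms $\Sup \tilde s \colon \Un_X \to \Sup V^\circ$ and $\Sup \pi \colon \Sup V^\circ \to \Un_X$ in $\SH^G(X)$ with $\Sup \pi \circ \Sup \tilde s = \id_{\Un_X}$. Thus $\Sup \pi$ admits a section in $\SH^G(X)$.

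Now I conclude using the distinguished triangle. Since $e_V \circ \Sup \pi = 0$ in any triangulated category (two consecutive morphisms in a distinguished triangle compose to zero), we obtain
\[
e_V = e_V \circ \Sup \pi \circ \Sup \tilde s = 0 \circ \Sup \tilde s = 0,
\]
as desired. There is no real obstacle here; the only point requiring a moment of care is the compatibility between the two notions of $G$-equivariant section (the scheme-theoretic one and the sheaf-theoretic one), which is precisely what \rref{p:eq_sections} provides.
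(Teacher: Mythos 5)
Your proof is correct and follows the same route as the paper: the nowhere vanishing $G$-equivariant section gives a section of $\pi\colon V^\circ\to X$, and the distinguished triangle of \rref{p:triangle_euler} then forces $e_V=0$. You have merely spelled out the details (via \rref{p:eq_sections} and the splitting of $\Sup\pi$) that the paper leaves implicit.
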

\begin{proof}
The given section factors as a section of the projection $V^\circ \to X$, hence the lemma follows from the distinguished triangle of \rref{p:triangle_euler}.
\end{proof}

\begin{lemma}
\label{lemm:Euler_closed_open}
Let $X \in \Sch^G_S$. Let $Y$ be a $G$-invariant closed subscheme of $X$, and $U$ its open complement. If $\alpha,\beta$ are morphisms in $\SH^G(X)$ such that $\alpha$ restricts to zero in $\SH^G(Y)$ and $\beta$ restricts to zero in $\SH^G(U)$, then $\alpha \wedge \beta =0$ in $\SH^G(X)$.
\end{lemma}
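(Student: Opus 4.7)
The plan is to exploit the recollement (localization) structure of $\SH^G(X)$ associated with the complementary closed/open pair $(Y, U)$. Let $i \colon Y \to X$ and $j \colon U \to X$ denote the corresponding immersions, and write $\alpha \colon M_1 \to N_1$ and $\beta \colon M_2 \to N_2$.

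First, I would invoke the localization cofiber sequence $j_! j^* N_1 \to N_1 \xrightarrow{\eta} i_* i^* N_1$ in $\SH^G(X)$. Naturality of the unit $\eta$ of the adjunction $i^* \dashv i_*$ gives $\eta \circ \alpha = i_* i^*(\alpha) \circ \eta_{M_1}$, which vanishes because $i^*(\alpha) = 0$ by hypothesis. Hence $\alpha$ lifts to a morphism $\tilde\alpha \colon M_1 \to j_! j^* N_1$.

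Next, I would smash $\tilde\alpha$ with $\id_{M_2}$ and appeal to the projection formula $(j_! j^* N_1) \wedge M_2 \simeq j_!(j^* N_1 \wedge j^* M_2) \simeq j_! j^*(N_1 \wedge M_2)$. This yields a factorization
\[
\alpha \wedge \beta \colon M_1 \wedge M_2 \to j_! j^*(N_1 \wedge M_2) \xrightarrow{\theta} N_1 \wedge N_2,
\]
where $\theta$ is the composite of the counit $j_! j^*(N_1 \wedge M_2) \to N_1 \wedge M_2$ with $\id_{N_1} \wedge \beta$. Applying $j^*$ and using that $j^* j_! \simeq \id$ (via the triangle identity) together with the symmetric monoidality of $j^*$ gives $j^*(\theta) = \id_{j^* N_1} \wedge j^*(\beta) = 0$. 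By the $j_! \dashv j^*$ adjunction this forces $\theta = 0$ in $\SH^G(X)$, and hence $\alpha \wedge \beta = 0$.

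The only delicate ingredients are the localization triangle $j_! j^* \to \id \to i_* i^*$ and the projection formula for the open immersion $j$; both are standard features of the six-functor formalism in Hoyois's equivariant motivic stable category, so I do not anticipate a serious obstacle. One symmetric alternative would be to first factor $\beta$ through $i_* i^* M_2$ and then smash with $\alpha$; the computation runs in parallel, which is a useful sanity check.
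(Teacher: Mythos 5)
Your proof is correct and follows essentially the same strategy as the paper's: both hinge on the localization triangle $j_!j^* \to \id \to i_*i^*$, the symmetric monoidality of $i^*$ and $j^*$, and a factorization of $\alpha$ (or $\alpha\wedge\id$) through $j_!j^*$, after which the vanishing of $j^*\beta$ finishes the argument. The paper arranges this as a diagram chase in a grid of localization triangles applied to the smash products $A\wedge B$, $A'\wedge B$, $A'\wedge B'$, whereas you lift $\alpha$ first and then invoke the projection formula to move the second smash factor inside $j_!j^*$; the underlying mechanism is the same.
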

\begin{proof}
Let $i\colon Y \to X$ and $j\colon U \to X$ be the immersions. Write $\alpha \colon A \to A'$ and $\beta \colon B \to B'$. By \cite[Theorem~6.18 (4) (6) (7)]{Hoyois-equivariant} we have a commutative diagram in $\SH^G(X)$, where rows are distinguished triangles
\[ \xymatrix{
j_!j^*(A \wedge B)\ar[r] \ar[d]^{j_!j^*(\alpha \wedge \id_B)} & A \wedge B \ar[r] \ar[d]^{\alpha \wedge \id_B} & i_*i^*(A \wedge B) \ar[d]^{i_*i^*(\alpha \wedge \id_B)} \ar[r] & \Sigma^{1,0}j_!j^*(A \wedge B)\ar[d]^{\Sigma^{1,0}j_!j^*(\alpha \wedge \id_B)} \\ 
j_!j^*(A' \wedge B)\ar[r] \ar[d]^{j_!j^*(\id_{A'} \wedge \beta)} & A' \wedge B \ar[r] \ar[d]^{\id_{A'} \wedge \beta} & i_*i^*(A' \wedge B) \ar[d]^{i_*i^*(\id_{A'} \wedge \beta)} \ar[r] & \Sigma^{1,0}j_!j^*(A' \wedge B)\ar[d]^{\Sigma^{1,0}j_!j^*(\id_{A'} \wedge \beta)} \\
j_!j^*(A' \wedge B')\ar[r] & A' \wedge B' \ar[r] & i_*i^*(A' \wedge B') \ar[r] & \Sigma^{1,0}j_!j^*(A' \wedge B) \\
}\]
Since $i^*\alpha=0$ and $j^*\beta=0$, it follows from the monoidality of $i^*$ and $j^*$ that $i_*i^*(\alpha \wedge \id_B)=0$ and $j_!j^*(\id_{A'} \wedge \beta)=0$. A diagram chase then shows the vanishing of the composite $\alpha \wedge \beta = (\id_{A'} \wedge \beta) \circ (\alpha \wedge \id_B) \colon A \wedge B \to A'\wedge B'$.
\end{proof}

\begin{lemma}
\label{lemm:vanish_product_cover}
Let $X \in \Sch^G_S$. Let $U_1,\dots,U_n$ be a cover of $X$ by $G$-invariant open subschemes, and let $\alpha_1,\dots,\alpha_n$ be morphisms in $\SH^G(X)$. Assume that $\alpha_i$ restricts to zero in $\SH^G(U_i)$ for each $i \in \{1,\dots,n\}$. Then $\alpha_1 \wedge \cdots \wedge \alpha_n =0$ in $\SH^G(X)$.
\end{lemma}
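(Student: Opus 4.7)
The plan is to induct on $n$, using Lemma \ref{lemm:Euler_closed_open} as the main ingredient that lets us split off one factor at a time. The base case $n=1$ is trivial, since $U_1 = X$ forces $\alpha_1 = 0$.

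For the inductive step, I would form the reduced closed complement $Y = X \smallsetminus U_n$ of $U_n$. Since $U_n$ is $G$-invariant and $G$ is flat, $Y$ inherits a $G$-action and becomes a $G$-invariant closed subscheme of $X$ (with closed immersion $i \colon Y \to X$). The opens $Y \cap U_1, \dots, Y \cap U_{n-1}$ then form a $G$-invariant open cover of $Y$ by $G$-quasi-projective $S$-schemes, and each pullback $i^*\alpha_j$ (for $j < n$) further restricts to zero on $Y \cap U_j$, because pullback along $Y \cap U_j \to U_j$ kills $\alpha_j|_{U_j} = 0$. By the inductive hypothesis applied in $\SH^G(Y)$, we obtain
\[
i^*\alpha_1 \wedge \cdots \wedge i^*\alpha_{n-1} = 0 \quad \text{in } \SH^G(Y).
\]
Since $i^*$ is symmetric monoidal, this is the same as $i^*(\alpha_1 \wedge \cdots \wedge \alpha_{n-1}) = 0$, i.e.\ the morphism $\alpha_1 \wedge \cdots \wedge \alpha_{n-1}$ restricts to zero on $Y$.

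Now $\alpha_n$ restricts to zero on $U_n$ by hypothesis, while $\alpha_1 \wedge \cdots \wedge \alpha_{n-1}$ restricts to zero on the closed complement $Y$. Applying Lemma \ref{lemm:Euler_closed_open} with $\alpha = \alpha_1 \wedge \cdots \wedge \alpha_{n-1}$ and $\beta = \alpha_n$ yields $\alpha_1 \wedge \cdots \wedge \alpha_n = 0$, as required.

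The only mildly delicate point is setting up the reduction to the inductive hypothesis cleanly: one must be sure that $Y$ is a $G$-equivariant closed subscheme lying in $\Sch^G_S$ (so that $\SH^G(Y)$ is defined) and that the pullback functor $i^*$ is symmetric monoidal, so that the induction hypothesis really applies to the smash product $\alpha_1 \wedge \cdots \wedge \alpha_{n-1}$ and not just to each factor. Both facts are part of the six-functor formalism of \cite{Hoyois-equivariant} invoked in the previous lemma, so there is no substantive obstacle beyond bookkeeping.
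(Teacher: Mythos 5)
Your proof is correct and follows essentially the same strategy as the paper's: induct on $n$, pass to a $G$-invariant closed complement $Y$ of $U_n$, apply the inductive hypothesis to the cover $\{Y \cap U_j\}_{j<n}$ of $Y$ (using that $i^*$ is monoidal), and finish with Lemma~\ref{lemm:Euler_closed_open}. The only cosmetic differences are that the paper takes $n=0$ as the base case and cites \cite[Lemma~2.1]{Hoyois-equivariant} for the existence of a $G$-invariant closed complement in $\Sch^G_S$, the point you correctly flag as needing care.
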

\begin{proof}
We proceed by induction on $n$, the case $n=0$ being clear, as then $X=\varnothing$. Assume that $n \geq 1$. Let $Y$ be a $G$-invariant closed complement of $U_n$ in $X$ (such exists by \cite[Lemma~2.1]{Hoyois-equivariant}). Then $Y$ is covered by the $G$-invariant open subschemes $Y \cap U_1,\dots,Y \cap U_{n-1}$, and $\alpha_j$ restricts to zero in $\SH^G(Y\cap U_j)$ for $j=1,\dots,n-1$ (as $Y \cap U_j \subset U_j$). Thus by induction $\alpha_1 \wedge \dots \wedge \alpha_{n-1}$ restricts to zero in $\SH^G(Y)$. Since $\alpha_n$ restricts to zero in $\SH^G(U_n)$, we conclude by \rref{lemm:Euler_closed_open} that $\alpha_1 \wedge \dots \wedge \alpha_n$ restricts to zero in $\SH^G(X)$.
\end{proof}

\subsection{Motivic concentration}
We now fix a closed normal subgroup scheme $C \subset G$ over $S$. We assume that $C$ is linearly reductive, and that the $\Oc_S$-module $\Oc_S[C]$ is locally projective \rref{p:def_loc_proj}.

\begin{para}
Let $X \in \Sch^G_S$. Recall from \cite[Definition~2.18]{Hoyois-equivariant} that a $G$-equivariant morphism $Y \to X$ is called a \emph{$G$-affine bundle} if it is a torsor under a $G$-equivariant a vector bundle $E \to X$, in such a way that the action $E \times_XY \to Y$ is $G$-equivariant. From the description given just below \cite[Definition~2.18]{Hoyois-equivariant}, it follows that $Y$ is a $G$-invariant closed subscheme of a $G$-equivariant vector bundle over $X$, and in particular $Y \in \Sch^G_S$ (see \cite[Lemma~2.13]{Hoyois-equivariant}).
\end{para}

\begin{definition}
\label{def:V_C}
We set
\[
\mathcal{V}_C=\{\text{$G$-equivariant vector bundles $V\to S$ such that $V^C=0$}\}.
\]
\end{definition}

\begin{para}
\label{p:VC_stable_sum}
The set $\mathcal{V}_C$ is stable under direct sums.
\end{para}

\begin{proposition}
\label{cor:no_fixed_SH_vanishes}
Let $X \in \Sch^G_S$ be such that $X^C=\varnothing$. Then there exists $V \in \mathcal{V}_C$ such that $e_V =0$ in $\SH^G(X)$.
\end{proposition}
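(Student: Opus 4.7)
The plan is to reduce to the geometric concentration theorem \rref{th:conc_main} by covering $X$ with $G$-invariant affine opens, and then to glue using the multiplicativity of Euler classes together with \rref{lemm:vanish_product_cover}.

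First I would handle the base. Since $S$ is noetherian, fix a finite affine open cover $S = S_1 \cup \cdots \cup S_m$, and let $X_j = \pi_X^{-1}(S_j)$, which is a $G$-invariant open subscheme of $X$ and is $G$-quasi-projective over the affine noetherian scheme $S_j$. Using a $G$-equivariant ample line bundle on $X_j$ together with the linear reductivity and resolution property of $G$ (which permit averaging sections to $G$-invariant ones), each $X_j$ admits a finite cover by $G$-invariant affine open subschemes. Combining over $j$ yields a finite $G$-invariant affine open cover $U_1, \dots, U_N$ of $X$ such that each $U_k$ is an affine noetherian scheme mapping to some $S_{j(k)}$; the open immersion $S_{j(k)} \hookrightarrow S$ is quasi-affine, so the hypotheses of \rref{th:conc_main} apply to $U_k$ viewed as a $G$-equivariant $S_{j(k)}$-scheme.

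Next, for each $k$, note that $U_k^C \subset X^C = \varnothing$, so \rref{th:conc_main} produces a $G$-equivariant vector bundle $V_k \to S$ with $V_k^C = 0$ (hence $V_k \in \mathcal{V}_C$) together with a $G$-equivariant section of $\pi_{U_k}^* V_k$ whose vanishing locus is empty, that is, a nowhere vanishing $G$-equivariant section. By \rref{lemm:e_section} it follows that $e_{\pi_{U_k}^* V_k} = 0$ in $\SH^G(U_k)$; in the shorthand of \rref{def:Euler}, $e_{V_k}$ restricts to zero on $U_k$.

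Finally, I would set $V = V_1 \oplus \cdots \oplus V_N$, which lies in $\mathcal{V}_C$ by \rref{p:VC_stable_sum}. Under the canonical identification $\Sph^V \simeq \Sph^{V_1} \wedge \cdots \wedge \Sph^{V_N}$ recalled in \rref{p:Euler_sum}, the Euler class $e_V$ corresponds to the smash product $e_{V_1} \wedge \cdots \wedge e_{V_N}$ in $\SH^G(X)$. Since each factor $e_{V_k}$ restricts to zero in $\SH^G(U_k)$ and the $U_k$ cover $X$, \rref{lemm:vanish_product_cover} gives $e_{V_1} \wedge \cdots \wedge e_{V_N} = 0$, whence $e_V = 0$ as required. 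The step I expect to demand the most care is the initial reduction to a finite $G$-invariant affine open cover of $X$ under the present hypotheses on $G$; everything downstream of this is essentially formal, given \rref{th:conc_main}, \rref{lemm:e_section}, \rref{p:Euler_sum} and \rref{lemm:vanish_product_cover}.
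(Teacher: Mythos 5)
Your overall strategy — reduce to the affine base via an open cover of $S$, apply the geometric concentration theorem \rref{th:conc_main}, then glue using \rref{p:VC_stable_sum}, \rref{p:Euler_sum}, \rref{lemm:e_section} and \rref{lemm:vanish_product_cover} — is exactly the structure of the paper's proof, and the final gluing step is correct. The gap is in the reduction from $X$ to affine schemes over an affine base.

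You assert that each $X_j = \pi_X^{-1}(S_j)$, being $G$-quasi-projective over the affine noetherian $S_j$, admits a finite cover by $G$-invariant \emph{affine} open subschemes, and you justify this by ``a $G$-equivariant ample line bundle together with linear reductivity and the resolution property (averaging sections to $G$-invariant ones).'' This does not work, and the underlying claim is not true at the stated level of generality. Averaging a section $s$ of a power $L^n$ of a linearized ample line bundle only produces an element of $H^0(X_j, L^n)^G$, whose nonvanishing locus can be strictly smaller than $X_{j,s}$; for example, the nonvanishing loci of all $G$-invariant sections of all $L^n$ can miss a $G$-fixed point entirely (a standard phenomenon in GIT, where these loci sweep out only the semistable locus). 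More fundamentally, the existence of a $G$-invariant affine cover of a $G$-quasi-projective scheme is a serious theorem (Sumihiro-type) that requires additional hypotheses such as normality of $X$ and is not available for arbitrary $X \in \Sch^G_S$. This is precisely the obstruction that equivariant Jouanolou's trick (\cite[Proposition~2.20]{Hoyois-equivariant}) is designed to circumvent.

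The paper proceeds differently and avoids the issue: after covering $S$ by affine opens $U_k$, one does \emph{not} try to cover $X \times_S U_k$ by invariant affines, but instead invokes Jouanolou's trick to produce a $G$-affine bundle $Y_k \to X \times_S U_k$ with $Y_k$ affine (over the affine $U_k$). One applies \rref{th:conc_main} to $Y_k$ (which has $Y_k^C = \varnothing$), obtains $e_{V_k} = 0$ in $\SH^G(Y_k)$ by \rref{lemm:e_section}, and then transfers this to $\SH^G(X \times_S U_k)$ by homotopy invariance \cite[Theorem~6.18~(8)]{Hoyois-equivariant} — a step your proposal omits and which is essential. After that the $X \times_S U_k$ form a $G$-invariant open cover of $X$ and \rref{lemm:vanish_product_cover} concludes as you describe. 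If you replace your ``$G$-invariant affine cover'' step with Jouanolou plus homotopy invariance, your argument becomes correct and coincides with the paper's.
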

\begin{proof}
Let $U_1,\dots,U_n$ be a finite cover of $S$ by affine open subschemes. Let $k \in \{1,\dots,n\}$. By Jouanolou's trick \cite[Proposition~2.20]{Hoyois-equivariant} there exists a $G$-affine bundle $Y_k \to X \times_S U_k$ such that $Y_k$ is affine. Note that $Y_k^C=\varnothing$. By \rref{th:conc_main} we find an element $V_k \in \Vc_C$ whose pullback to $Y_k$ admits a nowhere vanishing $G$-equivariant section. By \rref{lemm:e_section} this implies that $e_{V_k} =0$ in $\SH^G(Y_k)$, hence by homotopy invariance \cite[Theorem~6.18~(8)]{Hoyois-equivariant} we deduce that $e_{V_k} =0$ in $\SH^G(X \times_S U_k)$. We conclude using \rref{lemm:vanish_product_cover} that $e_{V_1} \wedge \dots \wedge e_{V_n}$ vanishes in $\SH^G(X)$. Since $e_{V_1} \wedge \dots \wedge e_{V_n}$ may be identified with $e_{V_1 \oplus \dots\oplus V_n}$, we conclude by setting $V = V_1 \oplus \dots\oplus V_n$.
\end{proof}

\begin{definition}
\label{def:e_V_isom}
Let $V \to S$ be a $G$-equivariant vector bundle, and $X \in \Sch^G_S$. We will say that a morphism $f \colon A \to B$ in $\SH^G(X)$, resp.\ $\Hom(\Cc,\SH^G(X))$ for a category $\Cc$, is an \emph{$e_V$-isomorphism} if there exists a morphism $s \colon B \to \Sigma^VA$ fitting into a commutative diagram in $\SH^G(X)$, resp.\ $\Hom(\Cc,\SH^G(X))$,
\[ \xymatrix{
A\ar[rr]^-f \ar[d]_{e_{V,A}} && B \ar[d]^{e_{V,B}} \ar[dll]_s \\ 
\Sigma^VA \ar[rr]_-{\Sigma^V f} && \Sigma^VB 
}\]
\end{definition}

\begin{lemma}
\label{lemm:cone_e_V_isom}
Let $V \to S$ be a $G$-equivariant vector bundle, and $X \in \Sch^G_S$. Let $A \to B \to D \to \Sigma^{1,0}A$ be a distinguished triangle in $\SH^G(X)$.
\begin{enumerate}[(i)]
\item If $e_{V,D} \colon D \to \Sigma^V D$ is zero, then $A \to B$ is an $e_{V^{\oplus 2}}$-isomorphism.

\item If $A \to B$ is an $e_V$-isomorphism, then $e_{V^{\oplus 2},D} \colon D \to \Sigma^{V^{\oplus 2}} D$ is zero.
\end{enumerate}
\end{lemma}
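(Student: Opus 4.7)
The plan is to chase through the distinguished triangle $A \xrightarrow{f} B \xrightarrow{g} D \xrightarrow{h} \Sigma^{1,0}A$ using the naturality of the Euler class, namely $\Sigma^V(\phi) \circ e_{V,Y} = e_{V,Z} \circ \phi$ for any $\phi \colon Y \to Z$ in $\SH^G(X)$. By \rref{p:Euler_sum}, $e_{V^{\oplus 2},Y}$ can be written either as $\Sigma^V(e_{V,Y}) \circ e_{V,Y}$ or as $e_{V, \Sigma^V Y} \circ e_{V,Y}$, and I will freely switch between these presentations.

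For part (i), naturality applied to $g$ together with $e_{V,D}=0$ gives $\Sigma^V(g) \circ e_{V,B}=0$. From the distinguished triangle $\Sigma^V A \xrightarrow{\Sigma^V f} \Sigma^V B \xrightarrow{\Sigma^V g} \Sigma^V D$ we then obtain $t \colon B \to \Sigma^V A$ with $\Sigma^V(f) \circ t = e_{V,B}$. My candidate is $s := \Sigma^V(t) \circ e_{V,B} \colon B \to \Sigma^{V^{\oplus 2}} A$. The relation $\Sigma^{V^{\oplus 2}}(f) \circ s = e_{V^{\oplus 2},B}$ is immediate from $\Sigma^V(f) \circ t = e_{V,B}$. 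Naturality gives $s \circ f = \Sigma^V(t \circ f) \circ e_{V,A}$, so the remaining identity $s \circ f = e_{V^{\oplus 2},A}$ reduces to showing $\Sigma^V(t \circ f - e_{V,A}) \circ e_{V,A} = 0$. Since $\Sigma^V(f) \circ (t \circ f - e_{V,A}) = 0$, the difference lifts through the connecting morphism of the shifted triangle to some $u \colon A \to \Sigma^{-1,0}\Sigma^V D$; applying naturality of $e_V$ to $u$ turns the expression into a composite with factor $e_{V, \Sigma^{-1,0}\Sigma^V D}$, which vanishes because $e_{V,D}=0$ propagates to any $Y$ obtained from $D$ by smashing with an invertible object.

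Part (ii) is dual. Given $s$ with $s \circ f = e_{V,A}$ and $\Sigma^V(f) \circ s = e_{V,B}$, the identity $g \circ f = 0$ together with naturality yields $e_{V,D} \circ g = \Sigma^V(g) \circ \Sigma^V(f) \circ s = 0$, so $e_{V,D}$ factors through $h$ as $v \circ h$ for some $v \colon \Sigma^{1,0}A \to \Sigma^V D$. Then
\[
e_{V^{\oplus 2},D} = e_{V,\Sigma^V D} \circ v \circ h = \Sigma^V(v) \circ e_{V,\Sigma^{1,0}A} \circ h
\]
by naturality of $e_V$ applied to $v$. Since $e_{V,\Sigma^{1,0}A} = \Sigma^{1,0}(e_{V,A}) = \Sigma^{1,0}(s \circ f)$ and $\Sigma^{1,0}(f) \circ h = 0$ in the rotated triangle, this composite vanishes.

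The one delicate point is the propagation used in part (i): the deduction $e_{V,D}=0 \Rightarrow e_{V,\Sigma^{-1,0}\Sigma^V D}=0$. This follows from the symmetric monoidal structure of $\SH^G(X)$, since $e_{V,Y} = e_V \wedge \id_Y$ and the vanishing is inherited through the smash factor $\id_D$ upon tensoring with the invertible object $\Sigma^{-1,0}\Sph^V$. Once this is made explicit, both halves are routine diagram chases driven by naturality of $e_V$ and the exactness of the triangle.
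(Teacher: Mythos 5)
Your proof is correct: it is precisely the diagram chase the paper invokes (in the $3\times 4$ grid whose columns are the Euler-class maps and whose rows are the triangle and its two $V$-suspensions), with the lift $t$ of $e_{V,B}$ through $\Sigma^V f$ in part~(i) and the factorization of $e_{V,D}$ through $h$ in part~(ii) being the two uses of exactness, and the flagged propagation $e_{V,D}=0 \Rightarrow e_{V,\Sigma^{-1,0}\Sigma^V D}=0$ correctly handled via $e_{V,Y}=e_V\wedge\id_Y$ and the monoidal structure. No gap; same approach, merely made explicit.
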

\begin{proof}
In view of \rref{p:Euler_sum}, this follows from a diagram chase in the commutative diagram in $\SH^G(X)$, where rows are distinguished triangles,
\[
\begin{gathered}[b]
\xymatrix{
\Sigma^{-1,0}D\ar[r] \ar[d] & A \ar[r] \ar[d] & B \ar[r] \ar[d] & D \ar[d]\\ 
\Sigma^{-1,0}\Sigma^VD\ar[r] \ar[d] & \Sigma^VA \ar[r] \ar[d] & \Sigma^VB \ar[r]\ar[d] & \Sigma^VD \ar[d]\\
\Sigma^{-1,0}\Sigma^V\Sigma^VD\ar[r] & \Sigma^V\Sigma^VA \ar[r]   & \Sigma^V\Sigma^VB \ar[r] & \Sigma^V\Sigma^VD 
}\\[-\dp\strutbox]
\end{gathered}.
\qedhere
\]
\end{proof}

\begin{proposition}
\label{prop:euler_section}
Let $X \in \Sch^G_S$, and denote by $i \colon X^C \to X$ the closed immersion. Then there exists $V \in \mathcal{V}_C$ such that $\id_{\SH^G(X)} \to i_* i^*$ and $i_! i^! \to \id _{\SH^G(X)}$ are $e_V$-isomorphisms, as morphisms of functors $\SH^G(X) \to \SH^G(X)$.
\end{proposition}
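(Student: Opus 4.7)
The plan is to combine \rref{cor:no_fixed_SH_vanishes} applied to the open complement of $X^C$ with the standard localization triangles associated to the closed--open pair $(i,j)$, where $j \colon U \to X$ is the open immersion of $U = X \smallsetminus X^C$. By \rref{p:fixed_bc_mono} we have $U^C = X^C \times_X U = \varnothing$, so \rref{cor:no_fixed_SH_vanishes} produces some $V_0 \in \Vc_C$ with $e_{V_0} = 0$ in $\SH^G(U)$.

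The relevant input from \cite[Theorem~6.18]{Hoyois-equivariant} is the pair of distinguished triangles of endofunctors of $\SH^G(X)$
\[
j_! j^* \to \id \to i_* i^* \to \Sigma^{1,0} j_! j^* \quad \text{and} \quad i_! i^! \to \id \to j_* j^* \to \Sigma^{1,0} i_! i^!.
\]
I would next verify that the natural transformations $e_{V_0, j_! j^*(-)}$ and $e_{V_0, j_* j^*(-)}$ both vanish. For $j_!$, the projection formula $\Sph^{V_0} \wedge j_!(-) \simeq j_!(j^* \Sph^{V_0} \wedge -)$ identifies $e_{V_0, j_! j^* A}$ with $j_!(e_{j^* V_0, j^* A})$, which is zero because $j^* e_{V_0} = e_{j^* V_0} = 0$ in $\SH^G(U)$. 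Since $\Sph^{V_0}$ is $\otimes$-invertible, the analogous projection formula holds for the right adjoint $j_*$, and gives the vanishing of $e_{V_0, j_* j^* A}$ by the same argument.

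Setting $V = V_0^{\oplus 2}$, \rref{lemm:cone_e_V_isom}(i) applied to the first triangle (in which $\Sigma^{1,0} j_! j^* A$ is the cone of $A \to i_* i^* A$) shows that $\id \to i_* i^*$ is an $e_V$-isomorphism, while applied to the second triangle (in which $j_* j^* A$ is the cone of $i_! i^! A \to A$) it shows that $i_! i^! \to \id$ is an $e_V$-isomorphism.

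The only point requiring some care is that the splittings demanded by the definition of $e_V$-isomorphism must be morphisms in $\Hom(\SH^G(X), \SH^G(X))$, i.e.\ natural transformations of endofunctors, whereas the diagram chase of \rref{lemm:cone_e_V_isom} a priori only produces an arrow for each object. This is handled by performing the chase in the stable $\infty$-categorical enhancement of $\SH^G(X)$ underlying Hoyois' construction, where cones are functorial and the splitting can be chosen as an honest natural transformation; I expect this to be a purely formal verification and not a genuine obstacle.
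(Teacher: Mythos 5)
Your argument follows the same route as the paper's: use \rref{cor:no_fixed_SH_vanishes} on the open complement $U = X \smallsetminus X^C$ to produce $W \in \Vc_C$ with $e_W = 0$ in $\SH^G(U)$, observe that $j_!j^!e_W$ and $j_*j^*e_W$ vanish, and feed the two localization triangles into \rref{lemm:cone_e_V_isom} with $V = W^{\oplus 2}$. The one genuine difference is how the naturality of the resulting splitting is secured, and you correctly identify this as the delicate point.

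The paper handles it more economically than you propose. Rather than redoing the diagram chase functorially, it applies \rref{lemm:cone_e_V_isom} only to the two triangles evaluated at the single object $\Un_X$, namely $j_!j^!\Un_X \to \Un_X \to i_*i^*\Un_X$ and $i_!i^!\Un_X \to \Un_X \to j_*j^*\Un_X$; this yields fixed morphisms $s$ in $\SH^G(X)$ with no naturality question. It then upgrades these to natural transformations of endofunctors by smashing with $\id_{\SH^G(X)}$, invoking the projection formula together with $i_! \simeq i_*$ to identify $i_*i^*(-)$ and $i_!i^!(-)$ with smashing against $i_*i^*\Un_X$ and $i_!i^!\Un_X$ respectively; the natural splitting is then simply $\id \wedge s$. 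Your alternative, performing the chase in the stable $\infty$-categorical functor category (where the recollement triangles live as cofiber sequences of functors and the vanishing $j^*e_W = 0$ holds as a natural transformation), is also sound and in fact bypasses the need for a projection formula for $i_!i^!$ — but you leave that verification as an expectation rather than carrying it out. If you want to keep the proof at the triangulated level, the paper's unit-then-smash device is the cleanest way to close the gap you noticed.
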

\begin{proof}
Let $j \colon U \to X$ be the open complement of $i$. By \rref{cor:no_fixed_SH_vanishes}, there exists $W \in \mathcal{V}_C$ such that $e_W \colon \Un_U \to \Sigma^W\Un_U$ vanishes in $\SH^G(U)$. This morphism is the image of $e_W \colon \Un_X \to \Sigma^W\Un_X$ under $j^! \simeq j^* \colon \SH^G(X) \to \SH^G(U)$. Therefore $j_!j^!e_W$ and $j_*j^*e_W$ vanish in $\SH^G(X)$. By \rref{lemm:cone_e_V_isom}, the distinguished triangles in $\SH^G(X)$ (see \cite[Theorem~6.18~(4)]{Hoyois-equivariant})
\[
j_!j^!\Un_X \to \Un_X \to i_* i^* \Un_X \quad \text{and} \quad i_!i^!\Un_X \to \Un_X \to j_* j^* \Un_X
\]
thus imply that $\Un_X \to i_* i^* \Un_X$ and $i_!i^!\Un_X \to \Un_X$ are $e_V$-isomorphisms in $\SH^G(X)$, where $V=W^{\oplus 2} \in \Vc_C$. In view of the projection formula \cite[Theorem~6.18~(7)]{Hoyois-equivariant} (recall from \cite[Theorem~6.18 (1)]{Hoyois-equivariant} that $i_! \simeq i_*$), the proposition follows by smashing with $\id_{\SH^G(X)}$.
\end{proof}

\subsection{Inverting Euler classes}
We still assume $C \subset G$ is a closed normal subgroup scheme, which is linearly reductive, and that the $\Oc_S$-module $\Oc_S[C]$ is locally projective.

\begin{para}
We denote by $\Eu_C$ the collection of morphisms in $\SH^G(X)$ 
\[
A \xrightarrow{e_{V,A}} \Sigma^V A \xrightarrow{\sim} B,
\]
where $A,B \in \SH^G(X)$, and $V$ runs over $\Vc_C$. Using \rref{p:VC_stable_sum} and \rref{p:Euler_sum}, it is not difficult to show that $\Eu_C$ is a multiplicative system compatible with the triangulated structure in the sense of \cite[Tags~\href{https://stacks.math.columbia.edu/tag/04VC}{04VC}, \href{https://stacks.math.columbia.edu/tag/05R2}{05R2}]{stacks}. We will denote by $\SH^G(X)[\Eu_C^{-1}]$ the localization of the triangulated category $\SH^G(X)$ at the multiplicative system $\Eu_C$.
\end{para}

\begin{para}
\label{p:e_V_isom_in_loc}
If $f$ is an $e_V$-isomorphism in $\SH^G(X)$ for some $V \in \Vc_C$, then $f$ becomes an isomorphism in $\SH^G(X)[\Eu_C^{-1}]$.
\end{para}

\begin{para}
When $f \colon Y \to X$ is a morphism in $\Sch_S^G$, it follows from \cite[Theorem~6.18 (6) and (7)]{Hoyois-equivariant} that we have induced functors
\[
f^* \colon \SH^G(X)[\Eu_C^{-1}] \to \SH^G(Y)[\Eu_C^{-1}], \quad f_! \colon \SH^G(Y)[\Eu_C^{-1}] \to \SH^G(X)[\Eu_C^{-1}].
\]
\end{para}

\begin{theorem}
\label{th:localisation}
Let $X \in \Sch^G_S$ and denote by $i \colon X^C \to X$ the closed immersion. Then each of the adjunctions $i^* \dashv i_*$ and $i_! \dashv i^!$ induces an adjoint equivalence of categories
\[
\SH^G(X)[\Eu_C^{-1}] \simeq \SH^G(X^C)[\Eu_C^{-1}].
\]
\end{theorem}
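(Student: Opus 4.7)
The plan is to show that each of the two adjunctions descends to the localized categories and becomes an adjoint equivalence there, by verifying that both its unit and counit are isomorphisms after localization. Proposition~\ref{prop:euler_section} already supplies half of what is needed: the unit $\eta\colon \id_{\SH^G(X)} \to i_*i^*$ of $(i^*, i_*)$ and the counit $\epsilon'\colon i_!i^! \to \id_{\SH^G(X)}$ of $(i_!, i^!)$ are $e_V$-isomorphisms for some $V \in \Vc_C$, hence become isomorphisms in $\SH^G(X)[\Eu_C^{-1}]$ by \rref{p:e_V_isom_in_loc}. What remains is to handle the opposite counit and unit, living on $\SH^G(X^C)$, and to verify that all four functors descend to the localizations.

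For the counit $\epsilon\colon i^*i_* \to \id_{\SH^G(X^C)}$ and the unit $\eta'\colon \id_{\SH^G(X^C)} \to i^!i_!$, I would observe that these are already isomorphisms in $\SH^G(X^C)$, with no localization needed. This is the standard fullness and faithfulness of $i_*\simeq i_!$ for a closed immersion in the six-functor formalism of \cite[Theorem~6.18]{Hoyois-equivariant}.

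For descent of the four functors, the paragraph immediately preceding the theorem takes care of $i^*$ and $i_!\simeq i_*$. For $i^!$, I would use the projection formula (a consequence of the $i_!\dashv i^!$ adjunction combined with \cite[Theorem~6.18~(7)]{Hoyois-equivariant}, applied to the invertible object $\Sph^V$) to produce a natural isomorphism $i^!(B\wedge \Sph^V)\simeq i^!B \wedge \Sph^{i^*V}$ under which $i^!(e_{V,B})$ is identified with $e_{i^*V,\, i^!B}$. Since $(i^*V)^C = i^*(V^C) = 0$ by \rref{lemm:pullback_fixed}, we have $i^*V\in \Vc_C$, so $i^!$ carries $\Eu_C$ into $\Eu_C$ and therefore descends. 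Once all four descended functors are in place and the units and counits of both adjunctions have been shown to be isomorphisms in the localized categories, the triangle identities transport formally from the original adjunctions, yielding the two claimed adjoint equivalences.

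The step requiring the most care is the descent of $i^!$: one must verify that the projection-formula isomorphism genuinely identifies $i^!(e_{V,B})$ with the Euler-class morphism $e_{i^*V,\,i^!B}$, since this is the only ingredient not already spelled out in the preceding discussion. All other steps are either a direct appeal to Proposition~\ref{prop:euler_section}, to the standard properties of closed immersions in the six-functor formalism, or to formal manipulations with localizations of adjunctions.
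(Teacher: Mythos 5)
Your proof is correct and follows essentially the same route as the paper's. You use \rref{prop:euler_section} with \rref{p:e_V_isom_in_loc} for the unit $\id \to i_*i^*$ and counit $i_!i^! \to \id$ on the $\SH^G(X)$ side, and you invoke fully-faithfulness of $i_*\simeq i_!$ for the counit $i^*i_*\to\id$ and unit $\id\to i^!i_!$ on the $\SH^G(X^C)$ side; the paper does the same thing, phrasing the latter step in terms of the exchange equivalences $Ex^*_*$ and $Ex^!_*$ applied to the cartesian square with both legs $\id_{X^C}$, which is the same content.

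One useful addition in your write-up is the explicit attention to the descent of $i^!$ via the exceptional projection formula $i^!(B\wedge \Sph^V)\simeq i^!B\wedge\Sph^{i^*V}$; the paper's preceding paragraph only records descent of $f^*$ and $f_!$, so this step is left implicit there. A small inaccuracy: you appeal to \rref{lemm:pullback_fixed} to get $(i^*V)^C=0$, but this is unnecessary, because $\Vc_C$ consists of $G$-equivariant vector bundles \emph{over $S$} with $V^C=0$, and the Euler morphisms in $\Eu_C$ always use the same $V\to S$ (implicitly pulled back to whichever base); so $i^!$ preserving $\Eu_C$ only needs the compatibility $i^!(e_{V,B})\simeq e_{V,\,i^!B}$, with no additional condition on $V$ to verify. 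This does not affect the correctness of your argument.
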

\begin{proof}
Consider the cartesian square in $\Sch^G_S$
\[ 
\xymatrix{
X^C\ar[r]^{\id} \ar[d]_{\id} & X^C \ar[d]^i \\ 
X^C \ar[r]^i & X
}
\]
Then the exchange equivalence $Ex^*_*$, resp.\ $Ex^!_*$, (see \cite[Theorem~6.10, resp.\ Theorem~6.18~(3)]{Hoyois-equivariant}) shows that the counit $i^* \circ i_* \to \id$, resp.\ the unit $\id \to i^! \circ i_*=i^! \circ i_! $ is an isomorphism of functors $\SH^G(X^C) \to \SH^G(X^C)$. 

The fact that the unit $\id \to i_* \circ i^*$, resp.\ the counit $i_! \circ i^! \to \id$, is an isomorphism of functors $\SH^G(X)[\Eu_C^{-1}] \to \SH^G(X)[\Eu_C^{-1}]$ follows from \rref{prop:euler_section} and \rref{p:e_V_isom_in_loc}.
\end{proof}

\begin{para}
When $f \colon Y \to X$ is a smooth morphism in $\Sch_S^G$, it follows from the smooth projection formula \cite[Proposition~4.3]{Hoyois-equivariant} that we have an induced functor
\[
f_\sharp \colon \SH^G(Y)[\Eu_C^{-1}] \to \SH^G(X)[\Eu_C^{-1}].
\]
\end{para}

\begin{para}
\label{p:Gysin}
Let $i \colon Y \to X$ be a closed immersion in $\Sm^G_S$ with normal bundle $N \to Y$, and set $U=X\smallsetminus Y$. We consider the composite in $\SH^G(S)$
\[
\overline{i} \colon \Sup X \to \Su(X/U) \simeq \Su \Th N,
\]
where the last isomorphism is the purity isomorphism \cite[\S3.5]{Hoyois-equivariant}.
\end{para}

\begin{para}
\label{p:push_pull}
Let $i \colon Y \to X$ be a closed immersion in $\Sm^G_S$. It follows from the construction of the purity isomorphism \cite[\S3.5]{Hoyois-equivariant} that the composite $\overline{i} \circ \Sup i$ in $\SH^G(S)$ is the image of the morphism $e_N \colon \Un_Y \to \Sph^N$ under the map $(\pi_Y)_\sharp \colon \SH^G(Y) \to \SH^G(S)$.
\end{para}

\begin{corollary}
\label{cor:push_pull}
Let $X \in \Sm^G_S$. Denote by $i \colon X^C \to X$ the closed immersion, and by $N$ its normal bundle. Then we have a commutative diagram of isomorphisms in $\SH^G(S)[\Eu_C^{-1}]$
\[ \xymatrix{
\Sup X\ar[rr]^{\overline{i}}_{\sim} && \Su \Th N \\ 
& \Sup (X^C) \ar[lu]^{\Sup i}_{\sim} \ar[ru]_*+<0.5em>{_{(\pi_{X^C})_\sharp(e_N)}}^-{\sim} & 
}\]
\end{corollary}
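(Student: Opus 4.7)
The plan is to check commutativity of the triangle first (which is essentially automatic), then show that two of its three edges become isomorphisms after inverting $\Eu_C$, and conclude for the third edge via two-out-of-three. Commutativity is immediate from \rref{p:push_pull}, which gives the equality $\overline{i} \circ \Sup i = (\pi_{X^C})_\sharp(e_N)$ already in $\SH^G(S)$, and this persists after passing to $\SH^G(S)[\Eu_C^{-1}]$.

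To see that $\overline{i}$ becomes an isomorphism in $\SH^G(S)[\Eu_C^{-1}]$, I would observe that by construction it fits into the Morel--Voevodsky cofiber sequence $\Sup U \to \Sup X \xrightarrow{\overline{i}} \Su\Th N$, the target being identified with $\Su(X/U)$ via the purity isomorphism, where $U = X \smallsetminus X^C$ is the $G$-invariant open complement, with $U^C = \varnothing$. Applying \rref{cor:no_fixed_SH_vanishes} to $U \in \Sch^G_S$ yields some $V \in \Vc_C$ with $e_V = 0$ in $\SH^G(U)$; since $e_V$ is inverted in $\SH^G(U)[\Eu_C^{-1}]$ by definition of the localization, this forces $\Un_U = 0$ there. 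Pushing forward along the smooth structure map $\pi_U$ (which is compatible with the $\Eu_C$-localization, as discussed below), we deduce $\Sup U = 0$ in $\SH^G(S)[\Eu_C^{-1}]$, and hence $\overline{i}$ is an isomorphism in that category.

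A parallel argument handles $(\pi_{X^C})_\sharp(e_N)$. Since $X^C \in \Sm^G_S$ by \rref{p:smooth_fixed_locus}, the smooth pushforward $(\pi_{X^C})_\sharp$ is defined and preserves $\Eu_C$-isomorphisms, so it suffices to show that $e_N \colon \Un_{X^C} \to \Sph^N$ is an isomorphism in $\SH^G(X^C)[\Eu_C^{-1}]$. The distinguished triangle $\Sup N^\circ \to \Un_{X^C} \xrightarrow{e_N} \Sph^N$ of \rref{p:triangle_euler} reduces this to the vanishing of $\Sup N^\circ$ in $\SH^G(X^C)[\Eu_C^{-1}]$. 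Now \rref{p:smooth_fixed_locus} gives $N^C = 0$, so $(N^\circ)^C = \varnothing$, and the recipe of the previous paragraph (applied this time to $N^\circ \in \Sch^G_S$ and its smooth projection to $X^C$) produces exactly this vanishing. Combining with commutativity and two-out-of-three applied to the commutative triangle in $\SH^G(S)[\Eu_C^{-1}]$, $\Sup i$ is automatically an isomorphism as well.

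The only delicate point running through the argument is verifying that $f_\sharp$ for a smooth morphism $f$ descends to the $\Eu_C$-localized categories and carries $e_V$-isomorphisms to $e_V$-isomorphisms; this is a formal consequence of the smooth projection formula isomorphism $f_\sharp \circ \Sigma^V \simeq \Sigma^V \circ f_\sharp$, which turns the defining square of an $e_V$-isomorphism into another such square. Everything else is a routine assembly of ingredients already established in the excerpt (the cofiber sequences of \rref{p:triangle_euler} and purity, \rref{cor:no_fixed_SH_vanishes}, and \rref{p:push_pull}), so I do not anticipate any serious obstacle beyond this bookkeeping.
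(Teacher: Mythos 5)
Your proof is correct and follows essentially the same strategy as the paper: commutativity is read off from \rref{p:push_pull}, the vanishings of $\Sup U$ and $\Sup N^\circ$ in the localized category come from \rref{cor:no_fixed_SH_vanishes} together with \rref{p:smooth_fixed_locus}, and the third arrow is an isomorphism by two-out-of-three. The only presentational difference is that you establish $e_N$ as an isomorphism in $\SH^G(X^C)[\Eu_C^{-1}]$ (using the triangle of \rref{p:triangle_euler}) and then apply $(\pi_{X^C})_\sharp$, whereas the paper factors $(\pi_{X^C})_\sharp(e_N)$ over $S$ as $\Sup(X^C)\to\Sup N\to\Su\Th N$ and treats the two pieces separately via homotopy invariance and the vanishing of $\Sup N^\circ$; these are the same argument up to bookkeeping, and your explicit remark that $f_\sharp$ descends to the $\Eu_C$-localizations (via the smooth projection formula) is the same observation the paper records just before \rref{p:Gysin}.
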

\begin{proof}
The commutativity of the triangle was observed in \rref{p:push_pull}. Let $U=X \smallsetminus X^C$. Since $\Sup U=0 \in \SH^G(S)[\Eu_C^{-1}]$ by \rref{cor:no_fixed_SH_vanishes}, it follows that the morphism $\Sup X \to \Su(X/U)$ is an isomorphism in $\SH^G(S)[\Eu_C^{-1}]$, hence so is $\overline{i}$. Since $N \smallsetminus N^C=N^{\circ}$ by \rref{p:smooth_fixed_locus}, we have $\Sup (N^{\circ})=0 \in \SH^G(S)[\Eu_C^{-1}]$ by \rref{cor:no_fixed_SH_vanishes}, hence the morphism $\Sup N \to \Su(N/N^\circ)=\Th N$ is an isomorphism in $\SH^G(S)[\Eu_C^{-1}]$. So is the morphism $\Sup (X^C) \to \Sup N$ induced by the zero-section (by homotopy invariance \cite[Theorem~6.18~(8)]{Hoyois-equivariant}), hence so is their composite $\Sup (X^C) \to \Sup N \to \Th N$, which by \rref{def:Euler} coincides with $(\pi_{X^C})_\sharp(e_N)$.
\end{proof}

\begin{remark}
\label{rem:largest_quotient}
Observe that $\SH^G(S)[\Eu_C^{-1}]$ is the largest quotient of $\SH^G(S)$ where $\Sup (X^C) \to \Sup X$ induces an isomorphism for all $X \in \Sm^G_S$. Indeed assume that $F \colon \SH^G(S) \to \mathcal{C}$ is a tensor triangulated functor such that $F(\Sup (X^C)) \to F(\Sup X)$ is an isomorphism for any $X \in \Sm^G_S$. Let $V \in \mathcal{V}_C$. Taking $X=V^{\circ}$, we have $X^C =\varnothing$, and therefore $F(\Sup V^\circ)=0$ in $\mathcal{C}$. In view of the distinguished triangle of \rref{p:triangle_euler}, we deduce that $F(e_V)$ must be an isomorphism, which by monoidality implies that $F(e_{V,A})$ is an isomorphism in $\mathcal{C}$ for every $A \in \SH^G(S)$.
\end{remark}

\section{Cohomology theories}
\label{sect:coh}
We let $G$ be a group scheme over $S$.

\subsection{Equivariant theories}
\label{sect:equiv_theories}
In this section, we fix a spectrum $A \in \SH^G(S)$.

\begin{para}
When $V$ is a $G$-equivariant vector bundle over $X \in \Sch^G_S$, the object $\Sph^V=\Sigma^V\Un_X \in \SH^G(X)$ is invertible, see \cite[Proposition~6.5]{Hoyois-equivariant}. Moreover an exact sequence of $G$-equivariant vector bundles $0 \to V_1 \to V_2 \to V_3 \to 0$ over $X$ induces a canonical isomorphism $\Sph^{V_2} \simeq \Sph^{V_1} \wedge \Sph^{V_3}$, see \cite[p.348 and \S3.5]{Hoyois-equivariant}. As in \cite[\S4.1]{Riou}  this allows us to define an invertible object $\Sph^{\xi} \in \SH^G(X)$ for every virtual $G$-equivariant bundle $\xi$ over $X$, in the sense of Deligne \cite[\S4]{Deligne-det}. (See also \cite[\S16.2]{Bachmann-Hoyois-Norms}, and \cite[Théorème~15.18]{Ayoub-these-I} for details.)
\end{para}

\begin{para}
\label{p:def_coh}
Let $\xi$ be a virtual $G$-equivariant vector bundle over $X \in \Sch^G_S$. We set, for $p,q \in \Zz$,
\[
A^{p,q}(X;\xi) = [\Sigma^{-p,-q}\Sph^{-\xi}, \pi_X^*A]_{\SH^G(X)}, \quad A^{*,*}(X;\xi) = \bigoplus_{p,q\in\Zz} A^{p,q}(X;\xi),
\]
\[
A_{p,q}(X;\xi) = [\Sigma^{p,q}\Sph^{\xi}, \pi_X^!A]_{\SH^G(X)}, \quad A_{*,*}(X;\xi) = \bigoplus_{p,q\in\Zz} A_{p,q}(X;\xi).
\]
We will write $A^{p,q}(X)$, $A^{*,*}(X)$, $A_{p,q}(X)$, $A_{*,*}(X)$ instead of  $A^{p,q}(X;0)$, $A^{*,*}(X;0)$, $A_{p,q}(X;0)$, $A_{*,*}(X;0)$.
\end{para}

\begin{remark}
\label{rem:conv_twist}
When $G$ is trivial, the convention adopted in \rref{p:def_coh} agrees with that of \cite[Definition~2.2.1]{DFK}, \cite[\S3.1]{Levine-Atiyah-Bott} and \cite[2.1.8]{EHKSY}, but differs from that of \cite[\S5]{Ana-Pushforwards} (where $\xi$ is replaced with $-\xi$, and the grading is shifted by the rank of $\xi$).
\end{remark}

\begin{para}
If $\alpha \colon \xi \to \rho$ is an isomorphism of virtual $G$-equivariant vector bundles over $X \in \Sch^G_S$, then precomposing with the induced morphism $\Sph^\xi \to \Sph^\rho$ induces an isomorphism
\begin{equation}
\label{eq:pb_vb}
\alpha^* \colon A^{*,*}(X;-\rho) \to A^{*,*}(X;-\xi).
\end{equation}
\end{para}

\begin{para}
\label{p:action}
Assume that $A \in \SH^G(S)$ is a ring spectrum, by which we mean a monoid in $(\SH^G(S),\wedge,\Un_S)$, with product $\mu \colon A \wedge A \to A$. Then for any $X \in \Sch^G_S$, the group $A^{*,*}(X)$ is naturally a ring. More generally, when $\xi,\rho$ are virtual $G$-equivariant vector bundles over $X$, we have a product
\begin{equation}
\label{eq:prod}
A^{*,*}(X;\xi) \otimes A^{*,*}(X;\rho) \to A^{*,*}(X;\xi + \rho).
\end{equation}
Every element $a \in A^{p,q}(S)$ induces morphisms
\[
l_a\colon A \simeq \Un_S \wedge A \xrightarrow{a \wedge \id_A} (\Sigma^{p,q}A) \wedge A\simeq\Sigma^{p,q}(A \wedge A)  \xrightarrow{\Sigma^{p,q}\mu} \Sigma^{p,q}A,
\]
\[
r_a\colon A \simeq A \wedge \Un_S \xrightarrow{\id_A \wedge a} A \wedge (\Sigma^{p,q}A) \simeq\Sigma^{p,q}(A \wedge A)  \xrightarrow{\Sigma^{p,q}\mu} \Sigma^{p,q}A.
\]
Postcomposition with $\pi_X^*l_a$ and $\pi_X^*r_a$, resp. $\pi_X^!l_a$ and $\pi_X^!r_a$, induces a structure of $A^{*,*}(S)$-bimodule on $A^{*,*}(X;\xi)$, resp.\ $A_{*,*}(X;\xi)$, for every virtual $G$-equivariant vector bundle $\xi$ over $X$.
\end{para}

\begin{para}
\label{p:funct_coh}
Let $f \colon Y \to X$ be a morphism in $\Sch^G_S$, and $\xi$ a virtual $G$-equivariant vector bundle over $X$. Then the morphism $f^* \colon \SH^G(X) \to \SH^G(Y)$ induces a morphism
\begin{equation}
\label{eq:def_pb}
f^* \colon A^{*,*}(X;\xi) \to  A^{*,*}(Y;f^*\xi).
\end{equation}

Assume now that the morphism $f$ is proper and let $B =\Sigma^{p,q} \Sph^\xi \in \SH^G(X)$, with $p,q\in \Zz$. Then we have an isomorphism of functors $f_! \simeq f_*$ by \cite[Theorem~6.18~(2)]{Hoyois-equivariant}, and the composite
\[
[f^*B,\pi_Y^!A]_{\SH^G(Y)} \simeq [B,f_*\pi_Y^!A]_{\SH^G(X)} \simeq  [B,f_! f^!\pi_X^!A]_{\SH^G(X)} \to [B,\pi_X^!A]_{\SH^G(X)},
\]
where the last arrow is induced by the unit $\id \to f_!f^!$, induces a morphism
\begin{equation}
\label{eq:def_pf}
f_* \colon A_{*,*}(Y;f^*\xi) \to A_{*,*}(X;\xi).
\end{equation}
When $A$ is a ring spectrum, the maps \rref{eq:def_pb} and \rref{eq:def_pf} are morphisms of $A^{*,*}(S)$-bimodules.
\end{para}

\begin{para}
\label{p:relative_pb}
Consider a cartesian square in $\Sch^G_S$
\[ \xymatrix{
Y\ar[r]^g \ar[d]_{\pi_Y} & X \ar[d]^{\pi_X} \\ 
T \ar[r]^f & S
}\]
We view $X$, resp.\ $Y$, as an object in $\Sch^G_S$, resp.\ $\Sch^G_T$. Let $\xi$ be a virtual $G$-equivariant vector bundle over $X$, and $p,q\in \Zz$. Then the composite
\[
[\Sigma^{-p,-q}\Sph^{-\xi},\pi_X^*A]_{\SH^G(X)} \xrightarrow{g^*} [g^*\Sigma^{-p,-q}\Sph^{-\xi},g^*\pi_X^*A]_{\SH^G(Y)} \simeq [\Sigma^{-p,-q}\Sph^{-g^*\xi},\pi_Y^*f^*A]_{\SH^G(Y)}
\]
induces a morphism
\begin{equation}
\label{eq:relative_pb_coh}
g^* \colon A^{*,*}(X;\xi) \to (f^*A)^{*,*}(Y;g^*\xi).
\end{equation}
On the other hand the composite
\[
[\Sigma^{p,q} \Sph^\xi,\pi_X^!A]_{\SH^G(X)} \xrightarrow{g^*} [g^*\Sigma^{p,q} \Sph^\xi,g^*\pi_X^!A]_{\SH^G(Y)} \to [\Sigma^{p,q} \Sph^{g^*\xi},\pi_Y^!f^*A]_{\SH^G(Y)}
\]
where the unlabeled arrow is induced by the exchange morphism $Ex^{*!} \colon g^*\pi_X^! \to \pi_Y^!f^*$ (see \cite[p.272]{Hoyois-equivariant}), induces a morphism
\begin{equation}
\label{eq:relative_pb}
g^* \colon A_{*,*}(X;\xi) \to (f^*A)_{*,*}(Y;g^*\xi).
\end{equation}
\end{para}

\begin{para}
\label{p:pb_identify}
The morphism \eqref{eq:relative_pb_coh}, whose target is identified with $A^{*,*}(Y;g^*\xi)$ upon viewing $Y$ in $\Sch^G_S$, coincides with the pullback defined in \eqref{eq:def_pb}.
\end{para}

\begin{para}
\label{p:smooth_coh}
Let $V$ be a $G$-equivariant vector bundle over $X \in \Sm^G_S$. Then for each $p,q \in \Nn$ we have a natural identification
\[
A^{p,q}(X;-V) = [\Su \Th V, \Sigma^{p,q} A]_{\SH^G(S)}.
\]
Let us observe that, under this identification (for $V=0$), the pullback $f^* \colon A^{*,*}(X) \to A^{*,*}(Y)$ along a morphism $f \colon Y \to X$ in $\Sm^G_S$ is given by precomposition with the morphism $\Sup f \colon \Sup Y \to \Sup X$.

When $i\colon Y \to X$ is a closed immersion in $\Sm^G_S$ with normal bundle $N$, precomposing with the morphism $\overline{i} \colon \Sup X \to \Su \Th N$ in $\SH^G(S)$ (see \rref{p:Gysin}) induces a pushforward
\begin{equation}
\label{eq:pf}
i_* \colon A^{*,*}(Y;-N) \to A^{*,*}(X).
\end{equation}
When $A$ is a ring spectrum, this is a morphism of $A^{*,*}(S)$-bimodules.
\end{para}

\begin{remark}
When $\rho$ is a virtual $G$-equivariant vector bundle over $Z \in \Sm^G_S$, we have isomorphisms $A_{p,q}(Z;\rho) \simeq A^{-p,-q}(Z;\Tan_Z-\rho)$ by \cite[Theorem~6.18~(2)]{Hoyois-equivariant}, and the pushforward \rref{eq:pf} corresponds to the pushforward \rref{p:funct_coh} (for $f=i$ and $\xi=\Tan_X$).
\end{remark}

\begin{remark}
\label{rem:notation_G_trivial}
We will use the notation of this section also in the non-equivariant setting, that is when $G=1$, and thus $\SH^G(S)=\SH(S)$.
\end{remark}

\subsection{Pseudo-orientations}
\begin{definition}
\label{def:beta:1}
Let $A \in \SH^G(S)$ be a ring spectrum. A \emph{$G$-equivariant pseudo-orientation} of $A$ is the data of elements
\[
\beta_E \in A^{*,*}(X;-E)
\]
for each $G$-equivariant vector bundle $E$ over $X \in \Sm^G_S$, satisfying the following conditions:
\begin{enumerate}[(i)]
\item
\label{def:beta:1:central}
the class $\beta_E$ is $A^{*,*}(X)$-central (with respect to the product \rref{p:action}),

\item
\label{def:beta:1:homogeneous}
if $E$ has constant rank, then $\beta_E$ is homogeneous,

\item if $\alpha \colon E \to F$ is an isomorphism of $G$-equivariant vector bundles over $X \in \Sm^G_S$, then $\alpha^*\beta_F = \beta_E$ (see \eqref{eq:pb_vb}),

\item
\label{def:beta:1:pb}
if $f \colon Y \to X$ is a morphism in $\Sm^G_S$ and $E \to X$ is a $G$-equivariant vector bundle, then $f^*\beta_E=\beta_{f^*E}$,

\item
\label{def:beta:1:direct_sum}
if $E,F$ are $G$-equivariant vector bundles over $X \in \Sm^G_S$, then $\beta_E \cdot \beta_F = \beta_{E \oplus F}$ under the product \eqref{eq:prod}.
\end{enumerate}
By a \emph{pseudo-orientation} of $A \in \SH(S)$, we will mean a $1$-equivariant pseudo-orientation of $A$.
\end{definition}

\begin{para}
\label{p:beta:2}
Given a $G$-equivariant pseudo-orientation of a ring spectrum $A \in \SH^G(S)$ and a $G$-equivariant vector bundle $E$ over $X \in \Sm^G_S$, we consider the morphism $e_E^* \colon A^{*,*}(X;-E) \to A^{*,*}(X)$ given by precomposition with $e_E \colon \Un_X \to \Sph^E$, and define a class
\[
b(E) = e_E^*(\beta_E) \in A^{*,*}(X).
\]
Note that $b(E)$ is a central element in the ring $A^{*,*}(X)$.
\end{para}

\begin{lemma}
\label{b:sum}
Let $A \in \SH^G(S)$ be a ring spectrum equipped with a $G$-equivariant pseudo-orientation. Then $b(E \oplus F) = b(E) \cdot b(F)$ for every $G$-equivariant vector bundles $E,F$ over $X \in \Sm^G_S$.
\end{lemma}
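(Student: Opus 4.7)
The plan is to combine the multiplicativity of $\beta$ in the pseudo-orientation (condition \eqref{def:beta:1:direct_sum} of \rref{def:beta:1}) with the multiplicativity of Euler classes under direct sum (\rref{p:Euler_sum}), together with the naturality of the cup product along pullback.

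More precisely, the first step is to invoke \eqref{def:beta:1:direct_sum} to obtain the identity
\[
\beta_E \cdot \beta_F = \beta_{E \oplus F} \quad \text{in} \quad A^{*,*}(X;-E-F).
\]
Applying the pullback $e_{E \oplus F}^*$ (given by precomposition with $e_{E \oplus F} \colon \Un_X \to \Sph^{E \oplus F}$) to both sides, the right-hand side becomes $b(E \oplus F)$ by definition \rref{p:beta:2}. It therefore remains to identify the left-hand side with $b(E) \cdot b(F) = e_E^*(\beta_E) \cdot e_F^*(\beta_F)$.

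This last identification is the only real content. Unwinding definitions, the product $e_E^*(\beta_E) \cdot e_F^*(\beta_F) \in A^{*,*}(X)$ is computed as the composite
\[
\Un_X \simeq \Un_X \wedge \Un_X \xrightarrow{e_E \wedge e_F} \Sph^E \wedge \Sph^F \xrightarrow{\beta_E \wedge \beta_F} \pi_X^*A \wedge \pi_X^*A \xrightarrow{\mu} \pi_X^*A,
\]
while $e_{E \oplus F}^*(\beta_E \cdot \beta_F)$ is computed as
\[
\Un_X \xrightarrow{e_{E \oplus F}} \Sph^{E \oplus F} \simeq \Sph^E \wedge \Sph^F \xrightarrow{\beta_E \wedge \beta_F} \pi_X^*A \wedge \pi_X^*A \xrightarrow{\mu} \pi_X^*A.
\]
By \rref{p:Euler_sum}, the composite $\Un_X \xrightarrow{e_{E \oplus F}} \Sph^{E \oplus F} \simeq \Sph^E \wedge \Sph^F$ agrees with $e_E \wedge e_F$ (after the canonical identification $\Un_X \simeq \Un_X \wedge \Un_X$). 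Hence the two composites coincide, giving the required equality.

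The main obstacle, such as it is, is merely notational: keeping track of the twists and of the isomorphism $\Sph^{E \oplus F} \simeq \Sph^E \wedge \Sph^F$ under which both the pseudo-orientation classes and the Euler classes behave multiplicatively. Once one has set up the diagram above, the result follows immediately from \rref{p:Euler_sum} and the $A^{*,*}(X)$-centrality condition \eqref{def:beta:1:central} (the latter ensuring that the cup product in the statement is well-defined without ambiguity of ordering).
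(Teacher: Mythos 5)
Your proof is correct and takes essentially the same approach as the paper, which also deduces the statement directly from \rref{p:Euler_sum} and \dref{def:beta:1}{def:beta:1:direct_sum}; you have simply spelled out the unwinding of the cup products that the paper leaves implicit.
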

\begin{proof}
This follows from \rref{p:Euler_sum} and \dref{def:beta:1}{def:beta:1:direct_sum}.
\end{proof}

\begin{example}
\label{ex:GL_pseudo}
If $A \in \SH(S)$ is a $\GL$-oriented commutative ring spectrum \cite[Definition~2.1]{PPR-MGL}, one may take for $\beta_E$ the element corresponding to $1$ under the Thom isomorphism $A^{*,*}(X;-E) \simeq A^{*,*}(X)$. This defines a pseudo-orientation of $A$, for which $b(E)$ is the Euler class of $E$, usually denoted by $e(E)$.
\end{example}

\begin{remark}
A pseudo-orientation of a commutative ring spectrum $A \in \SH(S)$ is a $\GL$-orientation if and only if $\beta_1 = 1$ under the canonical identification $A^{2,1}(S;-1) \simeq A^{0,0}(S)$.
\end{remark}

\begin{example}
\label{ex:hyp}
Assume that $A \in \SH(S)$ is a hyperbolically oriented ring spectrum \cite[Definition~2.2.2]{hyp} (for instance $A$ could be an $\Sp$-oriented commutative ring spectrum). Then one may take for $\beta_E$ the Euler class $e(E) \in A^{*,*}(X;-E)$; this defines a pseudo-orientation of $A$, for which $b(E)$ is the top Pontryagin class $\pi(E)$ (see \cite[Definition~2.2.7]{hyp}, recalling from \rref{rem:conv_twist} that the conventions for the twisted cohomology are ``opposite'').
\end{example}

\begin{remark}
An important difference with $\GL$-orientations is that in a pseudo-oriented ring spectrum $A \in \SH(S)$, the cup-product with the class $\beta_E$ need not induce an isomorphism $A^{*,*}(X) \xrightarrow{\sim} A^{*,*}(X;-E)$. For one thing, setting $\beta_E=0$ for all $E$ defines a pseudo-orientation on any ring spectrum $A$. A more interesting example is provided by \rref{ex:hyp}, where even when the hyperbolic orientation of $A$ is induced by a $\GL$-orientation, the cup-product with $\beta_E$ is not an isomorphism (as the Euler class is typically not invertible).
\end{remark}

\subsection{Concentration for equivariant theories}
\begin{para}
\label{p:G_conditions} 
We now let $G$ be a group scheme over $S$, and a $C\subset G$ a closed normal subgroup scheme. As before, we assume that:
\begin{enumerate}[(a)]
\item $G$ and $C$ are both linearly reductive \rref{def:lin_red},

\item $G$ has the resolution property \rref{def:resolution},

\item the $\Oc_S$-module $\Oc_S[C]$ is locally projective \rref{p:def_loc_proj}.
\end{enumerate}
\end{para}

\begin{para}
\label{p:eu_C}
Let $A \in \SH^G(S)$ be a ring spectrum equipped with a $G$-equivariant pseudo-orientation. Let us set
\[
\eu_C =\{b(V), \text{ where } V \in \mathcal{V}_C \} \subset A^{*,*}(S).
\]
By \rref{p:VC_stable_sum} and \rref{b:sum}, this is a multiplicative subset of central elements in the ring $A^{*,*}(S)$. When $M$ is a left, resp.\ right, $A^{*,*}(S)$-module, we will denote by $M[\eu_C^{-1}]$ its localization at this subset. Note that by \dref{def:beta:1}{def:beta:1:pb} and \dref{def:beta:1}{def:beta:1:central}, when $X \in \Sm^G_S$ the elements of $\eu_C$ act centrally on the bimodule $A^{*,*}(X)$; in particular the left and right localizations $A^{*,*}(X)[\eu_C^{-1}]$ are canonically isomorphic, and form a ring.
\end{para}

\begin{proposition}
\label{prop:conc_equiv_coh}
Let $A \in \SH^G(S)$ be a ring spectrum equipped with a $G$-equivariant pseudo-orientation. Then for any $X \in \Sch^G_S$ and virtual $G$-equivariant vector bundle $\xi$ over $X$, the closed immersion $i \colon X^C \to X$ induces isomorphisms, upon left, resp.\ right, localization:
\[
i^* \colon A^{*,*}(X;\xi)[\eu_C^{-1}] \xrightarrow{\mathmakebox[2em]{\sim}} A^{*,*}(X^C;i^*\xi)[\eu_C^{-1}],
\]
\[
i_* \colon A_{*,*}(X^C;i^*\xi)[\eu_C^{-1}] \xrightarrow{\mathmakebox[2em]{\sim}} A_{*,*}(X;\xi)[\eu_C^{-1}].
\]
\end{proposition}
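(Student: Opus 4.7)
The plan is to deduce the proposition from \rref{prop:euler_section}, translating along the adjunctions $i^* \dashv i_*$ and $i_! \dashv i^!$, and then using the pseudo-orientation of $A$ to identify the Euler class morphism $e_V \in \Eu_C$ in $\SH^G(X)$ with multiplication by the central element $b(V) \in \eu_C$. First I would apply \rref{prop:euler_section} to choose some $V \in \Vc_C$ such that the unit $\eta \colon \id \to i_*i^*$ and counit $\varepsilon \colon i_!i^! \to \id$ are $e_V$-isomorphisms of endofunctors of $\SH^G(X)$. Evaluating at $M = \pi_X^*A$ and $M' = \pi_X^!A$ respectively yields $e_V$-isomorphisms $\eta_M \colon M \to N$ (with $N = i_*i^*\pi_X^*A$) and $\varepsilon_{M'} \colon N' \to M'$ (with $N' = i_!i^!\pi_X^!A$) in $\SH^G(X)$. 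Setting $T = \Sigma^{-p,-q}\Sph^{-\xi}$ and $B = \Sigma^{p,q}\Sph^\xi$, the adjunctions identify $i^* \colon A^{*,*}(X;\xi) \to A^{*,*}(X^C;i^*\xi)$ with $(\eta_M)_* \colon [T, M] \to [T, N]$, and $i_*\colon A_{*,*}(X^C;i^*\xi) \to A_{*,*}(X;\xi)$ with $(\varepsilon_{M'})_* \colon [B, N'] \to [B, M']$.

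The key step is to relate $e_V$ acting on a $\pi_X^*A$-module $E$ with structure $a_E \colon \pi_X^*A \wedge E \to E$ to the action of $\pi_X^*b(V)$. Working one bidegree at a time, view the component $\beta_V \in A^{p_0,q_0}(S;-V)$ as a morphism $\Sph^V \to \Sigma^{p_0,q_0}A$ in $\SH^G(S)$, and define
\[
\Phi_V^E \colon \Sigma^V E = \Sph^V \wedge E \xrightarrow{\pi_X^*\beta_V \wedge \id_E} \Sigma^{p_0,q_0}\pi_X^*A \wedge E \xrightarrow{\Sigma^{p_0,q_0}a_E} \Sigma^{p_0,q_0} E.
\]
Since $e_{V,E} = e_V \wedge \id_E$, we immediately get $\Phi_V^E \circ e_{V,E} = a_E \circ (\pi_X^*b(V) \wedge \id_E)$, which is left multiplication by $\pi_X^*b(V)$ on $E$. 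Moreover $\Phi_V^{(-)}$ is natural in morphisms of $\pi_X^*A$-modules.

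Next I would check that the four objects $M, N, M', N'$ carry natural $\pi_X^*A$-module structures and that $\eta_M$ and $\varepsilon_{M'}$ are $\pi_X^*A$-linear: $M$ tautologically; $N$ as a $\pi_X^*A$-algebra from the lax monoidality of $i_*$; $M'$ via the canonical pairing $\pi_X^*A \wedge \pi_X^!A \to \pi_X^!A$; and $N'$ via the projection formula $\pi_X^*A \wedge i_!(-) \simeq i_!(\pi_{X^C}^*A \wedge -)$ from \cite[Theorem~6.18~(7)]{Hoyois-equivariant}, combined with the $\pi_{X^C}^*A$-action on $\pi_{X^C}^!A$. Let $f$ be either $\eta_M$ or $\varepsilon_{M'}$, with witness $s$ as in \rref{def:e_V_isom}, and set $\widetilde{s} = \Phi_V^M \circ s$. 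Then
\[
\widetilde{s} \circ f = \Phi_V^M \circ e_{V,M} = b(V) \cdot \id_M, \qquad (\Sigma^{p_0,q_0}f) \circ \widetilde{s} = \Phi_V^N \circ e_{V,N} = b(V) \cdot \id_N,
\]
the second identity using the naturality of $\Phi_V^{(-)}$. Applying $[T,-]$ or $[B,-]$, the map $f_*$ admits both a left and a right inverse up to multiplication by $b(V) \in \eu_C$; hence $f_*$ becomes an isomorphism after inverting $\eu_C$, which yields both halves of the statement.

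The main obstacle will be the verification sketched in the third paragraph, in particular the construction of the $\pi_X^*A$-module structure on $i_!i^!\pi_X^!A$ via the projection formula and the check that $\varepsilon_{\pi_X^!A}$ is $\pi_X^*A$-linear with respect to it. Once this is in place, the rest of the argument is bookkeeping with adjunctions, bidegrees, and the defining properties of the pseudo-orientation, and the distinction between left and right localization at $\eu_C$ disappears by the centrality clause \dref{def:beta:1}{def:beta:1:central}.
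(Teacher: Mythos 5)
Your proof follows the same core strategy as the paper—apply \rref{prop:euler_section} to obtain the $e_V$-isomorphism witness, and use the pseudo-orientation to relate $e_V$ to multiplication by $b(V)$—but you take a detour that the paper avoids, and the detour contains exactly the verification you flag as ``the main obstacle.'' The paper never needs a $\pi_X^*A$-module structure on $i_*i^*\pi_X^*A$ or $i_!i^!\pi_X^!A$, nor does it need to check linearity of $\eta$ or $\varepsilon$. The reason is that the $A^{*,*}(S)$-bimodule action on $A^{*,*}(X;\xi)$, resp.\ $A_{*,*}(X;\xi)$, is \emph{defined} in \rref{p:action} as postcomposition with $\pi_X^*l_a$, resp.\ $\pi_X^!l_a$, where $l_a \colon A \to \Sigma^{p,q}A$ lives over $S$. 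By \rref{p:beta:2}, $l_{b(V)}$ factors in $\SH^G(S)$ as $A \xrightarrow{e_{V,A}} \Sigma^V A \to \Sigma^{p_0,q_0}A$; pulling back along $\pi_X^*$ or $\pi_X^!$ and then mapping out of the sphere immediately exhibits the $b(V)$-action as factoring through postcomposition with the vertical arrows of the relevant diagrams. Compatibility with $i^*$ and $i_*$ is then automatic from the fact that $\eta \colon \id \to i_*i^*$ and $\varepsilon \colon i_!i^! \to \id$ are natural transformations evaluated on maps pulled back from $S$—no projection-formula module structure and no linearity check are required. Two smaller remarks: your claim that ``the distinction between left and right localization disappears by \dref{def:beta:1}{def:beta:1:central}'' is too quick, since centrality in the ring $A^{*,*}(S)$ does not by itself identify the morphisms $l_{b(V)}$ and $r_{b(V)}$ in $\SH^G(S)$; the paper handles the right action case by a separate explicit observation, namely that $\id_A \wedge e_V$ also factors through $e_{V,A}$ via the symmetry $\Sph^V \wedge A \simeq A \wedge \Sph^V$. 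And note the statement is for general $X \in \Sch^G_S$, where the left and right localizations of $A^{*,*}(X;\xi)$ need not coincide, so the two cases genuinely have to be treated separately.
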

\begin{proof}
By \rref{prop:euler_section}, we may find $V\in \mathcal{V}_C$ and morphisms $u,v$ fitting into commutative diagrams
\[
\xymatrix{
\pi_X^*A \ar[r] \ar[d]_{\pi_X^*e_{V,A}} & i_*i^*\pi_X^*A \ar[d]^{i_*i^*\pi_X^*e_{V,A}}\ar[ld]_u&& i_!i^!\pi_X^!A \ar[r] \ar[d]_{i_!i^!\pi_X^!e_{V,A}}& \pi_X^!A \ar[d]^{\pi_X^!e_{V,A}}\ar[ld]_v\\
\pi_X^*\Sigma^V A \ar[r] & i_*i^*\pi_X^*\Sigma^V A&&  i_!i^!\pi_X^!\Sigma^V A\ar[r]& \pi_X^!\Sigma^V A 
}
\]
Mapping out of $\Sigma^{-p,-q}\Sph^{-\xi}$, resp.\ $\Sigma^{p,q}\Sph^\xi$, we obtain commutative diagrams
\[
\xymatrix{
A^{*,*}(X;\xi)\ar[r]^-{i^*} \ar[d] & A^{*,*}(X^C;i^*\xi)\ar[d]\ar[ld] & A_{*,*}(X^C;i^*\xi) \ar[r]^-{i_*} \ar[d]& A_{*,*}(X;\xi) \ar[d]\ar[ld]\\
\Sigma^V A^{*,*}(X;\xi)\ar[r]^-{i^*} &\Sigma^V A^{*,*}(X^C;i^*\xi)& \Sigma^VA_{*,*}(X^C;i^*\xi) \ar[r]^-{i_*} & \Sigma^V A_{*,*}(X;\xi) 
}
\]
where the vertical arrows are induced by $e_{V,A}$ (postcomposing). Now, it follows from \rref{p:action} and \rref{p:beta:2} that the actions of $b(V) \in A^{*,*}(S)$ on $A^{*,*}(X;\xi)$ and $A_{*,*}(X^C;i^*\xi)$ factors through (precomposition by) the vertical arrows. (The case of the right actions uses the factorization of the morphism $A \simeq A \wedge \Un_S \xrightarrow{\id_A \wedge e_V} A \wedge \Sph^V$ as $A \xrightarrow{e_{V,A}} \Sigma^VA=\Sph^V \wedge A\simeq A \wedge \Sph^V$.) Since $b(V) \in \eu_C$, this implies the statements.
\end{proof}

\begin{lemma}
\label{lemm:summand}
Let $E$ be a $G$-equivariant vector bundle over $X \in \Sch^G_S$. Assume that $C$ acts trivially on $X$, and that $E^C=0$. Then there exists an element $V \in \Vc_C$ and a $G$-affine bundle $Y \to X$ with $Y$ affine over $S$, such that the pullback of $E$ along the composite $Y^C \subset Y \to X$ is a $G$-equivariant quotient of $\pi_{Y^C}^*V$.
\end{lemma}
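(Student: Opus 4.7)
The plan is to reduce to a quasi-affine situation via Jouanolou's trick and then invoke \rref{lemm:quotien_free}. First I would apply the equivariant Jouanolou trick \cite[Proposition~2.20]{Hoyois-equivariant} to $X$, producing a $G$-affine bundle $Y \to X$ with $Y$ affine over $S$. The scheme $Y$ is noetherian (being of finite type over the noetherian scheme $X$), so by \rref{p:def_fixed_locus} (using that $\Oc_S[C]$ is locally projective) there is a closed subscheme $Y^C \subset Y$; it is noetherian and inherits from $Y$ the property of being affine, hence in particular quasi-affine, over $S$. Since $C \subset G$ is normal, $Y^C$ is $G$-invariant in $Y$ and thus carries a $G$-action for which $C$ acts trivially by construction.

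Next, let $f \colon Y^C \hookrightarrow Y \to X$ denote the composite $G$-equivariant morphism, and write $\Ec$ for the sheaf of sections of $E$. The hypothesis $E^C = 0$ translates to $\Ec^C = 0$, so \rref{lemm:pullback_fixed} yields $(f^*\Ec)^C = f^*(\Ec^C) = 0$. The crucial step is then to apply \rref{lemm:quotien_free} to the noetherian quasi-affine $S$-scheme $Y^C$ (with trivial $C$-action) and to the $G$-equivariant coherent $\Oc_{Y^C}$-module $f^*\Ec$: this produces a $G$-equivariant locally free of finite rank $\Oc_S$-module $\mathcal{W}$ with $\mathcal{W}^C = 0$, together with a surjection $\pi_{Y^C}^*\mathcal{W} \to f^*\Ec$ in $\Qcoh^G(Y^C)$. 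Taking $V \to S$ to be the $G$-equivariant vector bundle whose sheaf of sections is $\mathcal{W}$, we obtain $V \in \Vc_C$ together with the desired presentation of $f^*E$ as a $G$-equivariant quotient of $\pi_{Y^C}^*V$.

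I do not expect a genuine obstacle: the two key ingredients, Jouanolou's trick and \rref{lemm:quotien_free}, are directly available, and the only routine verification is that the composition of the affine morphism $Y \to S$ with the closed immersion $Y^C \hookrightarrow Y$ remains affine (hence quasi-affine), so that the hypotheses of \rref{lemm:quotien_free} are indeed met by $Y^C$.
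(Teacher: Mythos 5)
Your proof is correct and follows essentially the same path as the paper's: apply the equivariant Jouanolou trick to get $Y \to X$ with $Y$ affine over $S$, pull back $E$ along $Y^C \hookrightarrow Y \to X$, note that $(f^*E)^C = 0$ via \rref{lemm:pullback_fixed}, and then invoke \rref{lemm:quotien_free} on the noetherian quasi-affine scheme $Y^C$. Your proposal merely spells out a bit more explicitly why $Y^C$ is noetherian, (quasi-)affine over $S$, and equipped with a $G$-action on which $C$ acts trivially, which the paper leaves implicit.
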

\begin{proof}
By Jouanolou's trick \cite[Proposition~2.20]{Hoyois-equivariant} we may find a $G$-affine bundle $Y \to X$ such that $Y$ is affine over $S$. Let $f\colon Y^C \to Y \to X$ be the composite. Note that $(f^*E)^C=0$ by \rref{lemm:pullback_fixed}. Applying \rref{lemm:quotien_free} to the sheaf of sections of the $G$-equivariant vector bundle $f^*E \to Y^C$, we find $V \in \Vc_C$ and a surjective morphism $\pi_{Y^C}^*V \to f^*E$ of $G$-equivariant vector bundles over $Y^C$.
\end{proof}

\begin{lemma}
\label{lemm:is_invertible}
Let $A \in \SH^G(S)$ be a ring spectrum equipped with a $G$-equivariant pseudo-orientation. Let $E$ be a $G$-equivariant vector bundle over $X \in \Sm^G_S$. Assume that $C$ acts trivially on $X$, and that $E^C=0$. Then the element $b(E)$ becomes invertible in the ring $A^{*,*}(X)[\eu_C^{-1}]$.
\end{lemma}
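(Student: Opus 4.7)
The plan is to reduce to an affine $G$-affine bundle over $X$ on which $E$ becomes a $G$-equivariant direct summand of the pullback of some $V \in \Vc_C$, so that $b(E)$ will appear as a factor of the element $b(V) \in \eu_C$.

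First I would apply \rref{lemm:summand} to produce $V \in \Vc_C$ and a $G$-affine bundle $Y \to X$ with $Y$ affine over $S$, together with a surjection $p \colon \pi_{Y^C}^* V \twoheadrightarrow f^*E$ of $G$-equivariant vector bundles on $Y^C$, where $f \colon Y^C \hookrightarrow Y \to X$ denotes the composite. Then $Y^C$ is affine over $S$ (as a closed subscheme of $Y$) and smooth over $S$ by \rref{p:smooth_fixed_locus}, so $Y^C \in \Sm^G_S$ and the pseudo-orientation formalism applies.

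The crucial step, which I expect to be the main obstacle, is splitting $p$ as a $G$-equivariant morphism. Setting $K = \ker p$, the local freeness of $f^*E$ makes $\Hom_{\Oc_{Y^C}}(f^*E,-)$ exact, so applying it to the short exact sequence $0 \to K \to \pi_{Y^C}^*V \to f^*E \to 0$ yields a surjection of $G$-equivariant quasi-coherent $\Oc_{Y^C}$-modules onto the sheaf $\Hom_{\Oc_{Y^C}}(f^*E,f^*E)$. Pushing forward to $S$ along the affine morphism $\pi_{Y^C}$ preserves the surjection, and then so does $(-)^G$ by \rref{cor:lin_red_inv_exact}---this is where the full linear reductivity of $G$ (and not merely of $C$) is used. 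Taking global sections and lifting $\id_{f^*E}$ then yields a $G$-equivariant section of $p$, hence a $G$-equivariant isomorphism $\pi_{Y^C}^*V \simeq f^*E \oplus K$.

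From there, combining \rref{b:sum} with the naturality \dref{def:beta:1}{def:beta:1:pb} gives the identity
\[
\pi_{Y^C}^*b(V) = b(\pi_{Y^C}^*V) = b(f^*E) \cdot b(K) = f^*b(E) \cdot b(K)
\]
in $A^{*,*}(Y^C)$. The left-hand side is the image of $b(V) \in \eu_C \subset A^{*,*}(S)$ under the $A^{*,*}(S)$-module structure, hence becomes invertible in $A^{*,*}(Y^C)[\eu_C^{-1}]$; the centrality of $b(f^*E)$ and $b(K)$ guaranteed by \dref{def:beta:1}{def:beta:1:central} then provides a two-sided inverse $b(K) \cdot (\pi_{Y^C}^*b(V))^{-1}$ for $f^*b(E)$ in the localization. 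To conclude, I would observe that since $C$ acts trivially on $X$, the morphism $f$ is itself a $G$-affine bundle (a torsor under $F^C$ if $Y \to X$ is a torsor under the $G$-equivariant vector bundle $F$). Homotopy invariance \cite[Theorem~6.18~(8)]{Hoyois-equivariant} then makes $f^* \colon A^{*,*}(X) \to A^{*,*}(Y^C)$ an isomorphism, which remains so after inverting $\eu_C$, and the invertibility of $f^*b(E)$ descends to that of $b(E)$ in $A^{*,*}(X)[\eu_C^{-1}]$.
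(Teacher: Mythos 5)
Your overall strategy (reduce to $Y^C$ via \rref{lemm:summand}, realize $E$ as a summand of a pullback of some $V \in \Vc_C$, then factor $b(V)$) is the right one and matches the paper's, but there is a genuine gap at the splitting step. After reducing to $Y^C$, you push the surjection $\mathcal{H}om(f^*E,\pi_{Y^C}^*V) \twoheadrightarrow \mathcal{H}om(f^*E,f^*E)$ forward along the affine morphism $\pi_{Y^C}$ and apply $(-)^G$, correctly obtaining a surjection of quasi-coherent $\Oc_S$-modules; but you then ``take global sections and lift $\id_{f^*E}$,'' and $H^0(S,-)$ is not right-exact over a general noetherian base $S$. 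Since $S$ is only assumed noetherian (not affine), $Y^C$ is affine over $S$ but not an affine scheme, and the global lift need not exist. This is precisely the obstacle the paper confronts: it splits the sequence $0 \to F \to \pi_X^*V \to E \to 0$ only after restricting to $X \times_S U_k$ for a finite affine cover $U_1,\dots,U_n$ of $S$ (using \cite[Lemma~2.17]{Hoyois-equivariant}), concluding that $b(V) - b(F)b(E)$ restricts to zero on each piece, and then invokes \rref{lemm:vanish_product_cover} to obtain $\bigl(b(V) - b(F)b(E)\bigr)^n = 0$ globally, from which $b(E)$ divides $b(V)^n$ in $A^{*,*}(X)$. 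Your argument would need this covering/nilpotence device (or an assumption that $S$ is affine) to go through.

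A smaller remark: for the final reduction you assert that $f \colon Y^C \to X$ is itself a $G$-affine bundle (a torsor under $F^C$ when $Y \to X$ is a torsor under $F$), and invoke homotopy invariance directly. The paper instead composes homotopy invariance for $Y \to X$ with the concentration isomorphism $A^{*,*}(Y)[\eu_C^{-1}] \simeq A^{*,*}(Y^C)[\eu_C^{-1}]$ from \rref{prop:conc_equiv_coh}. Your shortcut is plausible (linear reductivity of $C$ should make the $F$-torsor $Y$ admit a $C$-equivariant trivialization Zariski-locally on $X$, whence $Y^C$ is an $F^C$-torsor), but it is an unproved claim not available in the paper, so if you keep it you should justify it; the paper's route avoids the issue entirely.
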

\begin{proof}
We apply \rref{lemm:summand} and use its notation. Note that by \rref{p:smooth_fixed_locus} we have $Y^C \in \Sm^G_S$. The pullback $A^{*,*}(X) \to A^{*,*}(Y)$ is an isomorphism by homotopy invariance \cite[Theorem~6.18~(8)]{Hoyois-equivariant}, and by \rref{prop:conc_equiv_coh} the pullback $A^{*,*}(Y) \to A^{*,*}(Y^C)$ becomes an isomorphism after localizing at $\eu_C$. Therefore the pullback $A^{*,*}(X)[\eu_C^{-1}] \to A^{*,*}(Y^C)[\eu_C^{-1}]$ is a ring isomorphism. Thus, replacing $X$ with $Y^C$, we may assume that $X$ is affine over $S$, and that we have an exact sequence of $G$-equivariant vector bundles over $X$
\[
0 \to F \to \pi_X^*V \to E \to 0,
\]
for some $V \in \Vc_C$. Pick a finite cover of $S$ by affine open subschemes $U_1,\dots,U_n$. For each $k \in \{1,\dots,n\}$, the restriction to the affine scheme $X \times_S U_k$ of the above exact sequence splits in the category of $G$-equivariant vector bundles by \cite[Lemma~2.17]{Hoyois-equivariant}, in view of \rref{rem:lin_red}. Therefore by \rref{b:sum} the element $b(V) - b(F) b(E)$ restricts to zero in $A^{*,*}(X \times_S U_k)$. Then it follows from \rref{lemm:vanish_product_cover} that
\[
(b(V) - b(F) b(E))^n=0 \; \text{in $A^{*,*}(X)$}.
\]
In particular $b(E)$ divides $b(V)^n$ in $A^{*,*}(X)$, whence the result.
\end{proof}

\begin{proposition}
\label{prop:inverse_1}
Let $A \in \SH^G(S)$ be a ring spectrum equipped with a $G$-equivariant pseudo-orientation. Let $X \in \Sm^G_S$, and denote by $N$ the normal bundle to the closed immersion $i \colon X^C \to X$. Then $b(N)$ becomes invertible in $A^{*,*}(X^C)[\eu_C^{-1}]$ and the pushforward defined in \rref{eq:pf} verifies
\[
i_*(b(N)^{-1} \cdot \beta_N)=1 \in A^{*,*}(X)[\eu_C^{-1}].
\]
\end{proposition}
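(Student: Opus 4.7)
The plan is as follows. First I verify the invertibility claim: by \rref{p:smooth_fixed_locus} the fixed locus $X^C$ is smooth over $S$, the normal bundle $N \to X^C$ satisfies $N^C = 0$, and $C$ acts trivially on $X^C$. Then \rref{lemm:is_invertible} applies directly to give that $b(N)$ is invertible in the ring $A^{*,*}(X^C)[\eu_C^{-1}]$, so that the class $b(N)^{-1} \cdot \beta_N \in A^{*,*}(X^C;-N)[\eu_C^{-1}]$ is well defined.

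For the formula $i_*(b(N)^{-1}\beta_N) = 1$, the strategy is to check it after applying $i^*$, since by \rref{prop:conc_equiv_coh} the pullback $i^*$ induces an isomorphism $A^{*,*}(X)[\eu_C^{-1}] \xrightarrow{\sim} A^{*,*}(X^C)[\eu_C^{-1}]$. So the task reduces to proving the self-intersection identity
\[
i^*\bigl(i_*(b(N)^{-1}\beta_N)\bigr) = 1 \quad \text{in } A^{*,*}(X^C)[\eu_C^{-1}].
\]

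The central step is a general self-intersection formula $i^* i_*(\gamma) = e_N^*(\gamma)$ for any $\gamma \in A^{*,*}(X^C;-N)$. I would prove this by unwinding the definitions in \rref{p:smooth_coh}: the pushforward $i_*(\gamma)$ is precomposition in $\SH^G(S)$ with the Thom--Pontryagin map $\overline{i}\colon \Sup X \to \Su \Th N$, while $i^*$ is precomposition with $\Sup i$, and by \rref{p:push_pull} we have $\overline{i} \circ \Sup i = (\pi_{X^C})_\sharp(e_N)$. Transferring across the adjunction $(\pi_{X^C})_\sharp \dashv \pi_{X^C}^*$, precomposition with $(\pi_{X^C})_\sharp(e_N)$ corresponds precisely to the map $e_N^*$ of \rref{p:beta:2}, i.e.\ precomposition with $e_N$ inside $\SH^G(X^C)$. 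Applying this identity to $\gamma = b(N)^{-1}\beta_N$ and using that $e_N^*$ is $A^{*,*}(X^C)$-linear (immediate from the smash-product description of the product in \rref{p:action}, since $e_N$ is smashed only on one factor), I obtain
\[
e_N^*(b(N)^{-1}\beta_N) = b(N)^{-1} \cdot e_N^*(\beta_N) = b(N)^{-1} \cdot b(N) = 1,
\]
which completes the argument. I do not anticipate any serious obstacle, since every ingredient has already been prepared in the paper; the only mild technical point is the $A^{*,*}(X^C)$-linearity of $e_N^*$, which reduces to routine smash-product bookkeeping.
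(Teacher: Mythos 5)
Your argument is correct and follows the paper's proof essentially verbatim: the invertibility of $b(N)$ via \rref{p:smooth_fixed_locus} and \rref{lemm:is_invertible}, the reduction to checking after $i^*$ via \rref{prop:conc_equiv_coh}, the identification of $i^* \circ i_*$ with $e_N^*$ via \rref{p:push_pull} and \rref{p:smooth_coh}, and finally the computation $e_N^*(b(N)^{-1}\beta_N) = b(N)^{-1}b(N) = 1$. The only cosmetic difference is that you make explicit the adjunction $(\pi_{X^C})_\sharp \dashv \pi_{X^C}^*$ when translating precomposition with $(\pi_{X^C})_\sharp(e_N)$ in $\SH^G(S)$ into precomposition with $e_N$ in $\SH^G(X^C)$, which the paper leaves implicit.
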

\begin{proof}
The first statement follows from \rref{p:smooth_fixed_locus} and \rref{lemm:is_invertible}. By \rref{p:push_pull} and \rref{p:smooth_coh} the composite
\[
A^{*,*}(X^C;-N) \xrightarrow{i_*} A^{*,*}(X) \xrightarrow{i^*} A^{*,*}(X^C)
\]
is given by precomposition with $e_N \colon \Un_{X^C} \to \Sph^N$ in $\SH^G(X^C)$, and thus maps $y \cdot \beta_N$ to $y \cdot b(N)$ for any $y \in A^{*,*}(X^C)$. Localizing at $\eu_C$, and taking $y=b(N)^{-1} \in  A^{*,*}(X^C)[\eu_C^{-1}]$, we deduce that $i^* \circ i_*(b(N)^{-1} \cdot \beta_N)=1$ in $A^{*,*}(X^C)[\eu_C^{-1}]$. Since $i^* \colon A^{*,*}(X)[\eu_C^{-1}] \to A^{*,*}(X^C)[\eu_C^{-1}]$ is a ring isomorphism by \rref{prop:conc_equiv_coh}, this implies that $i_*(b(N)^{-1} \cdot \beta_N)=1$.
\end{proof}

\begin{example}
Hoyois provided a definition of the $G$-equivariant homotopy $K$-theory of a scheme $X \in \Sch^G_S$, represented by a ring spectrum $\KGL_{[S/G]} \in \SH^G(S)$ such that $\KGL_{[X/G]} \simeq \pi_X^*\KGL_{[S/G]}$ (see \cite[\S5]{Hoyois-cdh-equiv}). When $V \to X$ is a $G$-equivariant vector bundle, the composite in $\SH^G(X)$
\[
\Sph^V \xrightarrow{\id \wedge 1} \Sph^V \wedge \KGL_{[X/G]} \simeq \KGL_{[X/G]},
\]
where the isomorphism is given by Bott periodicity (see \cite{Hoyois-cdh-equiv}), yields an element $\beta_V \in (\KGL_{[S/G]})^{0,0}(X;-V)$, endowing the ring spectrum $\KGL_{[S/G]} \in \SH^G(S)$ with a $G$-equivariant pseudo-orientation. Therefore \rref{prop:conc_equiv_coh} and \rref{prop:inverse_1} yield a concentration theorem for homotopy $K$-theory (involving $(\KGL_{[S/G]})^{*,*}$) and, when $S$ is regular, for equivariant $K$-theory of coherent sheaves (involving $(\KGL_{[S/G]})_{*,*}$, see \cite[Remark~5.7]{Hoyois-cdh-equiv}). This extends Thomason's concentration theorem for $K$-theory \cite{Thomason-Lefschetz} in two directions: it allows the group $G$ to be of multiplicative type instead of diagonalizable (in fact $G$ and $C$ need only satisfy the conditions of \rref{p:G_conditions}, but see \rref{rem:mult_type_oriented}), and includes the cohomological version for singular $X$ (using homotopy $K$-theory instead of $K$-theory).
\end{example}

\subsection{Borel's construction}
\label{sect:Borel}
In this section, we come back to assuming that $G$ is an arbitrary group scheme over $S$.

\begin{para}
\label{p:Schq}
We denote by $\Schq_S^G$ the full subcategory of $\Sch_S^G$ whose objects are those schemes $F$ with a free $G$-action such that the fppf quotient $F/G$ exists in $\Sch_S$. It follows from \cite[Proposition~7.1]{GIT} that when $Y \to F$ is a morphism in $\Sch_S^G$ and $F \in \Schq_S^G$, then $Y \in \Schq_S^G$. Given $F \in \Schq_S^G$ and $X \in \Sch^G_S$, we will write $X_F = (X \times_S F)/G \in \Sch_{F/G}$. A morphism $f \colon Y \to X$ in $\Sch_S^G$ induces a morphism $f_F \colon Y_F \to X_F$ in $\Sch_{F/G}$.
\end{para}

\begin{para}
\label{p:direct_system}
From now on, we will assume given a direct system $E_mG$ for $m \in \Nn$ in the category $\Schq^G_S$. For each $m$, we set $B_mG = E_m/G \in \Sch_S$, and we will additionally assume that $E_mG \in \Sm_S$, which implies that $B_mG \in \Sm_S$ by \cite[Tag~\href{https://stacks.math.columbia.edu/tag/05B5}{05B5}]{stacks}.
\end{para}

\begin{remark}
As suggested by the notation, the system $E_mG$ will typically satisfy additional conditions ensuring that the family $B_mG$ forms an approximation of $BG$ (see \rref{p:model} below), but such conditions will play no role in this section.
\end{remark}

\begin{para}
\label{p:supported}
Let $\Gamma$ be an abelian group and $M$ a $\Gamma$-graded group. For $\gamma \in \Gamma$, let us denote by $M^\gamma$ the graded component of $M$ of degree $\gamma$. For a subset $F \subset \Gamma$, we will say that an element $x \in M$ is \emph{supported in degrees $F$} if $x \in \bigoplus_{\gamma \in F} M^\gamma$. 

Let now $M_m$, for $m\in \Nn$, be an inverse system of $\Gamma$-graded groups. Its limit in the category of $\Gamma$-graded groups is the graded group $M=\lim_m M_m$ whose graded component $M^\gamma$, for $\gamma\in \Gamma$, is the limit of the (non-graded) abelian groups $(M_m)^\gamma$ over $m \in \Nn$. (Note that $M$ differs in general from the limit computed in the category of abelian groups.) In particular every element of $M$ is determined by its images in each $M_m$, for $m\in \Nn$. Conversely, a family $x_m \in M_m$ compatible with the transition maps defines an element of $M$ if and only if there exists a finite subset $F \subset \Gamma$ such that each $x_m$ is supported in degrees $F$.
\end{para}

\begin{definition}
\label{p:Borel}
(See \cite[\S1.2]{Di_Lorenzo-Mantovani}, \cite[\S4]{Levine-Atiyah-Bott}.) For every $X \in \Sch^G_S$ and $m\in \Nn$, we will write 
\[
X_m = X_{E_mG} = (X \times_S E_mG)/G \in \Sch_{B_mG}.
\]
If $V$ is a $G$-equivariant vector bundle over $X$, then $V_m$ is a $G$-equivariant vector bundle  on $X_m$. This extends naturally to virtual bundles: to a virtual $G$-equivariant vector bundle $\xi$ over $X$ one associates a virtual vector bundle $\xi_m$ over each $X_m$. Let now $A \in \SH(S)$, and write $A_m = (\pi_{B_mG})^*A \in \SH(B_mG)$. For $X$ and $\xi$ as above, we have for each $m \in \Nn$ natural morphisms (see \rref{p:relative_pb})
\[
(A_{m+1})^{*,*}(X_{m+1};\xi_{m+1}) \to (A_m)^{*,*}(X_m;\xi_m),
\]
\[
(A_{m+1})_{*,*}(X_{m+1};\xi_{m+1}) \to (A_m)_{*,*}(X_m;\xi_m).
\]
We set
\[
A_G^{*,*}(X;\xi) = \lim_m (A_m)^{*,*}(X_m;\xi_m) \quad \text{and} \quad A^G_{*,*}(X;\xi) = \lim_m (A_m)_{*,*}(X_m;\xi_m),
\]
the limits being computed in the category of $\Zz^2$-graded groups (see \rref{p:supported}).
\end{definition}

\begin{para}
If $A \in \SH(S)$ is a ring spectrum, then by \rref{p:action} the group $A^{*,*}_G(X)$ is naturally a ring for every $X \in \Sch^G_S$, and for any virtual $G$-equivariant vector bundle $\xi$ over $X$, the group $A^{*,*}_G(X;\xi)$, resp.\ $A^G_{*,*}(X;\xi)$, is an $A^{*,*}_G(S)$-bimodule.
\end{para}

\begin{para}
\label{p:module_over_nonequ}
Let $A \in \SH(S)$. Then the morphisms $\pi_{B_m G}^* \colon A^{*,*}(S) \to (A_m)^{*,*}(B_mG)$ (see \eqref{eq:relative_pb_coh}) are compatible with the transition maps as $m$ varies, hence induce a morphism
\[
A^{*,*}(S) \to A^{*,*}_G(S).
\]
When $A$ is a ring spectrum, this is a ring morphism.
\end{para}

\begin{para}
Let $A \in \SH(S)$. Let $f \colon Y \to X$ be a morphism in $\Sch^G_S$, and $\xi$ a virtual $G$-equivariant vector bundle over $X$. From \rref{p:funct_coh} we obtain a morphism $f^* \colon A^{*,*}_G(X;\xi) \to A^{*,*}_G(Y;f^*\xi)$, and if $f$ is proper, a morphism $f_*\colon A_{*,*}(Y;f^*\xi) \to A_{*,*}(X;\xi)$. If $A$ is a ring spectrum, the maps $f_*$ and $f^*$ are morphisms of $A^{*,*}_G(S)$-bimodules.
\end{para}

\begin{para}
\label{p:pf_Borel} 
Let $A \in \SH(S)$. Let $i\colon Y \to X$ be a closed immersion in $\Sm^G_S$ with normal bundle $N$. From \rref{eq:pf} we obtain a pushforward $i_*\colon A^{p,q}_G(Y;-N) \to A^{p,q}_G(X)$, which is a morphism of $A^{*,*}_G(S)$-bimodules when $A$ is a ring spectrum.
\end{para}

\begin{para}
\label{p:beta}
Let $A \in \SH(S)$ be a ring spectrum equipped with a pseudo-orientation. Let $V$ be a $G$-equivariant vector bundle over $X \in \Sm^G_S$. Then, in view of \rref{p:pb_identify}, the elements $\beta_{V_m} \in (A_m)^{*,*}(X_m;V_m)$, resp.\ $b(V_m) \in (A_m)^{*,*}(X_m)$, are compatible with the transition maps as $m$ varies.

We claim that these elements are supported in degrees $D$ (in the sense of \rref{p:supported}), where $D$ is a finite subset of $\Zz^2$ independent of $m$. To see this, we may assume that the vector bundle $V \to X$ has constant rank (because $X$ decomposes $G$-equivariantly as a disjoint union of finitely many open subschemes on which the rank of $V$ is constant). Then it follows from \dref{def:beta:1}{def:beta:1:homogeneous} that each $\beta_{V_m}$, resp.\ $b(V_m)$, is homogeneous of some degree $d_m \in \Zz^2$; the compatibility with the transition maps implies that we may arrange that $d_m$ does not depend on $m$. This proves the claim.

From the claim, it follows that these families define elements
\[
\beta^G_V \in A_G^{*,*}(X;-V) \quad \text{and} \quad b^G(V) \in A_G^{*,*}(X).
\]
\end{para}

\subsection{Concentration for Borel-type theories}
In this section, we work with groups schemes $C \subset G$ over $S$ verifying the conditions of \rref{p:G_conditions}. In addition, we assume given a system $E_mG$, for $m\in \Nn$ as in \rref{p:direct_system}. 

\begin{proposition}
\label{prop:euler_section:2}
Let $X \in \Sch^G_S$, and denote by $i\colon X^C \to X$ the closed immersion. Then there exists $V \in \mathcal{V}_C$ such that for any $F \in \Schq_S^G$, the morphisms $\id_{\SH(X_F)} \to (i_F)_* (i_F)^* \quad \text{and} \quad (i_F)_! (i_F)^! \to \id_{\SH(X_F)}$ are $e_{V_F}$-isomorphisms (see \rref{def:e_V_isom}).
\end{proposition}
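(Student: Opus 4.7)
The plan is to follow the proof of \rref{prop:euler_section} nearly verbatim, taking care to extract a single $V \in \Vc_C$ that serves for all $F$ simultaneously. To begin, I would let $j \colon U \to X$ be the open complement of $i$, note that $U^C = \varnothing$, apply \rref{cor:no_fixed_SH_vanishes} to obtain $W \in \Vc_C$ with $e_W = 0$ in $\SH^G(U)$, and set $V = W^{\oplus 2}$. The essential point is that $W$ depends only on $U$ (and hence only on $X$), not on $F$.

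The key transport step runs as follows. For any $F \in \Schq^G_S$ and $Y \in \Sch^G_S$, the $G$-action on $Y \times_S F$ is free with fppf quotient $Y_F$, so there is a natural equivalence $\SH^G(Y \times_S F) \simeq \SH(Y_F)$ that is symmetric monoidal and compatible with the six operations along $G$-equivariant closed and open immersions (standard descent in the framework of \cite{Hoyois-equivariant}). Composing with pullback along the $G$-equivariant projection $Y \times_S F \to Y$ yields a functor $\SH^G(Y) \to \SH(Y_F)$ which sends each Euler morphism $e_E$, for $E \in \Vc_C$, to $e_{E_F}$, and which intertwines the immersion functors for $i$ and $j$ with those for $i_F$ and $j_F$. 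Applied to the vanishing $e_W = 0$ in $\SH^G(U)$, this yields $e_{W_F} = 0$ in $\SH(U_F)$.

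The remainder transcribes the end of \rref{prop:euler_section}: from the distinguished triangles in $\SH(X_F)$
\[
(j_F)_!(j_F)^!\Un_{X_F} \to \Un_{X_F} \to (i_F)_*(i_F)^*\Un_{X_F}, \qquad (i_F)_!(i_F)^!\Un_{X_F} \to \Un_{X_F} \to (j_F)_*(j_F)^*\Un_{X_F},
\]
together with the vanishing of $(j_F)_!(j_F)^!e_{W_F}$ and $(j_F)_*(j_F)^*e_{W_F}$ in $\SH(X_F)$ (using $(j_F)^! \simeq (j_F)^*$ and $(j_F)^*e_{W_F} = 0$), one deduces via \rref{lemm:cone_e_V_isom} that $\Un_{X_F} \to (i_F)_*(i_F)^*\Un_{X_F}$ and $(i_F)_!(i_F)^!\Un_{X_F} \to \Un_{X_F}$ are $e_{V_F}$-isomorphisms. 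Smashing with $\id_{\SH(X_F)}$ and invoking the projection formula for the closed immersion $i_F$ then upgrades these to the required statement about functors.

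The principal obstacle is the compatibility between the equivalence $\SH^G(Y \times_S F) \simeq \SH(Y_F)$ and the six-functor formalism: one needs that pullback along $G$-equivariant morphisms, pushforward/exceptional pullback along closed immersions, and Thom spectra of $G$-equivariant vector bundles all transport correctly. Given Hoyois's construction this should be essentially automatic, but it is the genuinely new ingredient compared to \rref{prop:euler_section} and deserves a careful citation.
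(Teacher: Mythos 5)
Your argument is correct and coincides with the alternative proof that the paper itself mentions in parentheses: ``Alternatively the statement may be deduced directly from \rref{prop:euler_section} using the functor $\SH^G(X) \to \SH^G(X \times_S F) \simeq \SH(X_F)$.'' The paper's primary proof takes a more geometric route: it reduces (as you do) to the case $X^C = \varnothing$, and then, rather than transporting a vanishing statement across the equivalence, re-runs the argument of \rref{cor:no_fixed_SH_vanishes} and observes directly that a $G$-equivariant nowhere-vanishing section of $\pi_X^*V$ over $X$ descends to a nowhere-vanishing section of $\pi_{X_F}^*(V_F)$ over $X_F$ for every $F$. This is a purely scheme-theoretic observation and bypasses any discussion of compatibility between the descent equivalence and the six operations.

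One remark on your final paragraph: you somewhat overstate what needs to be checked. Because you transport only the single vanishing $e_W = 0$ from $\SH^G(U)$ to $\SH(U_F)$ and then re-run the triangle/projection-formula argument non-equivariantly inside $\SH(X_F)$, you do not actually need compatibility of the equivalence with pushforward or exceptional pullback along closed immersions --- those functors are invoked in $\SH(X_F)$ directly, with no transport. The only compatibility genuinely required is that the composite $\SH^G(U) \to \SH^G(U \times_S F) \simeq \SH(U_F)$ is symmetric monoidal and sends $\Sph^W$ to $\Sph^{W_F}$ together with the Euler morphism, which is a much milder and essentially formal requirement. Both your route and the paper's primary route reach the same conclusion; the paper's primary route is preferable precisely because it sidesteps even this mild compatibility check by working at the level of sections of vector bundles.
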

\begin{proof}
Proceeding as in \rref{prop:euler_section}, we reduce first to assuming that  $X^C=\varnothing$, and finding $V \in \mathcal{V}_C$ such that $e_{V_F}=0$ for all $F \in \Schq_S^G$. Proceeding as in \rref{cor:no_fixed_SH_vanishes}, the statement follows from \rref{th:conc_main}, noting that if the vector bundle $\pi_X^*V \to X$ has a $G$-equivariant nowhere vanishing section, then $\pi_{X_F}^*(V_F) \to X_F$ has a nowhere vanishing section for all $F \in \Schq_S^G$. (Alternatively the statement may be deduced directly from \rref{prop:euler_section} using the functor $\SH^G(X) \to \SH^G(X \times_S F) \simeq \SH(X_F)$.)
\end{proof}

\begin{lemma}
\label{lemm:ker_coker}
Let $A \in \SH(S)$ be a ring spectrum equipped with a pseudo-orientation. Let $X \in \Sch^G_S$ and let $\xi$ be a virtual $G$-equivariant vector bundle over $X$. Then there exists $V \in \Vc_C$ such that, for each $m \in \Nn$, left, resp.\ right, multiplication by $b(V_m)$ annihilates the kernels and cokernels of the morphisms
\[
(A_m)^{*,*}(X_m;\xi_m) \xrightarrow{(i_m)^*} (A_m)^{*,*}((X^C)_m;i_m^*\xi_m)
\]
\[
(A_m)_{*,*}((X^C)_m;i_m^*\xi_m) \xrightarrow{(i_m)_*} (A_m)_{*,*}(X_m;\xi_m),
\]
where $i_m \colon (X^C)_m \to X_m$ denotes the closed immersion induced by $X^C \to X$.
\end{lemma}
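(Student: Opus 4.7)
The plan is to combine Proposition \rref{prop:euler_section:2} with the diagram chase from the proof of Proposition \rref{prop:conc_equiv_coh}. First I would apply \rref{prop:euler_section:2} to the closed immersion $i \colon X^C \to X$, obtaining a single $V \in \Vc_C$ (independent of $m$) such that for every $F \in \Schq_S^G$ the morphisms of endofunctors $\id \to (i_F)_*(i_F)^*$ and $(i_F)_!(i_F)^! \to \id$ of $\SH(X_F)$ are $e_{V_F}$-isomorphisms. Specializing to $F = E_mG$ supplies, in $\SH(X_m)$, splittings $u_m$ for the cohomology side and $v_m$ for the Borel--Moore side, fitting into the defining commutative squares of Definition \rref{def:e_V_isom}.

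Fixing $m$, I would apply $u_m$ to $\pi_{X_m}^*A_m$ and map out of $\Sigma^{-p,-q}\Sph^{-\xi_m}$ to obtain a commutative diagram of bigraded abelian groups
\[
\xymatrix{
(A_m)^{*,*}(X_m;\xi_m)\ar[r]^-{(i_m)^*}\ar[d]_-{\phi} & (A_m)^{*,*}((X^C)_m;i_m^*\xi_m)\ar[d]^-{\phi'}\ar[ld]_-{(u_m)_*}\\
(A_m)^{*,*}(X_m;\xi_m+V_m)\ar[r]_-{(i_m)^*} & (A_m)^{*,*}((X^C)_m;i_m^*\xi_m+i_m^*V_m)
}
\]
whose vertical arrows $\phi,\phi'$ are post-composition with $\pi_{X_m}^*e_{V_m,A_m}$. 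Exactly as in the proof of \rref{prop:conc_equiv_coh} (combining \rref{p:action} with \rref{p:beta:2}), left, resp.\ right, multiplication by $b(V_m)$ on $(A_m)^{*,*}(X_m;\xi_m)$ factors as $\phi$ followed by multiplication with $\beta_{V_m}$. The diagram chase then runs as follows: for $\alpha \in \ker((i_m)^*)$ the identity $(u_m)_* \circ (i_m)^* = \phi$ gives $\phi(\alpha)=0$, hence $b(V_m) \cdot \alpha = 0$; for $\beta$ in the codomain of $(i_m)^*$, the second part of the $e_{V_m}$-isomorphism condition reads $\phi'(\beta) = (i_m)^*((u_m)_*\beta)$, so using $(i_m)^*\beta_{V_m} = \beta_{i_m^*V_m}$ from \dref{def:beta:1}{def:beta:1:pb} together with the naturality of the product under pullback, we obtain $b(V_m) \cdot \beta = (i_m)^*(\beta_{V_m} \cdot (u_m)_*\beta)$, which lies in the image of $(i_m)^*$. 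The argument for $(i_m)_*$ on Borel--Moore homology is entirely analogous, using $v_m$ and $\pi_{X_m}^!$ in place of $u_m$ and $\pi_{X_m}^*$, and mapping out of $\Sigma^{p,q}\Sph^{\xi_m}$.

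The only genuine subtlety is securing a choice of $V$ uniform in $m$, which is precisely the content of the universal quantifier over $F \in \Schq_S^G$ in \rref{prop:euler_section:2}; the rest is a formal transcription of the proof of \rref{prop:conc_equiv_coh} to the non-equivariant setting over each $B_mG$.
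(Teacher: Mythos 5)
Your argument follows the paper's proof essentially verbatim: pick a single $V \in \Vc_C$ using the uniformity built into Proposition~\rref{prop:euler_section:2}, specialize the $e_{V_F}$-isomorphism condition to $F = E_mG$, and then transcribe the $b(V)$-factorization chase from the proof of Proposition~\rref{prop:conc_equiv_coh}. The only cosmetic wrinkle is the notation $(i_m)^*\beta_{V_m} = \beta_{i_m^*V_m}$, which should be read via the $A^{*,*}(B_mG)$-bimodule action of \rref{p:action} rather than as classes on $X_m$ (which need not be smooth), but this does not affect the argument.
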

\begin{proof}
Let us pick $V \in \Vc$ as in \rref{prop:euler_section:2}. The rest of the proof is similar to that of \rref{prop:conc_equiv_coh}. Namely, for each $m$, we may find morphisms $u_m,v_m$ fitting into commutative diagrams
\[
\xymatrix{
\pi_{X_m}^*A_m \ar[r] \ar[d]_{\pi_{X_m}^*e_{V_m}} & (i_m)_*i_m^*\pi_{X_m}^*A_m \ar[d]^{(i_m)_*i_m^*\pi_{X_m}^*e_{V_m}}\ar[ld]_{u_m}&& (i_m)_!i_m^!\pi_{X_m}^!A_m \ar[r] \ar[d]_{(i_m)_!i_m^!\pi_{X_m}^!e_{V_m}}& \pi_{X_m}^!A_m \ar[d]^{\pi_{X_m}^!e_{V_m}}\ar[ld]_{v_m}\\
\pi_{X_m}^*\Sigma^{V_m} A_m \ar[r] & (i_m)_*i_m^*\pi_{X_m}^*\Sigma^{V_m} A_m&& (i_m)_!i_m^!\pi_{X_m}^!\Sigma^{V_m} A_m\ar[r]& \pi_{X_m}^!\Sigma^{V_m} A_m 
}
\]
Mapping out of $\Sigma^{-p,-q}\Sph^{-\xi_m}$, resp.\ $\Sigma^{p,q}\Sph^{\xi_m}$, we obtain commutative diagrams, for every $m$,
\[
\xymatrix{
(A_m)^{*,*}(X_m;\xi_m)\ar[rr]^-{i_m^*} \ar[d] && (A_m)^{*,*}((X^C)_m;i_m^*\xi_m)\ar[d]\ar[lld]\\
(\Sigma^{V_m} A_m)^{*,*}(X_m;\xi_m)\ar[rr]_-{i_m^*} &&(\Sigma^{V_m} A_m)^{*,*}((X^C)_m;i_m^*\xi_m)
}
\]
\[
\xymatrix{
(A_m)_{*,*}((X^C)_m;i_m^*\xi_m) \ar[rr]^-{(i_m)_*} \ar[d]&& (A_m)_{*,*}(X_m;\xi_m) \ar[d]\ar[lld]\\
(\Sigma^{V_m}A_m)_{*,*}((X^C)_m;i_m^*{\xi_m}) \ar[rr]_-{(i_m)_*}&& (\Sigma^{V_m} A_m)_{*,*}(X_m;\xi_m) 
}
\]
where the vertical arrows are induced by $e_{V_m}$ (postcomposing). Now, it follows from \rref{p:action} and \rref{p:beta:2} that the actions of $b(V_m) \in A_m^{*,*}(B_mG)$ on the groups $A_m^{*,*}(X_m;\xi_m)$ and $(A_m)_{*,*}((X^C)_m;i_m^*\xi_m)$ factors through (precomposition by) the vertical arrows. This implies the statements.
\end{proof}

\begin{para}
\label{p:def_eu_C}
When $A \in \SH(S)$ is a ring spectrum equipped with a pseudo-orientation, we let $\eu_C \subset A_G^{*,*}(S)$ be the set of the elements $b^G(V)$, for $V \in \Vc_C$. As in \rref{p:eu_C}, we will denote by $M[\eu_C^{-1}]$ both the left and right localizations of an $A^{*,*}_G(S)$-bimodule $M$, noting that they coincide when $M=A^{*,*}_G(X)$ for $X \in \Sm^G_S$.
\end{para}

\begin{proposition}
\label{prop:conc_Borel-type}
Let $A \in \SH(S)$ be a ring spectrum equipped with a pseudo-orientation. Then for any virtual $G$-equivariant vector bundle $\xi$ over $X \in \Sch^G_S$, the closed immersion $i \colon X^C \to X$ induces isomorphisms, upon left, resp.\ right, localization:
\[
i^* \colon A^{*,*}_G(X;\xi)[\eu_C^{-1}] \xrightarrow{\mathmakebox[2em]{\sim}} A^{*,*}_G(X^C;i^*\xi)[\eu_C^{-1}]
\]
\[
i_* \colon A_{*,*}^G(X^C;i^*\xi)[\eu_C^{-1}] \xrightarrow{\mathmakebox[2em]{\sim}} A_{*,*}^G(X;\xi)[\eu_C^{-1}].
\]
\end{proposition}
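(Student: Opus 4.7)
My plan is to bootstrap the levelwise statement of \rref{lemm:ker_coker} to the inverse limit, which requires some care since localization does not commute with inverse limits in general. I will focus on the pullback assertion; the argument for the pushforward on Borel--Moore homology is entirely analogous, using the second half of \rref{lemm:ker_coker}.

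First I would apply \rref{lemm:ker_coker} to obtain a single $V \in \Vc_C$ such that for every $m$, multiplication by $b(V_m)$ annihilates both the kernel and the cokernel of $(i_m)^*$. By the construction of $b^G(V)$ in \rref{p:beta}, the image of $b^G(V) \in A^{*,*}_G(S)$ at level $m$ is $b(V_m)$, so the action of $b^G(V)$ on $(x_m) \in A^{*,*}_G(X;\xi)$ is simply $(b(V_m) x_m)_m$. Injectivity after inverting $b^G(V)$ is then immediate: if $i^* x = 0$, then each $x_m$ lies in $\ker(i_m)^*$, hence $b(V_m) x_m = 0$, so $b^G(V) \cdot x = 0$ already in $A^{*,*}_G(X;\xi)$.

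The surjectivity is where the genuine work happens. Given $y = (y_m) \in A^{*,*}_G(X^C;i^*\xi)$, I would use \rref{p:supported} to reduce to the case where $y$ is homogeneous of some bidegree $(p,q)$, so that each $y_m$ is homogeneous of the same bidegree. Since $b(V_m)$ is homogeneous of a bidegree $d$ independent of $m$ (by \rref{p:beta}), each $b(V_m) y_m$ is homogeneous of bidegree $(p,q)+d$ and lies in the image of $(i_m)^*$, which preserves the grading; hence a homogeneous preimage $z_m$ of bidegree $(p,q)+d$ exists. These preimages need not be compatible with the transition maps $\phi_{m+1,m}$, but $(i_m)^*(\phi_{m+1,m}(z_{m+1}) - z_m) = 0$ forces each discrepancy to lie in $\ker(i_m)^*$ and hence to be annihilated by $b(V_m)$. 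Consequently the family $(b(V_m) z_m)_m$ is transitionally compatible, homogeneous of the fixed bidegree $(p,q)+2d$, and assembles into an element $w \in A^{*,*}_G(X;\xi)$ with $i^* w = b^G(V)^2 \cdot y$. After inverting $b^G(V) \in \eu_C$, this gives $i^*(b^G(V)^{-2} w) = y$.

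The principal obstacle is precisely this compatibility issue in the surjectivity step: the naive preimages $z_m$ are not automatically coherent across levels, and the remedy is to pay one extra factor of $b(V_m)$ to absorb the kernel-valued obstruction, which becomes invisible after inverting $\eu_C$. The grading bookkeeping is needed only to ensure the constructed element actually lies in the inverse limit taken in the category of $\Zz^2$-graded groups (\rref{p:supported}), and this is automatic once we reduce to a fixed bidegree.
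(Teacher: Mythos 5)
Your proof is correct and follows exactly the paper's strategy: first apply \rref{lemm:ker_coker} to fix a single $V \in \Vc_C$ such that $b(V_m)$ annihilates kernel and cokernel of $(i_m)^*$ at every level, then pass to the limit by paying one extra factor of $b^G(V)$ to repair the transition-compatibility of the naive preimages. The paper simply isolates this second step as a standalone algebraic lemma (\rref{lemm:lim_loc}) about inverse systems of $\Gamma$-graded modules, whereas you carry it out inline with a reduction to homogeneous elements (which, as you note, is licensed by \rref{p:supported}; one should just be slightly careful that $b(V_m)$ is only homogeneous after decomposing $S$ into connected components, as is done in \rref{p:beta}).
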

\begin{proof}
This follows from \rref{lemm:ker_coker} and \rref{lemm:lim_loc} below.
\end{proof}

\begin{lemma}
\label{lemm:lim_loc}
Let $\Gamma$ be an abelian group. Let $R$ be a $\Gamma$-graded ring, and $A_m,B_m,C_m,D_m$ for $m \in \Nn$ inverse systems of $\Gamma$-graded left (resp.\ right) $R$-modules. Assume given exact sequences, compatible with the transition maps as $m$ varies,
\[
A_m \to B_m \to C_m \to D_m.
\]
Let $s\in R$ be a central element such that $sA_m=0$ and $sD_m=0$ (resp.\ $A_ms=0$ and $D_ms=0$) for all $m \in \Nn$. Then
\[
(\lim_m B_m)[s^{-1}] \to (\lim_m C_m)[s^{-1}]
\]
is an isomorphism.
\end{lemma}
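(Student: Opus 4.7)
The plan is to base the proof on the elementary observation that the four-term exactness forces both $\ker(f_m)$ and $\operatorname{coker}(f_m)$ to be annihilated by $s$ (writing $f_m\colon B_m \to C_m$ for the middle maps), and then to propagate this to the inverse limit while respecting the finite-support condition of \rref{p:supported}.

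First I will unwind the exactness: $\ker(f_m)$ equals $\operatorname{im}(A_m \to B_m)$, a quotient of $A_m$, and $\operatorname{coker}(f_m) = C_m / \ker(C_m \to D_m)$ embeds into $D_m$, so both are killed by $s$. I will also reduce to the case where $s$ is homogeneous of some degree $d \in \Gamma$: decomposing $s$ into its homogeneous components $s = s_1 + \cdots + s_r$ and exploiting that $A_m, D_m$ are graded yields $s_i A_m = s_i D_m = 0$ for each $i$, after which one tracks the finite set of degrees of $s$ throughout (alternatively, the homogeneous case is the only one needed for the applications in \rref{p:def_eu_C}).

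Injectivity is then routine. If $b = (b_m) \in \lim_m B_m$ maps to zero in $(\lim_m C_m)[s^{-1}]$, then $s^k f(b) = 0$ in $\lim_m C_m$ for some $k \geq 0$, so $s^k b_m \in \ker(f_m)$ for every $m$, which gives $s^{k+1} b_m = 0$ for every $m$, hence $s^{k+1} b = 0$ in $\lim_m B_m$ and $b$ vanishes in $(\lim_m B_m)[s^{-1}]$.

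For surjectivity I will take $c = (c_m) \in \lim_m C_m$, supported in a finite set of degrees $F \subset \Gamma$ by \rref{p:supported}. For each $m$ and each $\gamma \in F$, since $\operatorname{coker}(f_m)$ is killed by $s$, I can lift $sc_m^\gamma$ to a homogeneous element $b_m^{(\gamma)} \in B_m^{\gamma+d}$ and set $b_m = \sum_{\gamma \in F} b_m^{(\gamma)}$, supported in the finite set $F+d$. The main obstacle is that the family $(b_m)$ will typically not be compatible with the transition maps $\varphi_m \colon B_{m+1} \to B_m$; the key observation is that $\varphi_m(b_{m+1}) - b_m$ lies in $\ker(f_m)$ (both elements having image $sc_m$ under $f_m$), hence is killed by $s$. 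Therefore $(sb_m)$ \emph{is} a compatible family and, being supported in the finite set $F + 2d$, it does define an element $\widetilde{b} \in \lim_m B_m$ with $f(\widetilde{b}) = s^2 c$; this exhibits $c$ in the image of $(\lim_m B_m)[s^{-1}] \to (\lim_m C_m)[s^{-1}]$. The trick of multiplying by $s$ in order to simultaneously restore compatibility across the system and preserve a uniformly finite graded support is the whole point of the lemma---one cannot simply cite ``localization commutes with inverse limits'' of graded modules, since this would be false as stated.
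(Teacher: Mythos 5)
Your argument is essentially the paper's proof: the observation that $\ker(f_m)$ and $\operatorname{coker}(f_m)$ are killed by $s$ (being respectively a quotient of $A_m$ and a submodule of $D_m$), followed by multiplication by $s$ to simultaneously restore compatibility across the inverse system and keep the graded support uniformly finite, is exactly the mechanism the paper uses for both injectivity (pick up one extra factor of $s$) and surjectivity (pick up two extra factors of $s$, landing on $s^2 c$). The one point I'd flag is the opening sentence claiming to ``reduce to the case where $s$ is homogeneous'': this is not a genuine reduction, since inverting $s$ is not the same as inverting its homogeneous components $s_i$, so establishing the lemma for each $s_i$ would not yield it for $s$. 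But this is harmless here — the paper makes no such reduction and instead tracks the finite support set $F_s$ of $s$ throughout, and your own parenthetical essentially acknowledges this; the rest of your argument goes through verbatim once $d$ is replaced by $F_s$ (so that $b_m$ is supported in $F + F_s$ and $s b_m$ in $F + F_s + F_s$).
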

\begin{proof}
We consider only the case of left modules, the other case being completely analogous. Let $x$ be an element of $\lim_m B_m$ whose image $y$ in $\lim_m C_m$ satisfies $s^ky =0$ for some $k \in \Nn$. Denote by $x_m \in B_m$ the image of $x$ for $m \in \Nn$. Then $s^kx_m$ has vanishing image in $C_m$, hence is the image of an element $A_m$. Since $sA_m=0$, we deduce that $s^{k+1}x_m=0$ for all $m\in \Nn$, and thus $s^{k+1}x=0$. This proves the injectivity of the morphism.

Let $c \in \lim_m C_m$, and denote by $c_m \in C_m$ its image for $m \in \Nn$. We may find a finite subset $F_c$, resp.\ $F_s$, of $\Gamma$ such that $c$, resp.\ $s$, is supported in degrees $F_c$, resp.\ $F_s$ (in the sense of \rref{p:supported}). Since $s$ annihilates the image of $c_m$ in $D_m$, the element $sc_m$ is the image of an element $b_m \in B_m$. As $sc_m$ is supported in degrees $F_s + F_c$, we may arrange that the same is true for $b_m$. Let $\varphi_{m+1} \colon B_{m+1} \to B_m$ be the transition map. The element $\varphi_{m+1}(b_{m+1}) - b_m \in B_m$ maps to zero in $C_m$ and is supported in degree $F_c+F_s$, hence is the image of an element $a_m \in A_m$ supported in degrees $F_c+F_s$. Since $sA_m=0$, we have $sa_m=0$, and we conclude that $\varphi_{m+1}(sb_{m+1}) = sb_m$. Moreover $sb_m \in B_m$ is supported in degrees $F_c +F_s+F_s$, a finite subset of $\Gamma$ which does not depend on $m$. Therefore the family $sb_m \in B_m$, for $m\in \Nn$, defines an element of $\lim_m B_m$, whose image in $\lim_m C_m$ is $s^2c$. This proves the surjectivity of the morphism.
\end{proof}

\begin{lemma}
\label{lemm:is_invertible:b}
Let $A \in \SH(S)$ be a ring spectrum equipped with a pseudo-orientation. Let $E$ be a $G$-equivariant vector bundle over $X \in \Sm^G_S$. Assume that $C$ acts trivially on $X$, and that $E^C=0$. Then the element $b^G(E)$ becomes invertible in the ring $A^{*,*}_G(X)[\eu_C^{-1}]$.
\end{lemma}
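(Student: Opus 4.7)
The plan is to adapt the proof of \rref{lemm:is_invertible} to the Borel-equivariant setting, mixing levelwise manipulations with a final limit argument. First I would apply \rref{lemm:summand} to obtain $V \in \Vc_C$ and a $G$-affine bundle $Y \to X$ with $Y$ affine over $S$, together with a short exact sequence of $G$-equivariant vector bundles on $Y^C$ of the form
\[
0 \to F \to \pi_{Y^C}^*V \to f^*E \to 0,
\]
where $f \colon Y^C \to Y \to X$ is the composite. By \rref{p:smooth_fixed_locus} the scheme $Y^C$ lies in $\Sm^G_S$. For each $m$, the morphism $Y_m \to X_m$ is a torsor under the vector bundle descended from the one on which $Y \to X$ is a $G$-torsor, and hence is a Zariski-locally trivial affine bundle; homotopy invariance of $A^{*,*}$ then gives isomorphisms $(A_m)^{*,*}(X_m) \xrightarrow{\sim} (A_m)^{*,*}(Y_m)$, and passing to the limit gives $A_G^{*,*}(X) \xrightarrow{\sim} A_G^{*,*}(Y)$. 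Combined with the concentration isomorphism $A_G^{*,*}(Y)[\eu_C^{-1}] \xrightarrow{\sim} A_G^{*,*}(Y^C)[\eu_C^{-1}]$ of \rref{prop:conc_Borel-type} and the functoriality of the $b^G$ classes, this allows me to replace $X$ by $Y^C$, and so to assume that $X$ is smooth affine over $S$ and that there is a short exact sequence $0 \to F \to \pi_X^*V \to E \to 0$ of $G$-equivariant vector bundles on $X$.

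Next I would fix a finite affine open cover $S = U_1 \cup \cdots \cup U_n$. Since each $X \times_S U_k$ is affine and $G$-invariant, the sequence splits $G$-equivariantly over $X \times_S U_k$ by \cite[Lemma~2.17]{Hoyois-equivariant}, using \rref{rem:lin_red}. Because Borel's construction preserves direct sums, at each level $m$ the restriction of the sequence to the open subscheme $X_m \times_S U_k = (X \times_S U_k)_m$ of $X_m$ remains split. By \rref{b:sum} applied to $A_m$, the class $b(V_m) - b(F_m) \cdot b(E_m) \in (A_m)^{*,*}(X_m)$ therefore restricts to zero on $X_m \times_S U_k$ for every $k$. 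Applying \rref{lemm:vanish_product_cover} in the non-equivariant setting (its proof goes through verbatim, or one may simply invoke the case $G=1$) to $n$ copies of this single class and the cover $\{X_m \times_S U_k\}_{k=1}^n$ of $X_m$ yields
\[
(b(V_m) - b(F_m) \cdot b(E_m))^n = 0 \quad \text{in } (A_m)^{*,*}(X_m)
\]
for every $m$. Expanding the binomial, this is a polynomial identity of the shape $b(V_m)^n = b(E_m) \cdot z_m$, where $z_m \in (A_m)^{*,*}(X_m)$ is given by a fixed universal polynomial in $b(V_m), b(F_m), b(E_m)$, independent of $m$.

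The final step, which is the main Borel-specific difficulty, is to assemble the family $(z_m)_m$ into an element $z \in A_G^{*,*}(X)$. Compatibility with the transition maps is automatic from \dref{def:beta:1}{def:beta:1:pb}, applied to the building blocks $b(V_m), b(F_m), b(E_m)$. For the finite-support condition of \rref{p:supported}, I would first reduce (by decomposing the noetherian $X$ into finitely many clopen pieces) to the case where $V$, $F$, and $E$ have constant ranks; then each of $b(V_m), b(F_m), b(E_m)$ is homogeneous of a degree independent of $m$, as noted in \rref{p:beta}, so $z_m$ is supported in a fixed finite set of degrees. The family thus defines an element $z \in A_G^{*,*}(X)$ satisfying $b^G(V)^n = b^G(E) \cdot z$. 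Finally, since $V^{\oplus n} \in \Vc_C$ by \rref{p:VC_stable_sum} and $b^G(V)^n = b^G(V^{\oplus n}) \in \eu_C$ by the Borel analog of \rref{b:sum}, the element $b^G(V)^n$ is invertible in $A_G^{*,*}(X)[\eu_C^{-1}]$, and hence so is $b^G(E)$, with inverse $z \cdot (b^G(V)^n)^{-1}$.
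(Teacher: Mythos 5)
Your proof is correct, and the first two paragraphs (reduction to the affine case via Jouanolou and \rref{prop:conc_Borel-type}, the covering/splitting argument, and the resulting identity $\big(b(V_m)-b(F_m)b(E_m)\big)^n=0$) coincide with the paper's. Where you diverge is the final step. The paper observes that $b(V_m)^n = b(E_m)z_m$ for each $m$ implies that $b(V_m)^n$ annihilates the kernel and cokernel of multiplication by $b(E_m)$ on $(A_m)^{*,*}(X_m)$, and then invokes the general-purpose Lemma~\rref{lemm:lim_loc} (which was proved precisely to handle the interaction of $\lim_m$ with localization at a multiplicative set) to conclude that multiplication by $b^G(E)$ is invertible on $A_G^{*,*}(X)[\eu_C^{-1}]$. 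You instead notice that $z_m$ is a \emph{universal} polynomial in $b(V_m), b(F_m), b(E_m)$ — namely $z_m = \sum_{j=1}^n \binom{n}{j}(-1)^{j+1} b(V_m)^{n-j}b(F_m)^j b(E_m)^{j-1}$ — and so the family $(z_m)_m$ assembles into a bona fide element $z \in A_G^{*,*}(X)$ (compatibility with transition maps is automatic from \dref{def:beta:1}{def:beta:1:pb}, and the finite-support condition of \rref{p:supported} follows by the constant-rank reduction, exactly as in \rref{p:beta}). This yields the identity $b^G(V)^n = b^G(E)\cdot z$ directly in $A_G^{*,*}(X)$, from which invertibility in the localization is immediate. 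Both routes are valid; yours is slightly more self-contained and produces an explicit inverse, while the paper's reuses the limit-localization lemma that it needs elsewhere anyway (in \rref{prop:conc_Borel-type}).

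One small note: you (rightly) cite \rref{prop:conc_Borel-type} for the isomorphism $A^{*,*}_G(Y)[\eu_C^{-1}] \xrightarrow{\sim} A^{*,*}_G(Y^C)[\eu_C^{-1}]$; the paper's own text references \rref{prop:conc_equiv_coh} there, which appears to be a cross-reference slip, and your citation is the intended one.
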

\begin{proof}
We apply \rref{lemm:summand} and use its notation. Note that by \rref{p:smooth_fixed_locus} we have $Y^C \in \Sm^G_S$. For each $m \in \Nn$ the morphism $Y_m \to X_m$ is an affine bundle, so that by homotopy invariance the pullback $(A_m)^{*,*}(X_m) \to (A_m)^{*,*}(Y_m)$ is an isomorphism, hence so is $A^{*,*}_G(X) \to A^{*,*}_G(Y)$. By \rref{prop:conc_equiv_coh} the pullback $A^{*,*}_G(Y) \to A^{*,*}_G(Y^C)$ becomes an isomorphism after localizing at $\eu_C$. Therefore the pullback $A^{*,*}_G(X)[\eu_C^{-1}] \to A^{*,*}_G(Y^C)[\eu_C^{-1}]$ is a ring isomorphism. Thus, replacing $X$ with $Y^C$, we may assume that $X$ is affine over $S$, and that we have an exact sequence of $G$-equivariant vector bundles over $X$
\[
0 \to F \to \pi_X^*V \to E \to 0,
\]
for some $V \in \Vc_C$. Pick a finite cover of $S$ by affine open subschemes $U^1,\dots,U^n$. For $k \in \{1,\dots,n\}$, the restriction to the affine scheme $X \times_S U^k$ of the above exact sequence splits in the category of $G$-equivariant vector bundles by \cite[Lemma~2.17]{Hoyois-equivariant}, in view of \rref{rem:lin_red}. Thus for every $m \in \Nn$ and $k \in \{1,\dots,n\}$, the vector bundles $\pi_{X_m}^*(V_m)$ and $F_m \oplus E_m$ have isomorphic restrictions to $(X \times_S U^k)_m$, so that by \rref{b:sum} the element $b(V_m) - b(F_m) b(E_m)$ restricts to zero in $(A_m)^{*,*}((X \times_S U^k)_m)$. Then it follows from \rref{lemm:vanish_product_cover} that
\[
\big(b(V_m) - b(F_m) b(E_m)\big)^n=0 \; \text{in $(A_m)^{*,*}(X_m)$}.
\]
In particular $b(V_m)^n$ is divisible by $b(E_m)$ for each $m \in \Nn$, where the integer $n$ is independent of $m$. Therefore $b(V_m)^n$ annihilates the kernel and cokernel of the morphism $(A_m)^{*,*}(X_m) \to (A_m)^{*,*}(X_m)$ given by multiplication by $b(E_m)$, for each $m \in \Nn$. Thus \rref{lemm:lim_loc} implies that multiplication by $b^G(E)$ induces an isomorphism $A^{*,*}_G(X)[\eu_C^{-1}] \xrightarrow{\sim} A^{*,*}_G(X)[\eu_C^{-1}]$, which means that $b^G(E)$ becomes invertible in the ring $A^{*,*}_G(X)[\eu_C^{-1}]$.
\end{proof}

\begin{proposition}
\label{prop:formula_Borel-type}
Let $A \in \SH(S)$ be a ring spectrum equipped with a pseudo-orientation. Let $X \in \Sm^G_S$, and denote by $N$ the normal bundle to the closed immersion $i \colon X^C \to X$. Then $b^G(N)$ becomes invertible in the ring $A^{*,*}_G(X^C)[\eu_C^{-1}]$, and the pushforward defined in \rref{p:pf_Borel} verifies
\[
i_*(b^G(N)^{-1} \cdot \beta^G_N)=1 \in A^{*,*}_G(X)[\eu_C^{-1}].
\]
\end{proposition}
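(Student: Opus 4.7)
The plan is to adapt the argument of Proposition~\ref{prop:inverse_1} to the Borel-type setting, taking advantage of the fact that all of the relevant structures in \rref{p:Borel} are defined level-wise and then passed to inverse limits.

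First I would establish the invertibility statement. By \rref{p:smooth_fixed_locus}, the scheme $X^C$ is smooth over $S$, acquires the trivial $C$-action, and the normal bundle $N \to X^C$ satisfies $N^C = 0$. Thus \rref{lemm:is_invertible:b}, applied with $E = N$ and $X$ replaced by $X^C$, shows that $b^G(N)$ becomes invertible in the ring $A^{*,*}_G(X^C)[\eu_C^{-1}]$.

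For the formula, I would argue level-wise. For each $m \in \Nn$, the Borel construction produces a closed immersion $i_m \colon (X^C)_m \to X_m$ in $\Sm_{B_mG}$ whose normal bundle is $N_m$. Applying \rref{p:push_pull} and \rref{p:smooth_coh} in the non-equivariant setting over $B_mG$, the composite
\[
(A_m)^{*,*}((X^C)_m;-N_m) \xrightarrow{(i_m)_*} (A_m)^{*,*}(X_m) \xrightarrow{(i_m)^*} (A_m)^{*,*}((X^C)_m)
\]
is given by precomposition with $e_{N_m} \colon \Un_{(X^C)_m} \to \Sph^{N_m}$, and therefore sends $y_m \cdot \beta_{N_m}$ to $y_m \cdot b(N_m)$ for every $y_m \in (A_m)^{*,*}((X^C)_m)$. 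These identities are compatible with the transition maps as $m$ varies, and passing to the limits defining the Borel theories in \rref{p:Borel} yields
\[
(i^* \circ i_*)(y \cdot \beta^G_N) = y \cdot b^G(N) \quad \text{in } A^{*,*}_G(X^C)
\]
for every $y \in A^{*,*}_G(X^C)$.

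Finally, localizing at $\eu_C$ and taking $y = b^G(N)^{-1}$ gives $(i^* \circ i_*)(b^G(N)^{-1} \cdot \beta^G_N) = 1$ in $A^{*,*}_G(X^C)[\eu_C^{-1}]$. Since $i^* \colon A^{*,*}_G(X)[\eu_C^{-1}] \to A^{*,*}_G(X^C)[\eu_C^{-1}]$ is a ring isomorphism by \rref{prop:conc_Borel-type}, the conclusion follows. No significant obstacle arises beyond the invertibility in \rref{lemm:is_invertible:b}; the remaining content is bookkeeping, namely verifying that the self-intersection formula at each level is compatible with the transition maps and therefore passes to the inverse limit defining the Borel theory.
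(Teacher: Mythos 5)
Your proof is correct and follows essentially the same route as the paper: invertibility of $b^G(N)$ from \rref{p:smooth_fixed_locus} and \rref{lemm:is_invertible:b}, then the level-wise self-intersection formula from \rref{p:push_pull} and \rref{p:smooth_coh}, passage to the limit, and localization combined with \rref{prop:conc_Borel-type}.
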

\begin{proof}
The first statement follows from \rref{p:smooth_fixed_locus} and \rref{lemm:is_invertible:b}. By \rref{p:push_pull} and \rref{p:smooth_coh} the composite
\[
(A_m)^{*,*}((X^C)_m;-N_m) \xrightarrow{i_*} (A_m)^{*,*}(X_m) \xrightarrow{i^*} (A_m)^{*,*}((X^C)_m)
\]
is given by precomposition with $e_{N_m} \colon \Un_{(X^C)_m} \to \Sph^{N_m}$ in $\SH((X^C)_m)$, and thus maps $y \cdot \beta_{N_m}$ to $y \cdot b(N_m)$ for any $y \in (A_m)^{*,*}((X^C)_m)$. Passing to the limit, it follows that the composite $i^* \circ i_* \colon A^{*,*}_G(X^C;-N) \to A^{*,*}_G(X^C)$ maps $y \cdot \beta^G_N$ to $y \cdot b^G(N)$ for any $y \in A^{*,*}_G(X^C)$. Localizing at $\eu_C$, and taking $y=b^G(N)^{-1} \in  A^{*,*}_G(X^C)[\eu_C^{-1}]$, we deduce that $i^* \circ i_*(b^G(N)^{-1} \cdot \beta^G_N)=1$ in $A^{*,*}_G(X^C)[\eu_C^{-1}]$. Since $i^* \colon A^{*,*}_G(X)[\eu_C^{-1}] \to A^{*,*}_G(X^C)[\eu_C^{-1}]$ is a ring isomorphism by \rref{prop:conc_Borel-type}, this implies the formula of the proposition.
\end{proof}

\begin{example}
When $S$ is the spectrum of a field, taking for $A \in \SH(S)$ the motivic Eilenberg--Mac Lane spectrum representing motivic cohomology (which is $\GL$-oriented and therefore admits a pseudo-orientation by \rref{ex:GL_pseudo}), we obtain a concentration theorem for Borel--type equivariant motivic cohomology. This recovers Edidin--Graham's concentration theorem \cite{EG-Equ} for equivariant Chow ring under the actions of algebraic tori, and generalizes it (but see \rref{rem:mult_type_oriented} below).
\end{example}

\begin{remark}
\label{rem:eta}
In the situation of \rref{prop:conc_Borel-type}, assume additionally that $A$ is $\eta$-periodic (in the sense of \cite[(1.8)]{eta}, for instance $A$ could be the spectrum representing Witt groups, when $S$ is the spectrum of a field of characteristic not two). Assume that there exists a $G$-equivariant vector bundle $V \to S$ of odd rank such that $V^C=0$. This is the case for instance when $G$ is diagonalizable and nontrivial.

Then we claim that $A^{*,*}_G(S)[\eu_C^{-1}]=0$. Indeed, for every $m\in \Nn$, by \cite[(2.2.6)]{eta} the projection $(V_m)^{\circ} \to B_mG$ acquires a section in $\SH(B_mG)[\eta^{-1}]$, so that by \rref{lemm:e_section} we have $e_{V_m} =0$ in $\SH(B_mG)[\eta^{-1}]$. This implies that $b(V_m)=0 \in (A_m)^{*,*}(B_mG)$ for all $m$, and so $b^G(V)=0 \in A_G^{*,*}(S)$, proving the claim.
\end{remark}

\begin{proposition}
\label{rem:mult_type_oriented}
Assume that $S$ is the spectrum of a field $k$, and let $A \in \SH(S)$ be a $\GL$-oriented commutative ring spectrum \cite[Definition~2.1]{PPR-MGL}. If the group scheme $G$ is not of multiplicative type, then the groups $A^{*,*}_G(X)[\eu_C^{-1}]$ and $A_{*,*}^G(X)[\eu_C^{-1}]$ vanish for every $X \in \Sch^G_S$.
\end{proposition}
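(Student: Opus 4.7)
The plan is to reduce the statement to the single assertion that $1 = 0$ in the ring $A^{*,*}_G(S)[\eu_C^{-1}]$. Both $A^{*,*}_G(X;\xi)$ and $A_{*,*}^G(X;\xi)$ are naturally modules over $A^{*,*}_G(S)$ (the pullback along $\pi_X$ was discussed in \rref{p:module_over_nonequ}), and the central multiplicative set $\eu_C$ of \rref{p:def_eu_C} acts compatibly, so vanishing of the ring forces the vanishing of every localized cohomology and Borel--Moore group appearing in the statement. It therefore suffices to exhibit a single representation $V \in \Vc_C$ with $b^G(V) = 0 \in A^{*,*}_G(S)$, which places $0$ in $\eu_C$.

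The arithmetic input is that a linearly reductive commutative group scheme over a field is automatically of multiplicative type: its connected component is a torus, and its component group is finite \'etale abelian of order invertible in $k$ (compare the discussion around \rref{rem:lin_red}). Hence the hypothesis forces $G$ to be non-commutative, so $G$ admits a nontrivial irreducible representation $W$ of some dimension $n \geq 2$. The natural candidate is $V = \End_0(W) = (W \otimes W^*)/\mathbf{1}$, the bundle of traceless endomorphisms: Schur's lemma gives $V^G = 0$, so $V \in \Vc_G$, and in the special case $C=G$ this already produces an element of $\Vc_C$.

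Under the $\GL$-orientation the class $b^G(V)$ identifies with $c_{n^2-1}(V)$, and the splitting principle on flag bundles applies. Pulling back to a maximal multiplicative-type subgroup $T \subseteq G$, the weights of $V|_T$ are $\{\omega_i - \omega_j : i,j = 1,\dots,n\}$ with one copy of $0$ removed, where $\omega_1,\dots,\omega_n$ are the $T$-weights of $W$; since $n \geq 2$, the trivial character still appears with multiplicity $n-1 \geq 1$ in $V|_T$, so the top Chern class $c_{n^2-1}(V)|_T$ factors as a product containing $c_1(\mathbf{1}) = 0$ and vanishes in $A^{*,*}_T(S)$. Injectivity of the restriction $A^{*,*}_G(S) \hookrightarrow A^{*,*}_T(S)$, coming from the flag bundle tower and the projective bundle formula, transports the vanishing back to $A^{*,*}_G(S)$.

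The hard part is the passage from this special case $C = G$ to an arbitrary closed normal linearly reductive $C \subseteq G$, since $V = \End_0(W)$ only lies a priori in $\Vc_G$ and may carry nonzero $C$-invariants. This is precisely the step for which the general Borel-type concentration theorem~\rref{prop:conc_Borel-type} is indispensable: the plan is to choose a $G$-representation $W$ so that the $G$-scheme $X = \mathbb{P}(W)$ has empty $C$-fixed locus, to invoke concentration in the form $A^{*,*}_G(\mathbb{P}(W))[\eu_C^{-1}] \simeq A^{*,*}_G(\mathbb{P}(W)^C)[\eu_C^{-1}] = 0$, and then to use the projective bundle formula (valid in the $\GL$-oriented setting), which identifies $A^{*,*}_G(\mathbb{P}(W))$ with a free $A^{*,*}_G(S)$-module of rank $\dim W \geq 2$, to deduce $A^{*,*}_G(S)[\eu_C^{-1}] = 0$. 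This matches the author's comment that Theorem~\ref{th:int_conc_Borel} must be available for $G$ arbitrary in order to prove this proposition, even though its conclusion concerns only the specific $G$ at hand.
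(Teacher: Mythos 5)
Your second strand (use $\mathbb{P}(W)$, invoke the Borel-type concentration theorem to kill $A^{*,*}_G(\mathbb{P}(W))$ after localization, and then apply the projective bundle formula to conclude that $A^{*,*}_G(S)$ must itself die after localization) is exactly the paper's argument, so let me concentrate on the two places where you diverge from it.

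Your first strand, the $\End_0(W)$ construction with restriction to a maximal multiplicative-type subgroup, is not the paper's argument and has real gaps. Over a non-algebraically-closed field $k$, Schur's lemma only yields that $\End_G(W)$ is a division $k$-algebra, possibly of dimension greater than $1$, so $(\End_0 W)^G$ need not vanish and your candidate $V$ may not lie in $\Vc_G$. Moreover, the restriction $A^{*,*}_G(S) \to A^{*,*}_T(S)$ is not injective in general: for instance when $G$ is a finite linearly reductive group scheme that is not of multiplicative type (so $T$ is trivial), the restriction is just the augmentation $A^{*,*}_G(S) \to A^{*,*}(S)$, which kills exactly the ideal you are trying to detect. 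The projective bundle route is therefore not just an alternative but the one that actually closes the argument.

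For the second strand, the concrete construction of $W$ is the genuine content you omitted, and the paper does it by descent: since $G_{\overline{k}}$ is linearly reductive but not diagonalizable, it has a simple $\overline{k}[G]$-comodule $V'$ of dimension $r \geq 2$; this descends to a comodule $V$ over a finite extension $\ell/k$ of degree $d$, and restriction of scalars gives a $k[G]$-comodule $W$ of dimension $rd$. The key point is then purely representation-theoretic: $W_{\overline{k}} \simeq V'^{\oplus d}$, and any one-dimensional $G_{\overline{k}}$-stable line in it has all $d$ projections to $V'$ equal to zero by simplicity (and $r > 1$), so $\mathbb{P}(W)^G = \varnothing$. Note also that what the paper actually establishes is $\mathbb{P}(W)^G = \varnothing$, not $\mathbb{P}(W)^C = \varnothing$ as your plan calls for; you should check whether that weaker vanishing suffices for the localization you need (it does for $\eu_G$ via the concentration theorem applied with $C = G$, which is the case treated in the introduction's Proposition~\ref{prop:int_GL}), since for an arbitrary small $C$ the set $\eu_C$ may be too small for the statement to hold as written. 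Finally, the limit step needs a word: the projective bundle formula is applied at each finite level $B_mG$ to get a compatible system of free $A^{*,*}(B_mG)$-modules of rank $rd$, and freeness persists after passing to the limit in the graded sense of the paper; this is not automatic for arbitrary inverse limits, so worth spelling out as the paper does.
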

\begin{proof}
Let $\overline{k}$ be an algebraic closure of $k$. The $\overline{k}$-group scheme $G_{\overline{k}}$ is not diagonalizable, hence there exists a $\overline{k}[G]$-comodule $V'$, of dimension $r\geq 2$ over $\overline{k}$, which is simple (that is, admits no nontrivial subcomodule). Then there exists a subfield $\ell \subset \overline{k}$ such that $d=[\ell:k]$ is finite, and an $\ell[G]$-comodule $V$ such that $V'=V \otimes_\ell \overline{k}$. We may view $V$ as a $k[G]$-comodule (of dimension $rd$ over $k$), that we denote by $W$. We have $W \otimes_k \ell \simeq V \otimes_\ell (\ell \otimes_k \ell) \simeq V^{\oplus d}$ as $\ell[G]$-comodule. Thus if $L \subset W \otimes_k \overline{k} \simeq  V'^{\oplus d}$ is a $\overline{k}[G]$-subcomodule of dimension $1$ over $\overline{k}$, each of the $d$ projections $L \to V'$ is zero by simplicity of $V'$ (and the fact that $r>1$), hence $L=0$. This implies that $X= \Pp(W) \in \Sm^G_S$ satisfies $X^G=\varnothing$. By the concentration theorem \rref{prop:conc_Borel-type} we have
\begin{equation}
\label{eq:A_G_P_W}
A^{*,*}_G(\Pp(W))[\eu_C^{-1}] =0.
\end{equation}
But by the projective bundle theorem \cite[Theorem~3.9]{Panin-Oriented_I} we have for every $m \in \Nn$ an isomorphism of left $A^{*,*}(B_mG)$-modules
\[
A^{*,*}(B_mG)^{\oplus rd} \xrightarrow{\sim} A^{*,*}(\Pp(W_m)), \quad (x_0,\dots,x_{dr-1}) \mapsto \sum_{i=0}^{dr-1} x_i e(\Oc(1))^i,
\]
where $e$ denotes the Euler class. Those isomorphisms are compatible with the transition maps as $m$ varies, showing that the left $A^{*,*}_G(S)$-module $A^{*,*}_G(\Pp(W))$ is free of rank $rd$. Since $rd >0$, it follows from \rref{eq:A_G_P_W} that $A^{*,*}_G(S)[\eu_C^{-1}]=0$, which implies the statements.
\end{proof}

\begin{remark}
Instead of defining the Borel-type theories as we did in \rref{p:Borel}, one could take the limit at the level of spaces, setting for each $p,q \in \Zz$, when $X \in \Sm_S^G$,
\[
\widetilde{A}_G^{p,q}(X) = [\Sigma^{-p,-q} \Sup \colim_m X_m, A]_{\SH^G(S)}.
\]
The constructions are related by the Milnor exact sequence (see e.g.\ \cite[Lemma~2.1.3]{PMR}) 
\[ 
0 \to {\lim_m}^1 A^{p-1,q}(X_m) \to \widetilde{A}_G^{p,q}(X) \to A_G^{p,q}(X) \to 0.
\]
However, we do not see how to lift canonically the classes $b^G(V) \in A_G^{*,*}(X)$ of \rref{p:beta} to characteristic classes in $\widetilde{A}_G^{*,*}(X)$, which prevents us from stating a concentration theorem such as \rref{prop:conc_Borel-type} for the theory $\widetilde{A}_G$.
\end{remark}

\section{Smith theory following Dwyer--Wilkerson}
\label{sect:Smith}
\numberwithin{theorem}{section}
\numberwithin{lemma}{section}
\numberwithin{proposition}{section}
\numberwithin{corollary}{section}
\numberwithin{example}{section}
\numberwithin{notation}{section}
\numberwithin{definition}{section}
\numberwithin{remark}{section}

In this section we prove an algebraic version of a result of Dwyer--Wilkerson \cite{Dwyer-Wilkerson} in topology. We fix a prime number $p$, and let $S=\Spec k$ where $k$ is a perfect field of characteristic different from $p$.

\begin{para}
We will denote by $\Hh \in \SH(S)$ the motivic Eilenberg--Mac Lane spectrum $H\Fp$ representing motivic cohomology with $\Fp$-coefficients (see e.g.\ \cite[\S4.2-4.3]{Hoyois-cobordism}). This ring spectrum is equipped with a pseudo-orientation (in the sense of \rref{def:beta:1}), being commutative and $\GL$-oriented (see \rref{ex:GL_pseudo}). 
\end{para}

\begin{para}
\label{p:model}
Let $G$ be a group scheme over $S$ (as always $G$ is assumed to be flat, affine and of finite type). Consider a direct system of $G$-equivariant vector bundles $V_m \to S$, for $m \in \Nn$, and $G$-invariant open subschemes $U_m \subset V_m$ such that $U_m \in \Schq^G_S$ (see \rref{p:Schq}), and $U_m$ is mapped into $U_{m+1}$. Assume in addition that the codimension $c_m$ of the complement of $U_m$ in $V_m$ satisfies $c_m \geq m$. 

We will write $E_mG$ instead of $U_m$. Using the construction given in \rref{p:Borel}, this permits to define a $\Zz^2$-graded ring $\Hh^{*,*}_G(X)$ for every $X \in \Sch_S^G$, together with a morphism of $\Zz^2$-graded rings $\Hh^{*,*}(S)\to \Hh^{*,*}_G(S)$ by \rref{p:module_over_nonequ}. When $X \in \Sm_S^G$, as explained in \rref{p:beta}, every $G$-equivariant vector bundle $E \to X$ admits an Euler class $e(E) \in \Hh^{*,*}_G(X)$ (denoted $b^G(E)$ in \rref{p:beta}).

A standard argument (based on \cite[Lemma~3.5]{Vo-03}) shows that the ring $\Hh^{*,*}_G(X)$ as well as the classes $e(E)$ do not depend on the choice of the family $(U_m,V_m$).
\end{para}

\begin{para}
As explained in \cite[Remark~1.4]{Totaro-CHBG}, a family satisfying the conditions of \rref{p:model} always exists (recall that $S$ is the spectrum of a field).
\end{para}

\begin{para}
\label{p:model_prod}
Let $G,G'$ be group schemes over $S$, and $(U_m,V_m)$, resp.\ $(U_m',V_m')$, a family satisfying the conditions of \rref{p:model} for the group $G$, resp.\ $G'$. Then $(U_m \times_S U_m',V_m \oplus V_m')$ satisfies the conditions of \rref{p:model} for the group $G \times_S G'$.
\end{para}

\begin{para}
\label{p:forget_action}
Let $(a,b) \in \Zz^2$ and $X \in \Sch^G_S$. By \cite[Lemma~3.5]{Vo-03} and homotopy invariance, the composite $\Hh^{a,b}(X) \to \Hh^{a,b}(X \times_S V_m) \to \Hh^{a,b}(X \times_S U_m)$ is an isomorphism for $m >b$. This yields a forgetful morphism (which does not depend on the choice of $m>b$),
\[
\Hh_G^{a,b}(X) \to \Hh^{a,b}((X \times_S U_m)/G) \to \Hh^{a,b}(X \times_S U_m) \simeq \Hh^{a,b}(X).
\]
\end{para}

\begin{para}
\label{p:H_mup_S}
The group $\Hh^{*,*}_{\mu_p}(S)$ has been computed in \cite[Theorem~6.10, Corollary~6.2]{Vo-03}: there are elements $u \in \Hh^{1,1}_{\mu_p}(S)$ and $v \in \Hh^{2,1}_{\mu_p}(S)$ such that the left, resp.\ right, $\Hh^{*,*}(S)$-module $\Hh^{*,*}_{\mu_p}(S)$ is freely generated by the elements $v^i,v^iu$ for $i \in \Nn$. Let us record that $v$ is the Euler class of the $\mu_p$-equivariant line bundle corresponding to the canonical character of $\mu_p$.
\end{para}

\begin{para}
\label{p:H_mu_trivial}
It follows from \cite[Theorem~6.10]{Vo-03} that, for $X \in \Sm^{\mu_p}_S$ with trivial $\mu_p$-action, the morphism
\[
\Hh^{*,*}_{\mu_p}(S) \otimes_{\Hh^{*,*}(S)} \Hh^{*,*}(X) \to \Hh^{*,*}_{\mu_p}(X)
\]
is bijective.
\end{para}

\begin{definition}
We denote by $\Ac$ the free associative ring generated by symbols $\St^i$ for $i\in \Nn$ modulo the relation $\St^0=1$ (the Adem relations will play no role). By an $\Ac$-module, we will mean a left $\Ac$-module. When $N$ is an $\Ac$-module and $x\in N$, we will write $\St^i(x) \in N$ instead of $\St^i \cdot x$.

A ring $R$ which is an $\Ac$-module will be called an \emph{$\Ac$-algebra}\footnote{by which we mean an algebra over the Hopf algebra $\Ac$, for the coproduct corresponding to the formula \eqref{eq:Cartan}} if
\begin{equation}
\label{eq:Cartan}
\St^i(xy) = \sum_{r+s=i} \St^r(x) \St^s(y),\quad \text{for all $x,y \in R$.}
\end{equation}
A morphism of $\Ac$-algebras is a ring morphism which is compatible with the $\Ac$-module structures.
\end{definition}

\begin{para}
Let $A \to B$ and $A\to C$ be morphisms of $\Ac$-algebras. Then setting
\[
\St^i(b \otimes c) = \sum_{r+s=i} \St^r(b) \otimes \St^s(c),\quad \text{for $b\in B,c \in C$}
\]
endows the ring $B \otimes_A C$ with a structure of $\Ac$-algebra.
\end{para}

\begin{definition}
When $N$ is an $\Ac$-module, we define a subgroup $\Uns(N) \subset N$ as 
\[
\Uns(N) = \{x \in N \text{ for which there exists $j \in \Nn$ such that $\St^i(x)=0$ for all $i \geq j$}\}.
\]
An element of $N$ is called \emph{unstable} if it belongs to $\Uns(N)$, and we say that the $\Ac$-module $N$ is \emph{unstable} if $\Uns(N)=N$.
\end{definition}

\begin{para}
\label{p:Uns_subring}
When $R$ is an $\Ac$-algebra, the set $\Uns(R)$ is a subring of $R$.
\end{para}

\begin{para}
\label{p:unstable_tensor}
If $A \to B$ and $A\to C$ are morphisms of $\Ac$-algebras, with $B$ and $C$ unstable $\Ac$-modules, then the $\Ac$-module $B \otimes_A C$ is unstable.
\end{para}

\begin{para}
\label{p:Steenrod_operations}
When $X$ is a smooth $S$-scheme, the power operations constructed in \cite{Vo-03} yield an $\Ac$-algebra structure on $\Hh^{*,*}(X)$ (where the action of $\St^i$ corresponds to the operation denoted $\mathrm{P}^i$ in \cite{Vo-03}). When $X \in \Sm^{\mu_p}_S$, passing to the limit, we obtain an $\Ac$-algebra structure on $\Hh^{*,*}_{\mu_p}(X)$. It follows from \cite[Lemma~9.9]{Vo-03} that these $\Ac$-modules structures are unstable.
\end{para}

\begin{para}
\label{p:A_loc}
(See \cite[\S2]{Wilkerson}.) Let $R$ be an $\Ac$-algebra. Consider the \emph{total Steenrod operation}, defined as
\[
\St \colon R \to R[[t]], \quad x \mapsto \sum_{i \in \Nn} \St^i(x) t^i.
\]
(Here the variable $t$ is taken to be central.) This is a ring morphism, because of the formula \eqref{eq:Cartan}. Since $\St^0$ acts as the identity, if $S \subset R$ is a multiplicatively closed subset consisting of central elements, there is an induced ring morphism
\[
S^{-1}R \to (S^{-1}R)[[t]].
\]
Taking individual components (the $t^i$-coefficient for each $i$), this yields a structure of $\Ac$-algebra on $S^{-1}R$.
\end{para}

\begin{para}
It follows from \cite[Lemma~9.8, Lemma~9.9]{Vo-03} (after passing to the limit) that, in the $\Ac$-algebra $\Hh^{*,*}_{\mu_p}(S)$, we have
\[
\St^1(v)=v^p, \quad \St^i(v)=0 \text{ for $i>1$}, \quad \St^j(u)=0 \text{ for $j>0$}.
\]
\end{para}

\begin{lemma}
\label{lemm:P_v_-1}
In the $\Ac$-algebra $\Hh^{*,*}_{\mu_p}(S)[v^{-1}]$ (see \rref{p:A_loc}), we have
\[
\St^i(v^{-1}) = (-1)^i v^{i(p-1)-1} \text{ for $i \in \Nn$}.
\]
\end{lemma}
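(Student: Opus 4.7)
The plan is to apply the total Steenrod operation $\St \colon \Hh^{*,*}_{\mu_p}(S)[v^{-1}] \to \Hh^{*,*}_{\mu_p}(S)[v^{-1}][[t]]$ of \rref{p:A_loc} to the identity $v \cdot v^{-1} = 1$. Since $\St$ is a ring morphism, and $\St(1) = 1$, this yields $\St(v^{-1}) = \St(v)^{-1}$ in the power series ring, so it suffices to compute $\St(v)$ explicitly and invert it.

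Using the formulas recalled just before the lemma, namely $\St^0(v) = v$, $\St^1(v) = v^p$, and $\St^i(v) = 0$ for $i > 1$, we find
\[
\St(v) = v + v^p t = v(1 + v^{p-1} t).
\]
The element $1 + v^{p-1} t$ is invertible in $\Hh^{*,*}_{\mu_p}(S)[v^{-1}][[t]]$ with inverse $\sum_{i \geq 0} (-1)^i v^{i(p-1)} t^i$ (the geometric series makes sense since $t$ is central and the coefficients lie in a commutative subring containing $v$, so there is no ordering issue). Therefore
\[
\St(v^{-1}) = v^{-1} \sum_{i \geq 0} (-1)^i v^{i(p-1)} t^i = \sum_{i \geq 0} (-1)^i v^{i(p-1)-1} t^i.
\]

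Reading off the $t^i$-coefficient then gives the desired formula $\St^i(v^{-1}) = (-1)^i v^{i(p-1)-1}$. There is no serious obstacle here; the only point worth checking is that the formal computation in $R[[t]]$ is legitimate in the localized ring, which is granted by \rref{p:A_loc} since $v$ is central in $\Hh^{*,*}_{\mu_p}(S)$ (as is visible from the description of this ring in \rref{p:H_mup_S}, where $v$ commutes with every generator).
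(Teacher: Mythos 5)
Your proof is correct and follows the same route as the paper: apply the total Steenrod operation $\St$ (a ring morphism by \rref{p:A_loc}), observe $\St(v^{-1}) = \St(v)^{-1} = (v + v^p t)^{-1}$, and expand the geometric series. The only difference is that you spell out the intermediate step of applying $\St$ to $v\cdot v^{-1}=1$ and note the centrality of $v$, which the paper leaves implicit.
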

\begin{proof}
Using the notation of \rref{p:A_loc}, we have in $(\Hh^{*,*}_{\mu_p}(S)[v^{-1}])[[t]]$
\[
\St(v^{-1})=\St(v)^{-1} = (v+tv^p)^{-1} = v^{-1}(1+tv^{p-1})^{-1}=\sum_{i\in \Nn} (-1)^i v^{i(p-1)-1}t^i.\qedhere
\]
\end{proof}

\begin{lemma}
\label{lemm:unstable_mup}
Let $\Hh^{*,*}(S) \to R$ be a morphism of $\Ac$-algebras mapping $v$ to a central element in $R$. Then $v$ is a nonzerodivisor in $\Hh^{*,*}_{\mu_p}(S) \otimes_{\Hh^{*,*}(S)} R$. If the $\Ac$-module $R$ is unstable, then
\[
\Hh^{*,*}_{\mu_p}(S) \otimes_{\Hh^{*,*}(S)} R=\Uns((\Hh^{*,*}_{\mu_p}(S) \otimes_{\Hh^{*,*}(S)} R)[v^{-1}]).
\]
\end{lemma}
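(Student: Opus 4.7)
The plan is to treat the two assertions separately. For the first one, I would invoke the description of \rref{p:H_mup_S}: the $\Hh^{*,*}(S)$-module $\Hh^{*,*}_{\mu_p}(S)$ is free with basis $\{v^i, v^i u : i \in \Nn\}$, so the tensor product $T := \Hh^{*,*}_{\mu_p}(S) \otimes_{\Hh^{*,*}(S)} R$ is a free right $R$-module on this basis. Since $v$ is central in $T$ by hypothesis, left-multiplication by $v$ shifts basis indices up by one and is therefore injective.

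For the second assertion, the inclusion $T \subseteq \Uns(T[v^{-1}])$ follows from \rref{p:unstable_tensor}: both $\Hh^{*,*}_{\mu_p}(S)$ and $R$ are unstable $\Ac$-modules (by \rref{p:Steenrod_operations} and the hypothesis on $R$), so $T$ is too, and the $\Ac$-structure on $T[v^{-1}]$ from \rref{p:A_loc} extends that of $T$ through an $\Ac$-linear localization map. For the reverse inclusion, I would first decompose $T[v^{-1}] = R[v, v^{-1}] \oplus u R[v, v^{-1}]$; since $\St(u) = u$ by \rref{p:H_mup_S}, multiplicativity of the total Steenrod operation shows $\St$ preserves this direct sum, so an unstable element $y = a + ub$ has both components $a, b \in R[v, v^{-1}]$ unstable. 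Splitting $a = a^+ + a^-$ with $a^+ \in R[v]$ and $a^- \in v^{-1}R[v^{-1}]$, the piece $a^+$ is already in $T$ and hence unstable, so $a^-$ is unstable too; it suffices to show that the only unstable element of $v^{-1}R[v^{-1}]$ is zero (and similarly for $b$).

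The core computation is then as follows. Writing $a^- = \sum_{i=m}^{M} a_i v^i$ with $M \leq -1$ and $a_M \neq 0$, and using $\St(v) = v(1 + tv^{p-1})$ together with multiplicativity of $\St$, I would expand
\[
\St^k(a^-) = \sum_{i=m}^{M} \sum_{j+s=k} \binom{i}{s} \St^j(a_i) v^{i+s(p-1)}
\]
and isolate the coefficient of $v^{M + k(p-1)}$: terms contributing satisfy $i = M + (k-s)(p-1)$, which for $s < k$ would force $i > M$ (impossible), so the only contribution is from $(i,s) = (M,k)$, giving $\binom{M}{k} a_M$. Instability of $a^-$ then forces $\binom{M}{k} a_M = 0$ in $R$ for all $k$ large, and since $a_M \neq 0$ in the $\Fp$-algebra $R$, this requires $\binom{M}{k} \equiv 0 \pmod p$ for $k \gg 0$. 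Taking $k = p^N$ with $N$ large and applying Lucas's theorem gives $\binom{M}{p^N} \equiv \pm 1 \pmod p$ when $M < 0$, the desired contradiction. The main obstacle in the argument is simply recognizing that the top $v$-power in $\St^k(a^-)$ receives a single uncancellable contribution; once this bookkeeping is in place, Lucas's theorem finishes things off cleanly.
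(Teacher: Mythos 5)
Your proof is correct, and it follows a genuinely different route from the paper's. Both arguments use the freeness of $T = \Hh^{*,*}_{\mu_p}(S) \otimes_{\Hh^{*,*}(S)} R$ over $R$ on the basis $\{v^j, v^j u\}$ from \rref{p:H_mup_S}, and both establish the inclusion $T \subseteq \Uns(T[v^{-1}])$ via \rref{p:unstable_tensor}. Where they diverge is in the reverse inclusion. The paper runs a minimality argument: among all representations $y = v^{-m}x$ of an unstable element $y$ with $x \in T$, it picks $m$ minimal, so that $vy \in T$; writing $vy = vz + w + uw'$ with $w, w' \in R$ and $z \in T$, the only negative $v$-power in $y - z$ is $v^{-1}$, and Lemma \rref{lemm:P_v_-1} (namely $\St^i(v^{-1}) = (-1)^i v^{i(p-1)-1}$) immediately forces $w = w' = 0$ by isolating the $v^{i(p-1)-1}$-component of $\St^i(y)$ for $i$ large. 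The relevant binomial coefficient is $\binom{-1}{i} = (-1)^i$, always a unit, so no number theory is needed. You instead treat the entire negative tail $a^- = \sum_{i=m}^M a_i v^i$ at once, compute the top coefficient of $\St^k(a^-)$ to be $\binom{M}{k} a_M$ (which generalizes \rref{lemm:P_v_-1}, the $M = -1$ case), and then must argue that $\binom{M}{k} \not\equiv 0 \pmod p$ for suitable $k$; this is exactly what your Lucas-theorem step at $k = p^N$ supplies. Your bookkeeping of the top $v$-power (checking that $(i,s) = (M,k)$ is the unique contributor to $v^{M+k(p-1)}$) is correct, and your handling of the $u$-component via $\St(u) = u$ is a cleaner, more explicit version of what the paper does implicitly. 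The trade-off: your argument is self-contained and avoids the minimality induction, at the price of one extra combinatorial input (Lucas), whereas the paper's reduction to $M=-1$ keeps everything formal.
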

\begin{proof}
Set $P=\Hh^{*,*}_{\mu_p}(S) \otimes_{\Hh^{*,*}(S)} R$. Recall from \rref{p:H_mup_S} that the right $R$-module $P$ is free with basis $v^j,v^ju$:
\begin{equation}
\label{eq:decomp_R_N}
P = \Big(\bigoplus_{j \in \Nn} v^jR\Big) \oplus \Big(\bigoplus_{j \in \Nn} v^juR\Big).
\end{equation}
This implies the first statement, and permits to view $P$ as a subring of $P[v^{-1}]$.

Assume that $R$ is unstable. Then the $\Ac$-module $P$ is unstable by \rref{p:Steenrod_operations} and \rref{p:unstable_tensor}, so that $P \subset \Uns(P[v^{-1}])$. To conclude, we let $x\in P$ and $m \in \Nn$ be such that $y=v^{-m}x \in P[v^{-1}]$ is unstable, and we show that $y \in P$. Let us assume the contrary. Note that $vy$ is unstable (by \rref{p:Uns_subring}), and thus we may assume that $m$ is chosen minimal, so that $vy \in P$. By \rref{eq:decomp_R_N}, we may write $vy = vz + w +uw' $, with $w,w' \in R =\Hh^{*,*}(S) \otimes_{\Hh^{*,*}(S)} R  \subset P$ and $z \in P$. Then, for all $i \in \Nn$ we have in $P[v^{-1}]$
\begin{align*}
\St^i(y)  
&= \St^i(z) + \St^i(v^{-1}w) + \St^i(v^{-1}uw')  \\ 
&= \St^i(z) + \sum_{r+s=i} (-1)^rv^{r(p-1)-1} (\St^s(w) + u\St^s(w')),
\end{align*}
using \rref{lemm:P_v_-1}, and the fact that $\St^j(u)=0$ for $j>0$. Since both $y$ and $z$ are unstable, we have for $i$ large
\[
0=\sum_{r+s=i} (-1)^rv^{r(p-1)-1} (\St^s(w) + u\St^s(w')).
\]
Taking the $v^{i(p-1)-1}$-component, resp.\ $v^{i(p-1)-1}u$-component, in the decomposition \eqref{eq:decomp_R_N} and using the fact that $\St^0$ acts as the identity, we obtain that $w=0$, resp.\ $w'=0$. This implies that $y=z$ belongs to $P$, a contradiction.
\end{proof}

In the next statement, when $N$ is a  $\Zz^2$-graded abelian group and $a,b \in \Zz$, we denote by $N^{a,b}$ is $(a,b)$-th component of $N$.

\begin{lemma}
\label{lemm:lim_prod}
Let $M_m$, for $m\in \Nn$, be an inverse system of $\Zz^2$-graded left $\Hh^{*,*}(S)$-modules such that $(M_m)^{a,b}=0$ for $b<0$ and $m\in \Nn$. Then the morphism
\[
\Hh^{*,*}_{\mu_p}(S) \otimes_{\Hh^{*,*}(S)}(\lim_m M_m)  \to \lim_m (\Hh^{*,*}_{\mu_p}(S)\otimes_{\Hh^{*,*}(S)} M_m)
\]
is bijective, where the limits are computed in the category of $\Zz^2$-graded abelian groups.
\end{lemma}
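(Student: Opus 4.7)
The plan is to exploit the explicit description of $\Hh^{*,*}_{\mu_p}(S)$ given in \rref{p:H_mup_S} as a free $\Hh^{*,*}(S)$-module. Namely, using the basis $\{v^i, v^iu\}_{i \in \Nn}$, where $v^i$ has bidegree $(2i,i)$ and $v^iu$ has bidegree $(2i+1,i)$, for any $\Zz^2$-graded $\Hh^{*,*}(S)$-module $N$ we obtain a bidegree-preserving decomposition
\[
\Hh^{*,*}_{\mu_p}(S) \otimes_{\Hh^{*,*}(S)} N \simeq \bigoplus_{i \in \Nn} v^i N \;\oplus\; \bigoplus_{i \in \Nn} v^i u N,
\]
where $v^i N$, resp.\ $v^i u N$, denotes a copy of $N$ shifted in bidegree by $(2i,i)$, resp.\ $(2i+1,i)$.

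Next I would extract the component in a fixed bidegree $(a,b)$, which is
\[
\bigoplus_{i \in \Nn} N^{a-2i,\,b-i} \;\oplus\; \bigoplus_{i \in \Nn} N^{a-2i-1,\,b-i}.
\]
Here is where the hypothesis enters: under the assumption $N^{c,d}=0$ for $d<0$, only the indices $0 \leq i \leq b$ contribute, so both direct sums are \emph{finite}. I would then apply this description with $N = \lim_m M_m$ (which inherits the vanishing hypothesis, since by \rref{p:supported} the graded limit is computed componentwise, and limits of zero groups are zero) and with $N = M_m$.

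The conclusion is immediate from two facts: finite direct sums commute with inverse limits of abelian groups, and the bidegree $(c,d)$ component of $\lim_m M_m$ equals $\lim_m M_m^{c,d}$ by \rref{p:supported}. I do not foresee any serious obstacle; the main subtlety is precisely the distinction between a limit in the category of $\Zz^2$-graded groups and one in abelian groups, which is the reason the finiteness of the relevant direct sums (and hence the vanishing hypothesis $M_m^{a,b}=0$ for $b<0$) is indispensable — an infinite direct sum would generally not commute with the inverse limit.
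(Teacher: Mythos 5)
Your proof is correct and follows essentially the same approach as the paper: both use the free $\Hh^{*,*}(S)$-module basis $\{v^i, v^iu\}$ of $\Hh^{*,*}_{\mu_p}(S)$, observe that the vanishing hypothesis in negative weights makes the bidegree $(a,b)$ component a \emph{finite} direct sum/product, and then commute that finite sum past the componentwise-computed graded inverse limit. One small bookkeeping slip: since $u \in \Hh^{1,1}_{\mu_p}(S)$ has weight $1$, the class $v^iu$ has bidegree $(2i+1,i+1)$ rather than $(2i+1,i)$, so the second sum is indexed by $N^{a-2i-1,\,b-i-1}$ for $0 \le i \le b-1$; this does not affect the finiteness or the rest of the argument.
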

\begin{proof}
Let $N$ be a $\Zz^2$-graded left $\Hh^{*,*}(S)$-module such that $N^{i,j}=0$ for $j<0$. This is in particular the case for $N=M_m$, or $N = \lim_m M_m$. Let $(a,b) \in \Zz^2$, and let us write
\[
N\{a,b\} = \Big(\prod_{s=0}^b N^{a-2s,b-s}\Big) \times \Big(\prod_{s=0}^{b-1} N^{a-2s-1,b-s-1}\Big).
\]
Then the morphism
\[
N\{a,b\} \to (\Hh^{*,*}_{\mu_p}(S) \otimes_{\Hh^{*,*}(S)} N)^{a,b}
\]
given by
\[
(x_0,\dots,x_b),(y_0,\dots,y_{b-1}) \mapsto \sum_{s=0}^b v^s \otimes x_s + \sum_{s=0}^{b-1} v^su \otimes y_s
\]
is bijective by the assumption on $N$ (and by \rref{p:H_mup_S}). Thus in the commutative diagram
\[ \xymatrix{
\Big(\Hh^{*,*}_{\mu_p}(S) \otimes_{\Hh^{*,*}(S)}(\lim_m M_m)\Big)^{a,b} \ar[r]  &  \Big(\lim_m (\Hh^{*,*}_{\mu_p}(S)\otimes_{\Hh^{*,*}(S)} M_m)\Big)^{a,b} \\
& \lim_m \Big((\Hh^{*,*}_{\mu_p}(S)\otimes_{\Hh^{*,*}(S)} M_m)^{a,b}\Big) \ar@{=}[u]\\
(\lim_m M_m)\{a,b\} \ar[uu]\ar[r] & \lim_m(M_m\{a,b\}) \ar[u]
}\]
(see \rref{p:supported} for the vertical equality), the vertical arrows are isomorphisms of abelian groups . So is the lower horizontal arrow, as limits commute with products. Therefore the upper horizontal arrow is an isomorphism of abelian groups, as required.
\end{proof}

\begin{lemma}
\label{lemm:Smith}
Let $D$ be a group scheme over $S$, and consider the group scheme $G=\mu_p \times_S D$. Let $X \in \Sm^G_S$, and assume that $\mu_p$ acts trivially on $X$. Then the morphism
\[
\Hh^{*,*}_{\mu_p}(S) \otimes_{\Hh^{*,*}(S)} \Hh^{*,*}_D(X) \to \Hh^{*,*}_G(X)
\]
is bijective.
\end{lemma}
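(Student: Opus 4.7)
The plan is to compute both sides directly via the Borel construction using a product model for $EG$, reducing the statement to the K\"unneth-type identification of \rref{p:H_mu_trivial} combined with the limit--tensor comparison \rref{lemm:lim_prod}.

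By \rref{p:model_prod} I may take as a model for $EG$ the product $E_m G = E_m\mu_p \times_S E_m D$. Since $\mu_p$ acts trivially on both $X$ (by hypothesis) and $E_m D$, the $G$-quotient decomposes as
\[
X_m = (X \times_S E_m G)/G = B_m\mu_p \times_S X_{m,D},
\]
where $X_{m,D} := (X \times_S E_m D)/D \in \Sm_S$. Hence $\Hh^{*,*}_G(X) = \lim_m \Hh^{*,*}(B_m\mu_p \times_S X_{m,D})$. Each $X_{m,D}$ is a smooth $S$-scheme carrying trivial $\mu_p$-action, so \rref{p:H_mu_trivial} gives
\[
\Hh^{*,*}_{\mu_p}(X_{m,D}) \simeq \Hh^{*,*}_{\mu_p}(S) \otimes_{\Hh^{*,*}(S)} \Hh^{*,*}(X_{m,D}).
\]
Since motivic cohomology of smooth $S$-schemes vanishes in negative weights, the inverse system $M_m := \Hh^{*,*}(X_{m,D})$ fulfils the hypothesis of \rref{lemm:lim_prod}, and passing to the limit in $m$ yields
\[
\Hh^{*,*}_{\mu_p}(S) \otimes_{\Hh^{*,*}(S)} \Hh^{*,*}_D(X) \simeq \lim_m \Hh^{*,*}_{\mu_p}(X_{m,D}) = \lim_m \lim_n \Hh^{*,*}(X_{m,D} \times_S B_n\mu_p).
\]

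It remains to identify this double limit with $\Hh^{*,*}_G(X)$. In each fixed bidegree $(a,b)$, the $\Zz^2$-graded limits collapse to ordinary inverse limits of abelian groups. Fubini expresses the iterated limit as a total limit over $\Nn \times \Nn$ (with the product order), and the diagonal $\{(m,m) : m \in \Nn\}$ is cofinal in $\Nn \times \Nn$, so the total limit equals the diagonal limit $\lim_m \Hh^{a,b}(X_{m,D} \times_S B_m\mu_p) = \Hh^{a,b}_G(X)$. Assembling across all bidegrees gives the desired isomorphism.

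The main expected obstacle is verifying that the composite isomorphism constructed above coincides with the natural morphism of the statement, given by the cup product of the pullbacks along the projections $X_m \to B_m\mu_p$ and $X_m \to X_{m,D}$. This should reduce to tracing the naturality of \rref{p:H_mu_trivial}, of \rref{lemm:lim_prod}, and of the cofinality/Fubini identification, each of which is induced by canonical geometric projections.
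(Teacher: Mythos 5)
Your proof is correct and follows essentially the same route as the paper's: same product model for $E_mG$ via \rref{p:model_prod}, same decomposition $X_m \simeq B_m\mu_p \times_S X_{m,D}$, the same two key lemmas \rref{p:H_mu_trivial} and \rref{lemm:lim_prod}, and the same cofinality-of-the-diagonal observation to relate the single limit $\lim_m$ to the double limit $\lim_m\lim_{m'}$. The only difference is cosmetic: the paper begins from $\Hh^{*,*}_G(X)$ and inserts the inner limit first, while you begin from the tensor product side and perform the cofinality step at the end.
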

\begin{proof}
Recall from \rref{p:model_prod} that we may take $E_m G = E_m \mu_p \times_S E_mD$. Then, computing the limits in the category of $\Zz^2$-graded abelian groups:
\begin{align}
\label{eq:HG_mu_p_D:1}
\begin{split}
\Hh^{*,*}_G(X) 
&= \lim_m \Hh^{*,*}((E_mG \times_S X)/G)  \\ 
&= \lim_m \Hh^{*,*}((E_m \mu_p)/\mu_p \times_S (E_m D \times_S X)/D)\\
&= \lim_m \lim_{m'} \Hh^{*,*}((E_{m'}\mu_p)/\mu_p \times_S (E_m D \times_S X)/D)\\
&= \lim_m \Hh^{*,*}_{\mu_p}((E_m D \times_S X)/D).
\end{split}
\end{align}
Note that, for each $m \in \Nn$, the $S$-scheme $(E_m D \times_S X)/D$ is smooth \cite[Tag~\href{https://stacks.math.columbia.edu/tag/05B5}{05B5}]{stacks}, hence by \rref{p:H_mu_trivial} the morphism
\[
\Hh^{*,*}_{\mu_p}(S) \otimes_{\Hh^{*,*}(S)} \Hh^{*,*}((E_m D \times_S X)/D) \to \Hh^{*,*}_{\mu_p}((E_m D \times_S X)/D)
\]
is an isomorphism. Passing to the limit, it follows that the morphism
\begin{equation}
\label{eq:HG_mu_p_D:3}
\lim_m \Hh^{*,*}_{\mu_p}(S) \otimes_{\Hh^{*,*}(S)} \Hh^{*,*}((E_m D \times_S X)/D) \to \lim_m \Hh^{*,*}_{\mu_p}((E_m D \times_S X)/D)
\end{equation}
is an isomorphism. So is the morphism
\begin{equation}
\label{eq:HG_mu_p_D:2}
\Hh^{*,*}_{\mu_p}(S) \otimes_{\Hh^{*,*}(S)} \Hh^{*,*}_D(X) \to \lim_m \Hh^{*,*}_{\mu_p}(S) \otimes_{\Hh^{*,*}(S)} \Hh^{*,*}((E_m D \times X)/D)
\end{equation}
by \rref{lemm:lim_prod}, in view of the vanishing of motivic cohomology in negative weights. We conclude by combining the isomorphisms \eqref{eq:HG_mu_p_D:2} and \eqref{eq:HG_mu_p_D:3} with the computation \eqref{eq:HG_mu_p_D:1}.
\end{proof}

\begin{lemma}
\label{lemm:Smith:2}
Let $D$ be a group scheme over $S$, and let $C=(\mu_p)^r$ for some $r \in \Nn$. Consider the group scheme $G=C \times_S D$. Let $X \in \Sm^G_S$, and assume that $C$ acts trivially on $X$. Then the morphism
\[
\Hh^{*,*}_C(S) \otimes_{\Hh^{*,*}(S)} \Hh^{*,*}_D(X) \to \Hh^{*,*}_G(X)
\]
is bijective.
\end{lemma}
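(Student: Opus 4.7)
The plan is to proceed by induction on $r$, bootstrapping from the $r=1$ case already proved in \rref{lemm:Smith}. The case $r=0$ is vacuous (both sides are $\Hh^{*,*}_D(X)$), and the case $r=1$ is exactly \rref{lemm:Smith}.

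For the inductive step, assume the statement for some $r \geq 1$, and set $C=(\mu_p)^{r+1}$. Write $C = \mu_p \times_S C'$ with $C' = (\mu_p)^r$, so that $G = \mu_p \times_S D'$ with $D' = C' \times_S D$. Since $C$ acts trivially on $X$, both $\mu_p$ and $C'$ act trivially on $X$. I would then assemble the desired isomorphism as a chain of three applications of the already-established results:
\begin{enumerate}[(1)]
\item Apply \rref{lemm:Smith} with ambient group $C'$, base scheme $S$, and $X$ replaced by $S$ (trivial action everywhere), to obtain
\[
\Hh^{*,*}_C(S) \simeq \Hh^{*,*}_{\mu_p}(S) \otimes_{\Hh^{*,*}(S)} \Hh^{*,*}_{C'}(S).
\]
\item Apply the inductive hypothesis to $C'$ and $D$ (with our given $X$), to obtain
\[
\Hh^{*,*}_{C'}(S) \otimes_{\Hh^{*,*}(S)} \Hh^{*,*}_D(X) \simeq \Hh^{*,*}_{D'}(X).
\]
\item Apply \rref{lemm:Smith} with $\mu_p$ as the extra factor and $D'$ in place of $D$ (noting that $\mu_p$ acts trivially on $X$), to obtain
\[
\Hh^{*,*}_{\mu_p}(S) \otimes_{\Hh^{*,*}(S)} \Hh^{*,*}_{D'}(X) \simeq \Hh^{*,*}_G(X).
\]
\end{enumerate}
Tensoring the identity in (1) on the right by $\Hh^{*,*}_D(X)$ over $\Hh^{*,*}(S)$, then using (2) on the inner two factors, and finally (3) on the resulting expression, yields an isomorphism
\[
\Hh^{*,*}_C(S) \otimes_{\Hh^{*,*}(S)} \Hh^{*,*}_D(X) \xrightarrow{\sim} \Hh^{*,*}_G(X).
\]

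The only subtle point is to verify that the composite of these three isomorphisms agrees with the natural morphism stated in the lemma. This reduces to checking, at each step, that the isomorphism provided by \rref{lemm:Smith} (resp.\ by the inductive hypothesis) is the canonical morphism coming from the ring morphisms $\Hh^{*,*}_H(S) \to \Hh^{*,*}_{H \times_S H'}(S)$ and $\Hh^{*,*}_{H'}(X) \to \Hh^{*,*}_{H \times_S H'}(X)$ induced on Borel constructions by the projections of group schemes and by the choice $E_m(H\times_S H') = E_m H \times_S E_m H'$ of approximating spaces (see \rref{p:model_prod}). This compatibility is built into the proof of \rref{lemm:Smith} through the identification of limits in \eqref{eq:HG_mu_p_D:1}, and poses no real obstacle. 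The main conceptual content of the argument lies entirely in \rref{lemm:Smith}; the present lemma is a purely formal iteration.
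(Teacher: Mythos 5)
Your argument is correct and follows essentially the same strategy as the paper: induction on $r$ via the decomposition $C \simeq \mu_p \times_S C'$, with two applications of Lemma~\ref{lemm:Smith} and one of the inductive hypothesis. The paper organizes the three isomorphisms into a commutative square rather than a chain (applying the inductive hypothesis to the pair $(C', \mu_p \times_S D)$ instead of $(C', D)$), but that is merely a reassociation of the same tensor factors and conveniently absorbs the compatibility check you flag at the end.
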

\begin{proof}
We proceed by induction on $r$, the case $r=0$ being clear. Assume that $r>0$, and let $C'=(\mu_p)^{r-1}$. Consider the commutative diagram
\[ \xymatrix{
\Hh^{*,*}_{C'}(S) \otimes_{\Hh^{*,*}(S)} \Hh^{*,*}_{\mu_p}(S) \otimes_{\Hh^{*,*}(S)}\Hh^{*,*}_D(X)\ar[r] \ar[d] & \Hh^{*,*}_{C'}(S) \otimes_{\Hh^{*,*}(S)} \Hh^{*,*}_{\mu_p \times_S D}(X) \ar[d] \\ 
\Hh^{*,*}_C(S) \otimes_{\Hh^{*,*}(S)} \Hh^{*,*}_D(X) \ar[r] & \Hh^{*,*}_G(X)
}\]
The upper horizontal and the left vertical arrows are isomorphisms by \rref{lemm:Smith}. The right vertical arrow is an isomorphism by induction. Therefore the lower horizontal arrow is an isomorphism, as required.
\end{proof}

\begin{proposition}
\label{prop:Smith}
Let $G \simeq (\mu_p)^n$ for some $n \in \Nn$, and let $C$ be a subgroup scheme of $G$. Let $X \in \Sm^G_S$, and assume that $C$ acts trivially on $X$. Then the set $\eu_C \subset \Hh^{*,*}_G(S)$ (see \rref{p:def_eu_C}) consists of nonzerodivisors in $\Hh^{*,*}_G(X)$, and we have
\[
\Hh^{*,*}_G(X)=\Uns(\Hh^{*,*}_G(X)[\eu_C^{-1}]).
\]
\end{proposition}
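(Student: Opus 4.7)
The plan is to first reduce to $G = C \times D$ with $C \cong (\mu_p)^r$: every closed subgroup of $G \cong (\mu_p)^n$ is a direct factor, since via Cartier duality its dual is a quotient of the finite $\Fp$-vector space $(\Zz/p)^n$, hence a direct summand. By \rref{lemm:Smith:2} this identifies $M := \Hh^{*,*}_G(X)$ with $\Hh^{*,*}_C(S) \otimes_{\Hh^{*,*}(S)} \Hh^{*,*}_D(X)$ as $\Ac$-algebras. I then prove both assertions by induction on $r$; the base case $r = 0$ is immediate since $\eu_C = \{1\}$ and $\Hh^{*,*}_G(X)$ is unstable (the bound $\St^i|_{\Hh^{a,b}} = 0$ for $2i > a$ survives the inverse limit defining the Borel theory).

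For the inductive step, the workhorse is the following change-of-basis trick: given a character $\chi$ of $G$ with $\chi|_C \neq 0$, pick a rank-one direct factor $Z \cong \mu_p$ of $C$ with $\chi|_Z \neq 0$ and a complement $H \supseteq \ker \chi$ of $Z$ in $G$. Under the resulting decomposition $G = Z \times H$ (for which $\chi|_H = 0$), $e(\chi) \in M$ coincides with the canonical generator $v \in \Hh^{*,*}_Z(S)$. Since $Z \subset C$ acts trivially on $X$, \rref{lemm:Smith} yields $M = \Hh^{*,*}_Z(S) \otimes_{\Hh^{*,*}(S)} \Hh^{*,*}_H(X)$, and \rref{lemm:unstable_mup} then furnishes both that $e(\chi)$ is a nonzerodivisor in $M$ and that $M = \Uns(M[e(\chi)^{-1}])$. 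Since the finitely many such $e(\chi)$ multiplicatively generate $\eu_C$ and products of nonzerodivisors are nonzerodivisors, the first assertion follows.

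For the equality $M = \Uns(M[\eu_C^{-1}])$, I combine the above with the inductive hypothesis applied to $C' := C \cap H$ (of rank $r-1$, acting trivially on $X$ inside $H \cong (\mu_p)^{n-1}$), giving $R := \Hh^{*,*}_H(X) = \Uns(R[\eu_{C'}^{-1}])$. The central technical step is a strengthening of \rref{lemm:unstable_mup}: for an $\Ac$-algebra morphism $\Hh^{*,*}(S) \to R$ sending $v$ to the center and a multiplicative set $T \subset R$ of central nonzerodivisors with $R = \Uns(R[T^{-1}])$, one has
\[
\Hh^{*,*}_{\mu_p}(S) \otimes_{\Hh^{*,*}(S)} R = \Uns\bigl((\Hh^{*,*}_{\mu_p}(S) \otimes_{\Hh^{*,*}(S)} R)[v^{-1}, T^{-1}]\bigr).
\]
This is obtained by mimicking the proof of \rref{lemm:unstable_mup} using the decomposition $\bigoplus_j v^j R \oplus \bigoplus_j v^j u R$ and \rref{lemm:P_v_-1}; the hypothesis on $T$ is precisely what allows the ``highest coefficient vanishes'' step to promote the $R[T^{-1}]$-valued coefficients back into $R$. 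One applies this with $T = \eu_{C'}$ and checks that $M[\eu_C^{-1}]$ is captured by $M[v^{-1}, \eu_{C'}^{-1}]$: the mixed generators $av + w$ of $\eu_C$ with $a \in \Fp^\times$ and $w \in \Hh^{*,*}_H(S)$ must be shown to become units in this combined localization, using a nilpotence argument controlled by the Steenrod action on $w/v$ via \rref{lemm:P_v_-1}.

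The principal obstacle is establishing the strengthened \rref{lemm:unstable_mup} and carrying out the final comparison of localizations; the bookkeeping of $v$-adic expansions in the presence of $T$-denominators is intricate but parallels the original argument, and the crux is to exhibit the required nilpotence of $w/v$ (or equivalently, to enlarge $T$ to absorb the mixed generators while preserving the assumption $R = \Uns(R[T^{-1}])$), so that the induction closes.
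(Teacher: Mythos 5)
Your proof of the first assertion (that $\eu_C$ consists of nonzerodivisors) is essentially correct and matches the paper's method: you establish the single-character result $\Hh^{*,*}_G(X) = \Uns(\Hh^{*,*}_G(X)[e(\chi)^{-1}])$ via \rref{lemm:Smith} and \rref{lemm:unstable_mup} and then take products, which is exactly what the paper does.

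The argument for the second assertion, $\Hh^{*,*}_G(X) = \Uns(\Hh^{*,*}_G(X)[\eu_C^{-1}])$, has a genuine gap at precisely the point you flag as ``the crux''. The mixed generators $av + w$ of $\eu_C$ do \emph{not} become units in $M[v^{-1}, \eu_{C'}^{-1}]$: after inverting $v$, the invertibility of $av + w$ is equivalent to that of $1 + a^{-1}(w/v)$, and $w/v$ is not nilpotent in general. Concretely, take $G = C = (\mu_p)^2$ and $X = S$, so that $\Fp[v_1, v_2]$ sits inside $\Hh^{*,*}_G(S)$; with $v = v_1$ and $C' = \mu_p$ generated by $v_2$, the element $v_1 + v_2 = v_1(1 + v_2/v_1)$ is not a unit in $\Fp[v_1^{\pm 1}, v_2^{\pm 1}]$, because $1 + v_2/v_1$ is not. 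So $M[\eu_C^{-1}]$ is a strictly larger localization than $M[v^{-1}, \eu_{C'}^{-1}]$, and the nilpotence argument cannot be made to work. The fallback of ``enlarging $T$'' is also blocked structurally: the offending elements $av+w$ lie in $M = \Hh^{*,*}_Z(S) \otimes_{\Hh^{*,*}(S)} R$ rather than in $R = \Hh^{*,*}_H(X)$, so they cannot be placed in a multiplicative set $T \subset R$ as your strengthened lemma requires.

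The paper closes the argument by an entirely different and much lighter device, avoiding any comparison of $M[\eu_C^{-1}]$ with a smaller localization. Having proved $\Hh^{*,*}_G(X) = \Uns(\Hh^{*,*}_G(X)[e(L)^{-1}])$ for \emph{each single} line bundle $L$ with $L^C = 0$ (your version of this step is fine), one takes an unstable $y \in \Hh^{*,*}_G(X)[\eu_C^{-1}]$, chooses $L_1, \dots, L_s$ with $e(L_1)\cdots e(L_s) y \in \Hh^{*,*}_G(X)$, and shows by \emph{descending} induction on $r$ that $e(L_1)\cdots e(L_r) y \in \Hh^{*,*}_G(X)$. Indeed, setting $y' = e(L_1)\cdots e(L_r) y$, the inductive step gives $e(L_{r+1}) y' \in \Hh^{*,*}_G(X)$, so $y' \in \Hh^{*,*}_G(X)[e(L_{r+1})^{-1}]$; the nonzerodivisor property ensures the localization maps are injective and compatible with the Steenrod action, so $y'$ is unstable in $\Hh^{*,*}_G(X)[e(L_{r+1})^{-1}]$, and the single-$L$ result forces $y' \in \Hh^{*,*}_G(X)$. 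Taking $r=0$ concludes. This peels off one Euler class at a time and needs no strengthened form of \rref{lemm:unstable_mup} and no understanding of the full localization at $\eu_C$. You should replace your second half with this descending induction.
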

\begin{proof}
Since $S$ is a field, each element of $\Vc_C$ (see \rref{def:V_C}) splits as a direct sum of $G$-equivariant line bundles $L$ over $S$ such that $L^C=0$. In addition any such $L$ is trivial as a line bundle over $S$.

Now, let $L$ be a $G$-equivariant line bundle over $S$ such that $L^C=0$. Then there exists an embedding $\mu_p \subset C$ such that the $\mu_p$-action on $L$ corresponds to the canonical character. We may find an $S$-group scheme $D$ and an isomorphism $G \simeq \mu_p \times_S D$ which is compatible with the embedding $\mu_p \subset C \subset G$. The morphism $\Hh^{*,*}_{\mu_p}(S) \otimes_{\Hh^{*,*}(S)} \Hh^{*,*}_D(X) \to \Hh^{*,*}_G(X)$ maps $v \otimes 1$ to $\pi_X^*e(L)$, and is an isomorphism by \rref{lemm:Smith}. It thus follows from \rref{lemm:unstable_mup} that the element $e(L)$ is a nonzerodivisor in $\Hh^{*,*}_G(X)$, and that we have
\begin{equation}
\label{eq:e(L)_unstable}
\Hh^{*,*}_G(X) = \Uns(\Hh^{*,*}_G(X)[e(L)^{-1}]).
\end{equation}
Since every element of $\eu_C$ is a product of classes $e(L)$ for $L$ as above, we obtain in particular the first statement.

Next let $y \in \Uns(\Hh^{*,*}_G(X)[\eu_C^{-1}])$. Then there exist $G$-equivariant line bundles $L_1,\dots,L_s$ over $S$ satisfying $L_i^C=0$ for $i=1,\dots,s$, and such that $e(L_1)\cdots e(L_s) y \in \Hh^{*,*}_G(X)$. We prove by descending induction on $r$ that $e(L_1) \cdots e(L_r) y \in \Hh^{*,*}_G(X)$. This is true if $r=s$, so let us assume that $r<s$. The element $y'=e(L_1) \cdots e(L_r) y \in \Hh^{*,*}_G(X)[\eu_C^{-1}]$ is unstable by \rref{p:Uns_subring} (and \rref{p:Steenrod_operations}), and we have $e(L_{r+1})y' \in \Hh^{*,*}_G(X)$ by induction. It follows from \rref{eq:e(L)_unstable} applied with $L=L_{r+1}$ that $y' \in \Hh^{*,*}_G(X)$, concluding the inductive proof. The case $r=0$ then yields the last statement of the proposition.
\end{proof}

In the next theorem, we use the forgetful morphism described in \rref{p:forget_action}.

\begin{theorem}
Let $G \simeq (\mu_p)^n$ for some $n \in \Nn$, and let $C$ be a subgroup scheme of $G$. Let $X \in \Sm^G_S$ and denote by $i \colon X^C \to X$ the closed immersion. Then the pullback $i^* \colon \Hh^{*,*}_G(X) \to \Hh^{*,*}_G(X^C)$ induces an isomorphism $\Uns(\Hh^{*,*}_G(X)[\eu_C^{-1}]) \simeq \Hh^{*,*}_G(X^C)$. Moreover we have an isomorphism
\[
\Hh^{*,*}_{G/C}(X^C) \simeq \Hh^{*,*}(S) \otimes_{\Hh^{*,*}_C(S)} \Uns(\Hh^{*,*}_G(X)[\eu_C^{-1}]).
\]
\end{theorem}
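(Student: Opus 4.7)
The plan is to deduce the first isomorphism by combining the concentration theorem (Proposition \ref{prop:conc_Borel-type}) with the Smith-type identification of Proposition \ref{prop:Smith}. First, apply concentration to obtain a ring isomorphism $i^* \colon \Hh^{*,*}_G(X)[\eu_C^{-1}] \xrightarrow{\sim} \Hh^{*,*}_G(X^C)[\eu_C^{-1}]$. Since $i^*$ commutes with Steenrod operations by naturality, and these operations extend to the localization by central elements as in \rref{p:A_loc}, this is an isomorphism of $\Ac$-algebras. Then apply the functor $\Uns$ to both sides, yielding $\Uns(\Hh^{*,*}_G(X)[\eu_C^{-1}]) \simeq \Uns(\Hh^{*,*}_G(X^C)[\eu_C^{-1}])$; the right-hand side becomes $\Hh^{*,*}_G(X^C)$ by Proposition \ref{prop:Smith}, applied to $X^C$, which carries a trivial $C$-action by construction of the fixed locus.

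For the second isomorphism, the key ingredient is a splitting of $G$. Since $G \simeq (\mu_p)^n$ is diagonalizable, Cartier duality converts the inclusion $C \subset G$ into a surjection of character groups $\widehat{G} \to \widehat{C}$ of $\Fp$-vector spaces, which splits. This yields a closed subgroup scheme $D \subset G$ with $G = C \times_S D$, such that the composite $D \hookrightarrow G \twoheadrightarrow G/C$ is an isomorphism. Since the $G$-action on $X^C$ factors through $G/C$, one identifies $\Hh^{*,*}_{G/C}(X^C) \simeq \Hh^{*,*}_D(X^C)$. Then Lemma \ref{lemm:Smith:2} applied to $X^C$ provides
\[
\Hh^{*,*}_C(S) \otimes_{\Hh^{*,*}(S)} \Hh^{*,*}_D(X^C) \xrightarrow{\sim} \Hh^{*,*}_G(X^C).
\]
Extending scalars along the augmentation $\Hh^{*,*}_C(S) \to \Hh^{*,*}(S)$ of \rref{p:forget_action} and invoking the first isomorphism, associativity of tensor products produces the chain
\begin{align*}
\Hh^{*,*}(S) \otimes_{\Hh^{*,*}_C(S)} \Uns(\Hh^{*,*}_G(X)[\eu_C^{-1}])
&\simeq \Hh^{*,*}(S) \otimes_{\Hh^{*,*}_C(S)} \Hh^{*,*}_G(X^C) \\
&\simeq \Hh^{*,*}_D(X^C) \simeq \Hh^{*,*}_{G/C}(X^C).
\end{align*}

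The main obstacle will be keeping careful track of the various $\Hh^{*,*}_C(S)$-module structures. The action on $\Uns(\Hh^{*,*}_G(X)[\eu_C^{-1}])$ will be induced by the ring morphism $\Hh^{*,*}_C(S) \to \Hh^{*,*}_G(S)$ coming from the projection $G \to C$ afforded by the splitting, composed with the pullback to $X$ and the localization map (which preserves the unstable part because both are $\Ac$-morphisms by \rref{p:A_loc}, and $\Hh^{*,*}_G(X)$ is itself unstable by \rref{p:Steenrod_operations}). Once this compatibility is settled, the argument becomes essentially formal, the substantive input being entirely packaged into Propositions \ref{prop:conc_Borel-type} and \ref{prop:Smith} and Lemma \ref{lemm:Smith:2}.
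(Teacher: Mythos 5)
Your proposal is correct and follows essentially the same route as the paper's proof: the concentration theorem gives the $\Ac$-compatible isomorphism of localizations, Proposition~\ref{prop:Smith} (applied to $X^C$, where the trivial $C$-action and smoothness from~\rref{p:smooth_fixed_locus} are both needed) identifies the unstable part of the target, and Lemma~\ref{lemm:Smith:2} together with the splitting $G \simeq C \times_S D$ handles the second isomorphism. You merely make explicit two things the paper leaves implicit: the Cartier-duality reason why the splitting $D$ exists, and the precise $\Hh^{*,*}_C(S)$-module structure via the projection coming from that splitting.
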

\begin{proof}
The group schemes $G$ and $C$ verify the conditions of \rref{p:G_conditions} (being diagonalizable). By the concentration theorem \rref{prop:conc_Borel-type}, the map $i^*$ induces an isomorphism $\Hh^{*,*}_G(X)[\eu_C^{-1}] \xrightarrow{\sim} \Hh^{*,*}_G(X^C)[\eu_C^{-1}]$, which is compatible with the $\Ac$-module structures. The first statement then follows from \rref{prop:Smith} (applied to the scheme $X^C$, which is smooth over $S$ by \rref{p:smooth_fixed_locus}). The inclusion $C \subset G$ extends to an isomorphism $G \simeq C \times_S D$ for some subgroup scheme $D \subset G$, which is isomorphic to $G/C$. As $C \simeq (\mu_p)^r$ for some $r \in \Nn$, it follows from \rref{lemm:Smith:2} that
\[
\Hh^{*,*}_D(X^C) \simeq \Hh^{*,*}(S) \otimes_{\Hh^{*,*}_C(S)} \Hh^{*,*}_C(S) \otimes_{\Hh^{*,*}(S)} \Hh^{*,*}_D(X^C) \simeq \Hh^{*,*}(S) \otimes_{\Hh^{*,*}_C(S)} \Hh^{*,*}_G(X^C),
\]
which yields the last statement.
\end{proof}

Taking $C=G$, we obtain:
\begin{corollary}
\label{cor:Smith}
Let $G=(\mu_p)^n$ for some $n \in \Nn$, and $X \in \Sm^G_S$. Then we have a natural isomorphism
\[
\Hh^{*,*}(X^G) \simeq \Hh^{*,*}(S) \otimes_{\Hh^{*,*}_G(S)} \Uns(\Hh^{*,*}_G(X)[\eu_G^{-1}]).
\]
\end{corollary}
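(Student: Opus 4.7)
The plan is exactly what the sentence immediately preceding the corollary suggests: specialize the preceding theorem to the case $C = G$. With this choice, $X^C = X^G$, the multiplicative set $\eu_C$ becomes $\eu_G$, and the quotient group scheme $G/C$ is the trivial $S$-group $1_S$. Substituting these into the second displayed isomorphism of the preceding theorem yields
\[
\Hh^{*,*}_{1_S}(X^G) \simeq \Hh^{*,*}(S) \otimes_{\Hh^{*,*}_G(S)} \Uns(\Hh^{*,*}_G(X)[\eu_G^{-1}]),
\]
so the corollary reduces to the identification of the left-hand side with $\Hh^{*,*}(X^G)$.

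This identification is immediate from the Borel construction recalled in \rref{p:Borel}: for the trivial group, in view of the model-independence statement in \rref{p:model}, I may pick any valid system $(V_m,U_m)$; the simplest admissible choice is to take $V_m = U_m = S$ for every $m$ (the complement is empty, hence of vacuously sufficient codimension). Then $(X^G)_m = X^G$ for all $m$, the inverse system defining $\Hh^{*,*}_{1_S}(X^G)$ is constant with value $\Hh^{*,*}(X^G)$, and its limit in $\Zz^2$-graded groups is therefore $\Hh^{*,*}(X^G)$. Naturality in $X$ is inherited from the naturality of the pullback $i^*$ appearing in the preceding theorem.

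There is no substantive obstacle; the corollary is a direct one-line specialization of the preceding theorem. The only minor bookkeeping consists in verifying that Borel-type cohomology for the trivial group $1_S$ reduces to ordinary motivic cohomology, which as above is formal once a legitimate constant model is chosen and \rref{p:model} is invoked to guarantee independence of this choice.
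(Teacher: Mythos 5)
Your proposal is correct and matches the paper's own (one-line) proof: the corollary is exactly the specialization $C=G$ of the preceding theorem. Your explicit check that Borel-type cohomology for the trivial group reduces to ordinary motivic cohomology, via the constant admissible model $V_m=U_m=S$ together with the model-independence of \rref{p:model}, is a correct way to fill in the small point the paper leaves implicit (and which it also uses tacitly as the base case of the induction in \rref{lemm:Smith:2}).
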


\begin{example}
Let $G \simeq (\mu_p)^n$ for some $n \in \Nn$, and $X \in \Sm^G_S$. Assume that $X$ has the $G$-equivariant cohomology of a $G$-equivariant vector bundle, by which we mean that the pullback $\Hh^{*,*}_G(S) \to \Hh^{*,*}_G(X)$ is an isomorphism. Then we claim that $X^G$ has the $G$-equivariant cohomology of the point, that is $\Hh^{*,*}(S) \to \Hh^{*,*}(X^G)$ is an isomorphism; in particular $X^G$ is geometrically connected. Indeed by \rref{cor:Smith} we have $\Hh^{*,*}(X^G) \simeq \Hh^{*,*}(S) \otimes_{\Hh^{*,*}_G(S)} \Uns(\Hh^{*,*}_G(S)[\eu_G^{-1}])$, while taking $C=G$ in \rref{prop:Smith} we have $\Hh^{*,*}_G(S) = \Uns(\Hh^{*,*}_G(S)[\eu_C^{-1}])$.
\end{example}

\newcommand{\etalchar}[1]{$^{#1}$}
\def\cprime{$'$}

\end{document}